\newtheorem{theorem}{Theorem}[section]
\newtheorem{definition}[theorem]{Definition}
\newtheorem{conjecture}[theorem]{Conjecture}
\newtheorem{proposition}[theorem]{Proposition}
\newtheorem{problem}[theorem]{Problem}
\newtheorem{lemma}[theorem]{Lemma}
\begin{document}

\title{Representations of quantum permutation algebras}

\author{Teodor Banica}
\address{T.B.: Department of Mathematics, Toulouse 3 University, 118 route de Narbonne, 31062 Toulouse, France. {\tt banica@math.ups-tlse.fr}}

\author{Julien Bichon}
\address{J.B.: Department of Mathematics, Clermont-Ferrand 2 University, Campus des Cezeaux, 63177 Aubiere Cedex, France. {\tt bichon@math.univ-bpclermont.fr}}

\author{Jean-Marc Schlenker}
\address{J.-M.S.: Department of Mathematics, Toulouse 3 University, 118 route de Narbonne, 31062 Toulouse, France. {\tt schlenker@math.ups-tlse.fr}}

\subjclass[2000]{46L65 (05B20, 46L37)}
\keywords{Quantum permutation, Hadamard matrix}
\thanks{The work of T.B. and J.B. was supported by the ANR grant ``Galoisint''}

\begin{abstract}
We develop a combinatorial approach to the quantum permutation algebras, as Hopf images of representations of type $\pi:A_s(n)\to B(H)$. We discuss several general problems, including the commutativity and cocommutativity ones, the existence of tensor product or free wreath product decompositions, and the Tannakian aspects of the construction. The main motivation comes from the quantum invariants of the complex Hadamard matrices: we show here that, under suitable regularity assumptions, the computations can be performed up to $n=6$.
\end{abstract}

\maketitle

\section*{Introduction}

The free analogue of the symmetric group $S_n$ was constructed by Wang in \cite{wan}. The idea is that when regarding $S_n$ as a complex algebraic group, the $n\times n$ matrix formed by the standard coordinates $u_{ij}:S_n\to\mathbb C$ is magic, in the sense that all its entries are projections, which sum up to 1 on each row and each column. So, Wang considers then the universal algebra $A_s(n)$ generated by the entries of an abstract $n\times n$ magic matrix. This is a Hopf algebra in the sense of Woronowicz \cite{wo1}, so its spectrum $S_n^+$ is a compact quantum group, called quantum permutation group.

The very first question is whether the ``quantum permutations'' do exist or not. That is, we would like to know whether $S_n^+$ is indeed bigger that $S_n$, and if so, how big is it. Or, in other words, if $A_s(n)$ is bigger than $C(S_n)$, and if so, how big is it.

The answer to these basic questions is as follows:
\begin{enumerate}
\item At $n\leq 3$ we have $S_n^+=S_n$. This is because the entries of such a $n\times n$ magic matrix can be shown to pairwise commute, so we have $A_s(n)=C(S_n)$.

\item At $n=4$ we have $S_4^+=SO_3^{-1}$. This is a quite subtle result, the quantum group $S_4^+$ being in fact the central object of the whole theory. See \cite{ba2}, \cite{bb2}, \cite{bc2}.

\item At $n\geq 5$ the situation is even worse: the dual of $S_n^+$ is not amenable, and there is indication from \cite{vve} that its reduced group algebra should be simple.
\end{enumerate}

The world of quantum permutation groups, i.e. quantum subgroups of $S_n^+$, turns to be extremely rich. For instance it was shown in \cite{ba2} that these quantum groups are in correspondence with the subalgebras of Jones' spin planar algebra \cite{jo2}. Another key result in this sense is the one in \cite{bb2}, where a complete classification is obtained at $n=4$. The computation of integrals over the quantum permutation groups gives rise to a subtle problematics, of theoretical physics flavor \cite{bc1}, \cite{bc2}. Some new connections with noncommutative geometry and with free probability were found in \cite{bgs}, \cite{ksp}.

An important class of examples, which actually motivated the whole theory, comes from the complex Hadamard matrices. These are the $n\times n$ matrices formed by complex numbers of modulus 1, whose rows are pairwise orthogonal.

The point is that each Hadamard matrix $h\in M_n(\mathbb C)$ produces a quantum permutation algebra, i.e. a quotient $A_s(n)\to A$, according to the following algorithm:
\begin{enumerate}
\item We know that the rows $h_i\in\mathbb C^n$ are pairwise orthogonal.

\item Thus the vectors $\xi_{ij}=h_i/h_j$ form a magic basis of $\mathbb C^n$.

\item This gives a representation $\pi:A_s(n)\to M_n(\mathbb C)$.

\item We call $A$ the Hopf image of this representation.
\end{enumerate}

The basic example comes from the Fourier matrix, $F_{ij}=w^{(i-1)(j-1)}$ with $w=e^{2\pi i/n}$. All the above objects are ``circulant'', and we end up with the algebra $A=C(\mathbb Z_n)$.

The above construction has been known for about 10 years, since \cite{ba1}. Its basic properties were worked out in the recent paper \cite{bni}. The notion of Hopf image was systematically investigated in the preprint \cite{bb3}. The reasons for this delayed development is the difficulty in producing non-trivial statements on the subject.

In fact, the various problems regarding the complex Hadamard matrices (classification, computation of invariants) are all reputed to be quite difficult, with the tools basically lacking. The philosophy is somehow that ``the Fourier matrix corresponds to the known mathematics, and the other matrices correspond to unknown mathematics''. Illustrating here is the classification work of Haagerup \cite{ha1}, the work on invariants by Jones \cite{jo2}, as well as a counterexample constructed by Tao in \cite{tao}.

Let us mention for instance that one particularly difficult problem, well-known to specialists, is the computation of the quantum invariants of the following $7\times 7$ matrix based on the root of unity $w=e^{2\pi i/6}$, discovered by Petrescu in \cite{pet}:
$$P^q
=\begin{pmatrix}
1&1&1&1&1&1&1\\
1&qw&qw^4&w^5&w^3&w^3&w\\
1&qw^4&qw&w^3&w^5&w^3&w\\
1&w^5&w^3&\bar{q}w&\bar{q}w^4&w&w^3\\
1&w^3&w^5&\bar{q}w^4&\bar{q}w&w&w^3\\
1&w^3&w^3&w&w&w^4&w^5\\
1&w&w&w^3&w^3&w^5&w^4
\end{pmatrix}$$

The purpose of the present paper is to develop a systematic study of the representations of type $\pi:A_s(n)\to B(H)$, where $H$ is a Hilbert space. Besides the above-mentioned Hadamard matrix motivation, we have as well an abstract motivation: any quantum permutation algebra appears as Hopf image of such a representation.

So, let us consider a representation of type $\pi:A_s(n)\to B(H)$, and let $A$ be its Hopf image. We have the following list of basic questions:
\begin{enumerate}
\item When is $A$ commutative?

\item When is $A$ cocommutative?

\item Do we have $A=A'\otimes A''$?

\item Do we have $A=A'*_wA''$?
\end{enumerate}

We will discuss all these questions, with a particular attention to the case $H=\mathbb C^n$, which includes the Hadamard matrix situation. We will discuss as well the classification problem for $\pi$ and the explicit computation of $A$, for small values of $n$.

Our study will lead naturally to a certain hierarchy for the related combinatorial objects associated to Hilbert spaces. In decreasing order of generality, these are:
\begin{center}
\begin{tabular}[t]{|l|l|l|l|l|}
\hline Object&Classification&Hopf algebra computation\\
\hline
\hline Magic decompositions&$n\leq 3$ done, $n=4$ difficult&$n\leq 3$ done, $n=4$ difficult\\
\hline Magic bases&$n\leq 3$ done, $n=4$ possible&$n\leq 3$ done, $n=4$ possible\\
\hline Hadamard matrices&$n\leq 5$ done, $n=6$ difficult&$n\leq 5$ done, $n=6$ difficult\\
\hline Regular Hadamard&$n\leq 6$ done, $n=7$ possible&$n\leq 5$ done, $n=6$ possible\\
\hline
\end{tabular}
\end{center}
\medskip

The precise content of this table will be explained in the body of the paper.
 
The above hierarchy is quite natural, with the study of the regular matrices being related to some key problems. In fact, our main results concern precisely the regular matrices: at $n=6$ we already have a quite satisfactory picture, and at $n=7$, which would be the next step, we have the above-mentioned Petrescu matrix.  

Let us also mention that another motivation for the study of the regular matrices, and of their one-parameter deformations over the unit circle, would be the development of an abstract theory of ``quantum permutation groups at roots of unity''. Observe that, unlike for the quantized enveloping algebras of Drinfeld \cite{dri} and Jimbo \cite{jim}, in our case the square of the antipode is always the identity: $S^2=id$.

The paper is organized as follows. In 1 we recall the construction of the Wang algebra, in 2-4 we discuss the general properties of its Hilbert space representations, and in 5-6 we focus on the representations coming from complex Hadamard matrices. In 7-10 we present a number of technical results regarding the Hadamard matrices of small order, and in 11 we derive from this study several classification results.

The final section, 12, contains a few concluding remarks.

\subsection*{Acknowledgements}

This paper was written over a long period of time, and has benefited from several discussions with our colleagues, which succesively reshaped the organization and goals of the manuscript. We would like in particular to thank R. Burstein, B. Collins, V. Guedj, V. Jones, R. Nicoara, F. Sz\"oll\H{o}si and S. Vaes.

\section{Quantum permutations}

Let $A$ be a $C^*$-algebra. That is, we have a complex algebra with a norm and an involution, such that the Cauchy sequences converge, and $||aa^*||=||a||^2$.

The basic example is $B(H)$, the algebra of bounded operators on a Hilbert space $H$. In fact, any $C^*$-algebra appears as closed subalgebra of some $B(H)$.

The key example is $C(X)$, the algebra of continuous functions on a compact space $X$. By a theorem of Gelfand, any commutative $C^*$-algebra is of the form $C(X)$.

There are several ways of passing from commutative $C^*$-algebras to noncommutative ones. In this paper we use an approach based on the notion of projection.

\begin{definition}
Let $A$ be a $C^*$-algebra.
\begin{enumerate}
\item A projection is an element $p\in A$ satisfying $p^2=p=p^*$.

\item Two projections $p,q\in A$ are called orthogonal when $pq=0$. 

\item A partition of unity is a set of orthogonal projections, which sum up to $1$.
\end{enumerate}
\end{definition}

In the case of the above two basic examples, these notions are as follows.

A projection in $B(H)$ is an orthogonal projection $P_K$, where $K\subset H$ is a closed subspace. The orthogonality of projections corresponds to the orthogonality of subspaces, and the partitions of unity correspond to the orthogonal decompositions of $H$.

A projection in $C(X)$ is a characteristic function $\chi_Y$, where $Y\subset X$ is an open and closed subset. The orthogonality of projections corresponds to the disjointness of subsets, and the partitions of unity correspond to the partitions of $X$.

The following key definition is due to Wang \cite{wan}.

\begin{definition}
A magic unitary over a $C^*$-algebra $A$ is a square matrix of projections $u\in M_n(A)$, all whose rows and columns are partitions of the unity.
\end{definition}

In the case of the above two basic examples, the situation is as follows.

A magic unitary over $B(H)$ is of the form $P_{K_{ij}}$, with $K$ magic decomposition of $H$, in the sense that all rows and columns of $K$ are orthogonal decompositions of $H$.

A magic unitary over $C(X)$ is of the form $\chi_{Y_{ij}}$, with $Y$ magic partition of $X$, in the sense that all rows and columns of $Y$ are partitions of $X$.

Consider now the situation $G\curvearrowright X$ where a finite group acts on a finite set. The sets $G_{ij}=\left\{\sigma\in G\mid \sigma(j)=i\right\}$ form a magic partition of $G$, so the corresponding characteristic functions form a magic unitary over the algebra $A=C(G)$.

\begin{definition}
The matrix of characteristic functions
$$\chi_{ij}=\chi\left\{\sigma\in G\mid \sigma(j)=i\right\}$$
is called magic unitary associated to $G\curvearrowright X$.
\end{definition}

The interest in $\chi$ is that it encodes the dual structural maps of $G\curvearrowright X$. Consider indeed the multiplication, unit, inverse and action map:
\begin{eqnarray*}
m(\sigma,\tau)&=&\sigma\tau\\
u(\cdot)&=&1\\
i(\sigma)&=&\sigma^{-1}\\
a(i,\sigma)&=&\sigma(i)
\end{eqnarray*}

The duals of these maps are called comultiplication, counit, antipode and coaction. They are given by the following well-known formulae, see \cite{abe}:
\begin{eqnarray*}
\Delta(f)&=&(\sigma,\tau)\to f(\sigma\tau)\\
\varepsilon(f)&=&f(1)\\
S(f)&=&\sigma\to f(\sigma^{-1})\\
\alpha(f)&=&(i,\sigma)\to f(\sigma(i))
\end{eqnarray*}

These latter maps can all be expressed in terms of $\chi$, and in the particular case of $G=S_n$ acting on $X_n=\{1,\ldots ,n\}$, we have the following presentation result.

\begin{theorem}
$C(S_n)$ is the universal commutative $C^*$-algebra generated by $n^2$ elements $\chi_{ij}$, with relations making $(\chi_{ij})$ a magic unitary matrix. The maps
\begin{eqnarray*}
\Delta(\chi_{ij})&=&\sum\chi_{ik}\otimes \chi_{kj}\\
\varepsilon(\chi_{ij})&=&\delta_{ij}\\
S(\chi_{ij})&=&\chi_{ji}\\
\alpha(\delta_i)&=&\sum\delta_j\otimes \chi_{ji}
\end{eqnarray*}
are the comultiplication, counit, antipode and coaction of $C(S_n)\curvearrowright C(X_n)$.
\end{theorem}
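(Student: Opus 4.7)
The statement has two distinct parts: the universal presentation of $C(S_n)$, and the identification of the Hopf algebra structural maps. I would handle them in that order.

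For the presentation, my first step is to check that the matrix $(\chi_{ij})$ defined by $\chi_{ij}=\chi\{\sigma\in S_n\mid \sigma(j)=i\}$ is a magic unitary in $C(S_n)$: each $\chi_{ij}$ is a projection since it is a characteristic function, and the orthogonality plus row/column sum conditions reduce to the obvious set-theoretic fact that for fixed $i$ the sets $\{\sigma(j)=i\}$ partition $S_n$ as $j$ varies (a permutation has a unique preimage of $i$), and similarly for fixed $j$. Next I would verify that these elements generate $C(S_n)$: they separate points of $S_n$ (given $\sigma\neq\tau$, pick $j$ with $\sigma(j)\neq\tau(j)$; then $\chi_{\sigma(j),j}$ distinguishes them), so by Stone--Weierstrass they span a dense $*$-subalgebra, which is therefore all of $C(S_n)$.

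The core of the argument is the universality statement. Let $B$ be a commutative unital $C^*$-algebra equipped with a magic unitary $(p_{ij})\in M_n(B)$. By Gelfand, $B=C(Y)$ for some compact Hausdorff $Y$. For each $y\in Y$, the scalars $p_{ij}(y)\in\{0,1\}$ (since each $p_{ij}$ is a projection in $C(Y)$) form a $\{0,1\}$-valued magic matrix, i.e.\ a permutation matrix. This defines a map $\varphi:Y\to S_n$, continuous because the $p_{ij}$ are, and the pullback $\varphi^*:C(S_n)\to C(Y)=B$ sends $\chi_{ij}\mapsto p_{ij}$. Uniqueness of such a $*$-homomorphism follows from the fact, established above, that the $\chi_{ij}$ generate $C(S_n)$. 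This is the step I expect to require the most care, but nothing harder than careful bookkeeping with Gelfand duality.

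For the second part I would just dualize each structural map and evaluate. For the comultiplication:
\[
\Delta(\chi_{ij})(\sigma,\tau)=\chi_{ij}(\sigma\tau)=[(\sigma\tau)(j)=i]=\sum_k[\tau(j)=k][\sigma(k)=i]=\sum_k\chi_{ik}(\sigma)\chi_{kj}(\tau),
\]
which matches the stated formula. The counit reads $\varepsilon(\chi_{ij})=\chi_{ij}(1)=[1(j)=i]=\delta_{ij}$. The antipode reads $S(\chi_{ij})(\sigma)=\chi_{ij}(\sigma^{-1})=[\sigma^{-1}(j)=i]=[\sigma(i)=j]=\chi_{ji}(\sigma)$. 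Finally, for the coaction $\alpha(f)(i,\sigma)=f(\sigma(i))$, one evaluates $\alpha(\delta_i)$ at $(k,\sigma)$ to obtain $\delta_i(\sigma(k))=[\sigma(k)=i]$, and rewriting this as a sum over the basis of $C(X_n)\otimes C(S_n)$ gives exactly the claimed formula. Each verification is a one-line set-theoretic identity; the only subtlety is keeping track of index conventions.
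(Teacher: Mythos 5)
Your proof is correct, and it departs from the paper's argument in the crucial universality step. The paper proceeds top-down: it lets $A$ denote the universal object (existence taken from abstract nonsense), uses Stone--Weierstrass to obtain a surjection $A\to C(S_n)$, then observes that the universal property forces $A$ to carry the structure of a commutative Hopf $C^*$-algebra coacting faithfully on $X_n$, whence its spectrum is a subgroup of $S_n$ and dualizing gives the reverse arrow $C(S_n)\to A$. You instead verify the universal property directly: given any commutative unital $C^*$-algebra $B$ with a magic unitary $(p_{ij})$, you apply Gelfand to write $B=C(Y)$, observe that evaluating at each $y\in Y$ produces a $\{0,1\}$-valued magic matrix, i.e.\ a permutation matrix, and thereby build a continuous map $\varphi:Y\to S_n$ whose pullback is the required $*$-homomorphism $\chi_{ij}\mapsto p_{ij}$. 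Your route is more elementary and more self-contained: it does not rely on the (non-trivial, and in this paper unproved) fact that a commutative Hopf $C^*$-algebra coacting faithfully on an $n$-point set has spectrum a subgroup of $S_n$ -- indeed, the Gelfand-duality computation you give is essentially what one would need to justify that fact in the first place. The paper's route is shorter on the page and aligns better with the Hopf-algebraic and quantum-group machinery used throughout the rest of the paper, but your version is the cleaner proof from scratch. The verification of the four structural maps is the same direct dualization in both.
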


\begin{proof}
Let $A$ be the universal algebra in the statement. The Stone-Weierstrass theorem shows that the entries of the magic unitary associated to $S_n\curvearrowright X_n$ generate the algebra $C(S_n)$, so we have a surjective morphism of algebras $A\to C(S_n)$.

It follows from the universal property of $A$ that the maps $\Delta,\varepsilon,S,\alpha$ as in the statement exist. Thus $A$ is a Hopf $C^*$-algebra coacting faithfully on $X_n$, so its spectrum is a subgroup of $S_n$, and by dualizing we obtain the missing arrow $C(S_n)\to A$.
\end{proof}

We can proceed now with liberation. The idea is to remove commutativity from the above considerations. The following key definition is due to Wang \cite{wan}.

\begin{definition}
$A_s(n)$ is the universal $C^*$-algebra generated by $n^2$ elements $u_{ij}$, with relations making $(u_{ij})$ a magic unitary matrix. The maps
\begin{eqnarray*}
\Delta(u_{ij})&=&\sum u_{ik}\otimes u_{kj}\\
\varepsilon(u_{ij})&=&\delta_{ij}\\
S(u_{ij})&=&u_{ji}\\
\alpha(\delta_i)&=&\sum \delta_j\otimes u_{ji}
\end{eqnarray*}
are the comultiplication, counit, antipode and coaction of $A_s(n)\curvearrowright C(X_n)$.
\end{definition}

The algebra $A_s(n)$ is a Hopf $C^*$-algebra in the sense of Woronowicz \cite{wo1}. Its spectrum $S_n^+$ is a compact quantum group, called quantum permutation group on $n$ points.

\begin{theorem}
The algebras $A_s(n)$ are as follows:
\begin{enumerate}
\item For $n\leq 3$, the canonical map $A_s(n)\to C(S_n)$ is an isomorphism.

\item For $n\geq 4$, $A_s(n)$ is not commutative, and infinite dimensional.
\end{enumerate}
\end{theorem}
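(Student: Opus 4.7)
The plan is to reduce both parts to statements about an individual magic unitary. For part~(1), Theorem~1.4 identifies $A_s(n)\to C(S_n)$ with the abelianization map, so it suffices to show that the entries of any magic unitary with $n\leq 3$ pairwise commute. For part~(2), it suffices to exhibit, for each $n\geq 4$, a magic unitary living in a non-commutative and infinite-dimensional $C^*$-algebra.

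For part~(1), the cases $n=1$ (the unique generator equals $1$) and $n=2$ (the row and column sums force every entry to be $p$ or $1-p$ for a single projection $p$) are immediate. For $n=3$, write $u_{ij}$ for the generators. Entries in a common row or column multiply to zero, hence trivially commute. For an off-diagonal pair such as $u_{11}$ and $u_{22}$, the plan runs in three steps. First, insert $1=\sum_k u_{ik}$ (a row sum) or $1=\sum_k u_{kj}$ (a column sum) on one side of a single entry and cancel the terms killed by row/column orthogonality; this yields basic identities such as
$$u_{11}u_{22}=u_{33}u_{22},\qquad u_{22}u_{11}=u_{22}u_{33},\qquad u_{11}u_{33}=u_{11}u_{22}u_{33}.$$
Second, combining the first and third rewrites $u_{11}u_{33}$ as $u_{33}u_{22}u_{33}$, which is manifestly self-adjoint; comparing with $(u_{11}u_{33})^*=u_{33}u_{11}$ forces $[u_{11},u_{33}]=0$. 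Third, expanding $(u_{11}u_{22})^2=u_{11}(u_{22}u_{11})u_{22}$ via the second identity, then collapsing $u_{11}u_{22}u_{33}$ by the third, and finally using step two together with the first, produces $(u_{11}u_{22})^2=u_{11}u_{22}$. Since any idempotent of norm at most $1$ in a $C^*$-algebra is automatically self-adjoint, it follows that $u_{11}u_{22}=(u_{11}u_{22})^*=u_{22}u_{11}$. Relabelling rows and columns handles every other pair, so $A_s(3)$ is commutative, and hence equals $C(S_3)$ by Theorem~1.4.

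For part~(2), fix an infinite-dimensional $C^*$-algebra $B$ containing two projections $p,q$ with $pq\neq qp$; the universal unital $C^*$-algebra generated by two projections is one standard choice. The $4\times 4$ block-diagonal matrix whose two $2\times 2$ diagonal blocks are the magic matrices built from $p$ and from $q$ respectively is itself magic, so by universality of $A_s(4)$ it defines a $*$-homomorphism $A_s(4)\to B$ whose image contains $p$ and $q$, hence is non-commutative and infinite-dimensional. For $n\geq 5$, extend this magic unitary to size $n$ by placing $1$ at each diagonal position $(k,k)$ with $k\geq 5$ (and $0$ elsewhere in those rows and columns); the same surjection $A_s(n)\to B$ then exists.

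The only non-formal point is the $n=3$ commutativity argument, and within it the main obstacle is finding a combination of the basic magic identities that genuinely closes up to $(u_{11}u_{22})^2=u_{11}u_{22}$: many naive manipulations of the commutator $[u_{11},u_{22}]$ only restate it in terms of equivalent commutators, so some experimentation is needed to locate the three identities above and chain them in the right order. Once idempotency is in hand, the standard $C^*$-algebra fact that non-zero idempotents of norm at most $1$ are projections converts it directly into self-adjointness and hence commutativity, and part~(2) reduces to the block construction.
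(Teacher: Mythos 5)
Your proof is correct and self-contained, whereas the paper simply defers both statements to Wang's original work. The argument you give is essentially the standard one to which the paper is pointing: for $n=3$, chaining the magic-unitary identities to show that $u_{11}u_{22}$ is an idempotent of norm at most one, hence a projection and therefore self-adjoint, yielding commutativity; and for $n\geq 4$, the block-diagonal magic unitary giving a surjection of $A_s(n)$ onto the universal unital $C^*$-algebra on two projections, which is $C^*(\mathbb Z_2 * \mathbb Z_2)$, infinite-dimensional and noncommutative.
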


\begin{proof}
This follows from the fact that the entries of a $n\times n$ magic unitary with $n\leq 3$ have to commute with each other, while at $n\geq 4$ these don't necessarily commute with each other, and can generate an infinite dimensional algebra. See Wang \cite{wan}.
\end{proof}

In terms of quantum groups, for $n\leq 3$ the canonical inclusion $S_n\subset S_n^+$ is an isomorphism, while for $n\geq 4$ the quantum group $S_n^+$ is not classical, nor finite. 

We are now in position of introducing the arbitrary quantum permutation algebras. These are by definition the Hopf algebra quotients of $A_s(n)$.

\begin{definition}
A quantum permutation algebra is a $C^*$-algebra $A$, given with a magic unitary matrix $u\in M_n(A)$, subject to the following conditions:
\begin{enumerate}
\item The elements $u_{ij}$ generate $A$.

\item $\Delta(u_{ij})=\sum u_{ik}\otimes u_{kj}$ defines a morphism $\Delta:A\to A\otimes A$. 

\item $\varepsilon(u_{ij})=\delta_{ij}$ defines a morphism $\varepsilon:A\to \mathbb C$.

\item $S(u_{ij})=u_{ji}$ defines a morphism $S:A\to A^{op}$.
\end{enumerate}
\end{definition}

In what follows, all the quantum permutation algebras will be supposed to be full. This is a technical assumption, not changing the level of generality, stating that $A$ must be the enveloping algebra of the $*$-algebra generated by the elements $u_{ij}$.

If $(A,u)$ and $(B,v)$ are quantum permutation algebras, so are $A\otimes B$ and $A*B$, both taken with the magic unitary $w=diag(u,v)$. See Wang \cite{wan}.

The free wreath product of $(A,u)$ and $(B,v)$ is given by:
$$A*_wB=(A^{*\dim (v)}*B)/<[u_{ij}^{(a)},v_{ab}]=0>$$

Here the exponents on the right refer to the various copies of $A$. We get in this way a quantum permutation algebra, with magic unitary $w_{ia,jb}=u_{ij}^{(a)}v_{ab}$. See \cite{bi1}.

\begin{theorem}
The commutative and cocommutative cases are as follows:
\begin{enumerate}
\item If $G\subset S_n$ is a subgroup then $C(G)$ is a quantum permutation algebra. Any commutative quantum permutation algebra is of this form.

\item If $\mathbb Z_{i_1}*\ldots*\mathbb Z_{i_k}\to\Gamma$ is a quotient group then $C^*(\Gamma)$ is a quantum permutation algebra. Any cocommutative quantum permutation algebra is of this form.
\end{enumerate}
\end{theorem}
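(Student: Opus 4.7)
The plan is to handle each direction in each part separately, with the forward directions being constructive and the converse directions using structure theorems for commutative and cocommutative Hopf algebras.

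For (1) forward, an inclusion $G\subset S_n$ dualizes to a surjection $C(S_n)\twoheadrightarrow C(G)$ of Hopf algebras; composing with the canonical $A_s(n)\twoheadrightarrow C(S_n)$ of Theorem 1.4 exhibits $C(G)$ as a quantum permutation algebra with magic unitary $(\chi_{ij}|_G)$. For (1) converse, if $A$ is commutative then by Gelfand $A=C(X)$ for some compact space, and the surjection $A_s(n)\twoheadrightarrow A$ factors through the maximal commutative quotient of $A_s(n)$, which by Theorem 1.4 is precisely $C(S_n)$; dualizing the resulting $C(S_n)\twoheadrightarrow A$ then gives $X=G\subset S_n$ and $A=C(G)$.

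For (2) forward, I would use an explicit Fourier construction. For a cyclic group $\mathbb Z_m=\langle g\mid g^m=1\rangle$ with $w=e^{2\pi i/m}$, define $v\in M_m(C^*(\mathbb Z_m))$ by $v_{ij}=\frac{1}{m}\sum_{l=0}^{m-1}w^{(i-j)l}g^l$. A direct computation using the character orthogonality $\sum_k w^{k(l-l')}=m\delta_{l,l'}$ verifies $v_{ij}^2=v_{ij}=v_{ij}^*$, that rows and columns sum to $1$, and that $\Delta(v_{ij})=\sum_k v_{ik}\otimes v_{kj}$; inverse Fourier recovers each $g^l$ from the $v_{ij}$, so $v$ generates $C^*(\mathbb Z_m)$. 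For $\Gamma=\mathbb Z_{i_1}*\cdots *\mathbb Z_{i_k}$ take the block-diagonal magic unitary $u=\mathrm{diag}(v^{(1)},\ldots,v^{(k)})\in M_N(C^*(\Gamma))$ with $N=i_1+\cdots+i_k$, whose entries generate $C^*(\Gamma)$; any quotient $\Gamma\twoheadrightarrow\Gamma'$ then yields $C^*(\Gamma')$ as a quantum permutation algebra via the induced map.

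For (2) converse, the plan is as follows. Given $A=C^*(\Gamma)$ with magic unitary $u\in M_n(A)$, every finite-dimensional unitary corepresentation of $C^*(\Gamma)$ decomposes into group-like (i.e.\ $1$-dimensional) pieces, so there exist a scalar unitary $W\in M_n(\mathbb C)$ and elements $g_1,\ldots,g_n\in\Gamma$ with $u=W^*\,\mathrm{diag}(g_1,\ldots,g_n)\,W$; inverting expresses each $g_k$ as a linear combination of the $u_{ij}$, so the $g_k$'s generate $\Gamma$. The main obstacle is to show each $g_k$ has finite order. I expect the key observation to be that the diagonal entry $u_{ii}=\sum_k|W_{ki}|^2 g_k$ is a projection whose coefficients in the group basis are nonnegative and sum to $1$; by the Kawada--Ito idempotent theorem (an idempotent probability measure on a discrete group is the Haar measure of a finite subgroup), its support must be a finite subgroup $H_i\leq\Gamma$. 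Since for every $k$ we have $|W_{ki}|^2>0$ for at least one $i$ (because $\sum_i|W_{ki}|^2=1$), each $g_k$ lies in some $H_i$ and hence has finite order, say $m_k$, yielding the desired surjection $\mathbb Z_{m_1}*\cdots*\mathbb Z_{m_n}\twoheadrightarrow\Gamma$.
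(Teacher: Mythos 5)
Your argument is correct, and for part (1) and the forward direction of (2) it coincides with the paper's: Gelfand duality for the commutative converse, and for the cocommutative forward case the isomorphism $C^*(\mathbb Z_{i_1}*\cdots*\mathbb Z_{i_k})\simeq C(\mathbb Z_{i_1})*\cdots*C(\mathbb Z_{i_k})$ together with the remark, just before Theorem 1.8, that a free product of quantum permutation algebras is again one with magic unitary $\mathrm{diag}(u,v)$ --- your explicit Fourier magic unitary $v_{ij}=\frac{1}{m}\sum_l w^{(i-j)l}g^l$ is precisely this isomorphism written out. The genuine point of divergence is the converse of (2): the paper does not prove it but cites \cite{bi2}, whereas you sketch an argument. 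Your route is sound and is essentially Bichon's: decompose the $n$-dimensional unitary corepresentation $u$ of the cocommutative $C^*(\Gamma)$ into group-likes as $u=W^*\,\mathrm{diag}(g_1,\ldots,g_n)\,W$ with $W\in U_n(\mathbb C)$, note that each diagonal projection $u_{ii}=\sum_k|W_{ki}|^2 g_k$ is a nonzero finitely supported idempotent probability measure on $\Gamma$ (nonzero because group elements are linearly independent in $C^*(\Gamma)$ and the coefficients sum to $1$), invoke the Kawada--Ito/Wendel theorem to see its support is a finite subgroup, and then use the unitarity of $W$ (each column $k$ has $|W_{ki}|^2>0$ for some $i$) to place each $g_k$ in a finite subgroup. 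One small polish: Kawada--Ito is usually stated for compact groups, so you should either cite the locally compact/discrete version or note that for a finitely supported idempotent probability measure the conclusion is elementary --- the support is a finite subsemigroup of a group, hence a subgroup, and idempotence then forces uniform weights. None of this affects correctness.
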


\begin{proof}
(1) The first assertion follows from the general considerations in the beginning of this section. The second assertion follows from the Gelfand theorem.

(2) The first assertion follows from the above considerations. Indeed, we have:
$$C^*(\mathbb Z_{i_1}*\ldots*\mathbb Z_{i_k})
\simeq C(\mathbb Z_{i_1})*\ldots*C(\mathbb Z_{i_k})$$

This shows that the algebra on the left is a quantum permutation one, and the same must hold for its quotient $C^*(\Gamma)$. For the second assertion, see \cite{bi2}.
\end{proof}

\section{Hopf images}

In this section we present a purely combinatorial approach to the quantum permutation algebras, in terms of the geometry of subspaces of a given Hilbert space. 

The starting point is the following fundamental result of Gelfand, which was actually at the origins of the whole $C^*$-algebra theory.

\begin{theorem}
Let $\Gamma$ be a discrete group, and $H$ be a Hilbert space. We have a one-to-one correspondence between:
\begin{enumerate}
\item Unitary representations $u:\Gamma\to U(H)$.

\item Representations $\pi:C^*(\Gamma)\to B(H)$.
\end{enumerate}
\end{theorem}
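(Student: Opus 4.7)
The plan is to use the fact that $C^*(\Gamma)$ is by construction the enveloping $C^*$-algebra of the group $*$-algebra $\mathbb C[\Gamma]$, so that the required bijection is just the universal property of this completion, unpacked in the two directions.

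First I would set up the forward map. Given a unitary representation $u:\Gamma\to U(H)$, extend it linearly to a $*$-algebra morphism $\tilde u:\mathbb C[\Gamma]\to B(H)$ by $\tilde u(\sum c_g\,g)=\sum c_g\,u(g)$; the unitarity of each $u(g)$ together with $u(g^{-1})=u(g)^*$ ensures this is a $*$-morphism, and on each element one has $\|\tilde u(x)\|\leq \sum|c_g|$, so $\tilde u$ is bounded with respect to the universal $C^*$-norm $\|x\|=\sup_\sigma\|\sigma(x)\|$ (sup over $*$-representations of $\mathbb C[\Gamma]$) defining $C^*(\Gamma)$. By density, $\tilde u$ extends uniquely to a $*$-representation $\pi:C^*(\Gamma)\to B(H)$.

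Next I would construct the inverse map. Given $\pi:C^*(\Gamma)\to B(H)$, set $u(g)=\pi(g)$, where $g\in\Gamma$ is identified with its canonical image in $C^*(\Gamma)$. Inside $C^*(\Gamma)$ one has $g\,g^*=g\,g^{-1}=e=1$ and $g^*\,g=1$, so $g$ is a unitary, hence $u(g)\in U(H)$. The relation $u(gh)=\pi(gh)=\pi(g)\pi(h)=u(g)u(h)$ is immediate, so $u:\Gamma\to U(H)$ is a unitary representation.

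Finally I would check that these two assignments are mutually inverse. Starting from $u$, passing to $\pi$, then restricting to the generators $g\in\Gamma\subset C^*(\Gamma)$ obviously gives back $u$. Conversely, starting from $\pi$ and rebuilding $\tilde\pi$ from $u(g)=\pi(g)$ via linear extension and completion, one obtains a $*$-morphism that agrees with $\pi$ on the dense $*$-subalgebra $\mathbb C[\Gamma]$, hence on all of $C^*(\Gamma)$ by continuity. There is no real obstacle here; the only point requiring care is the boundedness argument that lets $\tilde u$ descend to the enveloping $C^*$-algebra, and this is handled by the very definition of the universal norm on $C^*(\Gamma)$.
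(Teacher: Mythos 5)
Your proof is correct and follows essentially the same route as the paper: extend a unitary representation linearly to $\mathbb C[\Gamma]$ and then by the universal property/continuity to $C^*(\Gamma)$, and conversely restrict a $*$-representation of $C^*(\Gamma)$ to the canonical unitaries $g\in\Gamma$. The only cosmetic point is that the $\ell^1$ estimate $\|\tilde u(x)\|\le\sum|c_g|$ is not itself what gives boundedness for the universal norm; rather, $\tilde u$ is one of the $*$-representations $\sigma$ appearing in the supremum defining $\|\cdot\|_{C^*(\Gamma)}$, so $\|\tilde u(x)\|\le\|x\|$ tautologically (the $\ell^1$ bound is what guarantees that supremum is finite).
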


\begin{proof}
Any unitary representation of $\Gamma$ can be extended by linearity to the group algebra $\mathbb C[\Gamma]$, then by continuity to the whole algebra $C^*(\Gamma)$.

Conversely, consider a $C^*$-algebra representation $\pi:C^*(\Gamma)\to B(H)$. The group elements $g\in C^*(\Gamma)$ being unitaries in the abstract sense, their images by $\pi$ must be certain unitaries $u_g\in B(H)$, and this gives the result.
\end{proof}

The above considerations suggest the following definition. 

\begin{definition}
Let $\pi:C^*(\Gamma)\to B(H)$ be a representation.
\begin{enumerate}
\item $\pi$ is called inner faithful if $g\neq h$ implies $\pi(g)\neq\pi(h)$.

\item The Hopf image of $\pi$ is $A_\pi=C^*(\Gamma')$, where $\Gamma'=\pi(\Gamma)$.
\end{enumerate}
\end{definition}

Observe that any faithful representation is inner faithful. The converse is far from being true. For instance in the case $H=\mathbb C^n$, the finite dimensional algebra $M_n(\mathbb C)$ is the target of many inner faithful representations coming from infinite dimensional algebras of type $C^*(\Gamma)$, one for each discrete subgroup $\Gamma\subset U_n$.

We have the following key statement, which provides an abstract characterization for both notions of Hopf image, and inner faithful representation.

\begin{proposition}
Let $\pi:C^*(\Gamma)\to B(H)$ be a representation.
\begin{enumerate}
\item $A_\pi$ is the smallest group algebra realizing a factorization of $\pi$.

\item $\pi$ is inner faithful iff $A=A_\pi$.
\end{enumerate}
\end{proposition}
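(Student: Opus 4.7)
The plan is to unpack Definition 2.2 and reduce both statements to elementary group-theoretic observations about $\pi|_\Gamma:\Gamma\to U(H)$, then lift everything via the universal property of the full group $C^*$-algebra (Theorem 2.1).

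Set $N=\ker(\pi|_\Gamma)$, a normal subgroup of $\Gamma$, and observe that $\Gamma'=\pi(\Gamma)\cong \Gamma/N$. For (1), I would first exhibit the canonical factorization $\pi:C^*(\Gamma)\to A_\pi\to B(H)$ by extending the group quotient $\Gamma\to\Gamma/N\cong\Gamma'$ to a surjection $C^*(\Gamma)\to C^*(\Gamma')=A_\pi$, and extending the tautological inclusion $\Gamma'\hookrightarrow U(H)$ to a representation $A_\pi\to B(H)$; these two maps agree with $\pi$ on the generators $g\in\Gamma$, hence compose to $\pi$ on the whole of $C^*(\Gamma)$ by continuity. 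For minimality, I would take an arbitrary factorization of $\pi$ through a group algebra $C^*(\Lambda)$, with the first arrow arising from a group morphism $\phi:\Gamma\to\Lambda$ and the second from a unitary representation $\rho:\Lambda\to U(H)$, and focus on the image subgroup $\phi(\Gamma)\subset\Lambda$. The identity $\rho\circ\phi=\pi|_\Gamma$ forces $\ker\phi\subseteq N$, so the first isomorphism theorem delivers a surjection $\phi(\Gamma)\twoheadrightarrow\Gamma/N=\Gamma'$; passing to group algebras, this yields a surjection $C^*(\phi(\Gamma))\twoheadrightarrow A_\pi$, which exhibits $A_\pi$ as a quotient of the relevant piece of $C^*(\Lambda)$ and gives the desired minimality.

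For (2), unwinding the definitions shows that $\pi$ is inner faithful iff $\pi|_\Gamma$ is injective iff $N=\{1\}$ iff the canonical surjection $C^*(\Gamma)\to C^*(\Gamma/N)=A_\pi$ is an isomorphism. Reading ``$A$'' in the statement as $C^*(\Gamma)$, namely the source of $\pi$, this is precisely the claimed equivalence.

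The main subtlety lies in (1), in pinning down the correct notion of a ``group algebra realizing a factorization'': one should insist that the intermediate morphism $C^*(\Gamma)\to C^*(\Lambda)$ come from a genuine group morphism $\Gamma\to\Lambda$, since arbitrary $*$-homomorphisms between full group $C^*$-algebras need not send group-like elements to group-like elements. Once that convention is fixed, both parts reduce to the first isomorphism theorem for groups combined with the universal property of $C^*(-)$ that lets one pass freely between group morphisms and $*$-algebra morphisms.
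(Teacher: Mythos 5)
Your proof is correct and fills in exactly the two ingredients the paper invokes in its one-line proof, namely Theorem 2.1 (the bijection between unitary representations of $\Gamma$ and $*$-representations of $C^*(\Gamma)$) and the functoriality of $\Gamma\mapsto C^*(\Gamma)$, so it follows the same route. Your closing remark, that a ``group algebra factorization'' must be understood as one induced by a group morphism $\Gamma\to\Lambda$, is the right reading and mirrors the requirement, in the quantum permutation setting of Definition 2.4, that the intermediate algebra be a Hopf algebra quotient of $A_s(n)$.
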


\begin{proof}
This follows from Theorem 2.1, and from the basic functorial properties of the group algebra construction $\Gamma\to C^*(\Gamma)$.
\end{proof}

We present now an extension of these fundamental notions and results to the case of quantum permutation algebras. Let us first recall that each such algebra satisfies Woronowicz's axioms in \cite{wo1}, so we have the heuristic formula $A=C^*(\Gamma)$, where $\Gamma$ is a discrete quantum group. Thus the above notions and results can be extended, provided that we use the algebra formalism, and make no reference to the underlying discrete quantum groups, which don't exist as concrete objects.

The best is to proceed by converting Proposition 2.3 into a definition.

\begin{definition}
Let $\pi:A\to B(H)$ be a representation.

\begin{enumerate}
\item $A_\pi$ is the smallest quantum permutation algebra realizing a factorization of $\pi$.

\item $\pi$ is called inner faithful if $A=A_\pi$.
\end{enumerate}
\end{definition}

In other words, the Hopf image is the final object in the category of factorizations of $\pi$ through quantum permutation algebras. Both its existence and uniqueness follow from abstract algebra considerations. The idea is that $A_\pi$ can be constructed as being the quotient of $A$ by a suitable ideal, namely the largest Hopf ideal contained in ${\rm Ker}(\pi)$. We refer to \cite{bb3} for full details regarding this construction. 

A first point of interest in the above notions comes from the following result.

\begin{theorem}
Any quantum permutation algebra appears as Hopf image of a representation $\pi:A_s(n)\to B(H)$. Moreover, we can take $H=l^2(\mathbb N)$.
\end{theorem}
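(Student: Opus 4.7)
The plan is to realize any quantum permutation algebra $A$ as the Hopf image of an explicit representation obtained by composing the canonical surjection $A_s(n)\twoheadrightarrow A$ with a faithful Hilbert space representation of $A$. The two ingredients are the universal property of $A_s(n)$ (Definition 1.6) and the description of $A_\pi$ as the quotient of $A_s(n)$ by the largest Hopf ideal contained in $\ker(\pi)$, as recalled after Definition 2.4 and attributed to \cite{bb3}.

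First, given a quantum permutation algebra $A$ with magic unitary $u\in M_n(A)$, the universal property of $A_s(n)$ produces a surjective unital $*$-morphism $q:A_s(n)\to A$ sending the canonical generators to $u_{ij}$. Conditions (2)--(4) in Definition 1.7 force $q$ to intertwine the coproduct, counit and antipode, so $q$ is a morphism of Hopf $*$-algebras and $J:=\ker(q)$ is a Hopf $*$-ideal of $A_s(n)$. Next, since $A$ is generated as a $*$-algebra by the $n^2$ entries of $u$, the underlying dense $*$-subalgebra is countable dimensional over $\mathbb C$, hence by the fullness assumption following Definition 1.7, $A$ is a separable unital $C^*$-algebra. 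Any such algebra admits a faithful unital $*$-representation $\rho:A\to B(\ell^2(\mathbb N))$; for instance, via GNS applied to a faithful state built as a convex combination of a countable separating family of states, or by taking the countable direct sum of a faithful finite-dimensional representation when $A$ is finite dimensional.

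Now set $\pi:=\rho\circ q:A_s(n)\to B(\ell^2(\mathbb N))$. Faithfulness of $\rho$ gives $\ker(\pi)=\ker(q)=J$, and since $J$ is already a Hopf ideal it is trivially the largest Hopf ideal it contains. Invoking the description of $A_\pi$ from \cite{bb3}, this yields $A_\pi=A_s(n)/J\cong A$, handling simultaneously the main claim and the ``moreover'' part. I do not foresee a genuine obstacle: the nontrivial content is absorbed into the cited construction of the Hopf image, and the remaining steps reduce to the universal property of $A_s(n)$ and the standard fact that separable unital $C^*$-algebras embed faithfully in $B(\ell^2(\mathbb N))$. The only point requiring care is precisely the faithfulness of $\rho$, which ensures that $\ker(\pi)$ collapses exactly onto $J$ and no further, so that the Hopf image recovers $A$ itself rather than a smaller quotient.
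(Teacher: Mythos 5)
Your proof is correct and follows essentially the same route as the paper: compose the canonical surjection $A_s(n)\to A$ with a faithful representation of $A$ on a separable Hilbert space, then observe that the resulting $\pi$ has Hopf image $A$. The only cosmetic difference is that you justify minimality via the Hopf-ideal description of $A_\pi$ (noting $\ker\pi=\ker q$ is already a Hopf ideal), whereas the paper appeals directly to minimality of the factorization; these are two ways of saying the same thing.
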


\begin{proof}
This follows from the Gelfand-Naimark-Segal theorem, stating that any $C^*$-algebra has a faithful representation on a Hilbert space. Indeed, given an arbitrary quantum permutation algebra $A$, this theorem gives an embedding $j:A\subset B(H)$. 

By composing this embedding with the canonical map $p:A_s(n)\to A$, we get a representation $jp:A_s(n)\to B(H)$. Now since $A$ provides a factorization of $jp$, and is minimal with this property, we conclude that $A$ is the Hopf image of $jp$.

Finally, $A$ being separable, we can take $H$ to be separable, $H=l^2(\mathbb N)$. 
\end{proof}

The above statement reduces in principle the study of the quantum permutation algebras to that of the magic decompositions of Hilbert spaces. Indeed, the representations $\pi:A_s(n)\to B(H)$ are in one-to-one correspondence with the magic unitaries over the algebra $B(H)$, hence with the magic decompositions of $H$.

So, our starting point will be the following definition.

\begin{definition}
A magic decomposition of $H$ is a square matrix of subspaces $X$, all whose rows and columns are orthogonal decompositions of $H$. Associated to $X$ are:
\begin{enumerate}
\item The magic unitary matrix given by $P_{ij}$ =  projection on $X_{ij}$.

\item The representation $\pi:A_s(n)\to B(H)$ given by $\pi(u_{ij})=P_{ij}$. 

\item The quantum permutation algebra $A=A_\pi$ associated to $\pi$.
\end{enumerate}
\end{definition}

We begin our study with the construction of a basic example. Let $H$ be a Hilbert space, given with a decomposition into orthogonal subspaces:
$$H=\bigoplus_{k=1}^NX_k$$

Let also $(E_{ij})$ be a magic partition of the set $I=\{1,\ldots,N\}$, in the sense that all the rows and columns of $E$ are partitions of $I$. We let:
$$X^E_{ij}=\bigoplus_{k\in E_{ij}}X_k$$

It follows from definitions that $X^E$ is a magic decomposition of $H$.

For $k\in\{1,\ldots,N\}$ we denote by $\sigma_k\in S_n$ the permutation given by $\sigma_k(j)=i$ when $k\in E_{ij}$. These permutations $\sigma_1,\ldots,\sigma_N$ uniquely determine $E$. They generate a certain subgroup $G\subset S_n$, than we call group associated to $E$.

\begin{theorem}
For a magic partition decomposition $X^E$ we have $A=C(G)$, where $G\subset S_n$ is the group associated to $E$.
\end{theorem}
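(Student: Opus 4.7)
The plan is to show that $\pi$ factors through the commutative Hopf algebra $C(G)$ and that this factorization is minimal. The argument splits into three pieces: a commutativity reduction, an identification of the characters carried by the invariant subspaces $X_k$, and a minimality check via the classification of commutative quantum permutation algebras.

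First I would observe that the projections $P_{ij}$ pairwise commute: each has the form $P_{ij}=\sum_{k\in E_{ij}}P_{X_k}$, a sum of projections belonging to the single orthogonal decomposition $H=\bigoplus_k X_k$, so $P_{ij}P_{kl}=\sum_{m\in E_{ij}\cap E_{kl}}P_{X_m}=P_{kl}P_{ij}$. Hence $\pi$ annihilates every commutator $u_{ij}u_{kl}-u_{kl}u_{ij}$, and the universal property of $A_s(n)$ shows that $\pi$ factors through the abelianization $A_s(n)\twoheadrightarrow C(S_n)$. Next, each $X_k$ is invariant under every $P_{ij}$, and $P_{ij}$ acts on $X_k$ by the scalar $[k\in E_{ij}]=\delta_{i,\sigma_k(j)}$, so $\pi|_{X_k}=\epsilon_k\cdot\mathrm{id}_{X_k}$, where $\epsilon_k:A_s(n)\to\mathbb{C}$ is the character corresponding to $\sigma_k\in S_n$ under the standard identification of $S_n$ with the character set of $A_s(n)$.

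Since each $\sigma_k$ lies in $G$, the evaluation map $C(S_n)\to\prod_k\mathbb{C}\hookrightarrow B(H)$, $f\mapsto\sum_k f(\sigma_k)P_{X_k}$, factors through the restriction $C(S_n)\twoheadrightarrow C(G)$, yielding a factorization $A_s(n)\to C(G)\to B(H)$ of $\pi$ through the quantum permutation algebra $C(G)$. Thus the Hopf image $A$ is a quotient of $C(G)$ and is in particular commutative; the classification of commutative quantum permutation algebras then gives $A=C(G')$ for some subgroup $G'\subset G$. For the reverse inclusion I would observe that $\ker\pi\subset\ker\epsilon_k$ (since $\pi(x)|_{X_k}=\epsilon_k(x)\cdot\mathrm{id}$), and because $\ker(A_s(n)\to A)$ is the largest Hopf ideal inside $\ker\pi$, the character $\epsilon_k$ descends to a character of $A=C(G')$ and hence corresponds to an element of $G'$. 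That element is forced to be $\sigma_k$, so $\sigma_k\in G'$ for every $k$; as $G$ is generated by the $\sigma_k$ we conclude $G\subset G'$, giving $G=G'$.

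The main obstacle is the last step, namely extracting the characters $\epsilon_k$ from the invariant-subspace structure and verifying that they descend to $A$. This relies on the characterization of the Hopf image as the quotient of $A_s(n)$ by the largest Hopf ideal contained in $\ker\pi$, together with the identification of characters of $C(G')$ with elements of $G'$. Once these are in hand, the remainder of the proof is a direct unwinding of definitions.
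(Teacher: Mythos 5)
Your proof is correct. The first half (factoring $\pi$ through $C(G)$, hence $A$ is a commutative quotient of $C(G)$) is essentially the same as the paper's: both rest on the observation that $P_{ij}$ acts on each $X_k$ as the scalar $\delta_{i,\sigma_k(j)}$, so that $\pi$ is a direct sum of the characters $\epsilon_k=\mathrm{ev}_{\sigma_k}$. Where you diverge is in the minimality step. The paper packages this by introducing the auxiliary commutative representation $\rho:C(S_n)\to C(I)$, $\rho(\chi_{ij})=\chi_{E_{ij}}$, showing $C(G)$ is its Hopf image, and then transferring minimality along the injection $C(I)\hookrightarrow B(H)$. You instead argue pointwise: since $\ker\pi=\bigcap_k\ker\epsilon_k$, the largest Hopf ideal inside $\ker\pi$ is contained in each $\ker\epsilon_k$, so every $\epsilon_k$ descends to $A=C(G')$ and is therefore evaluation at some element of $G'$; matching on the generators $u_{ij}$ pins that element to $\sigma_k$, so $\sigma_k\in G'$ and $G\subset G'$. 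Your version buys a self-contained, element-by-element proof of minimality that avoids the detour through the Hopf image of $\rho$, and makes the role of the largest-Hopf-ideal characterization of the Hopf image completely explicit; the paper's version is shorter once one is willing to quote the functorial properties of Hopf images from \cite{bb3}. Both are valid.
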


\begin{proof}
We will use the basic properties of the Hopf image, for which we refer to \cite{bb3}.

We first review the definition of $G$. We know from Theorem 1.4 that associated to $E$ is a certain representation $\rho:C(S_n)\to C(I)$. This representation is given by $\rho(\chi_{ij})=\chi_{E_{ij}}$, so the corresponding transpose map $r:I\to S_n$ satisfies:
\begin{eqnarray*}
\chi_{ij}(r(k))
&=&\chi_{E_{ij}}(k)\\
&=&\delta_{\sigma_k(j),i}\\
&=&\chi_{ij}(\sigma_k)
\end{eqnarray*}

This gives $r(k)=\sigma_k$ for any $k$, so we can conclude that $G$ is the group generated by the image of $r$. Or, equivalently, that $C(G)$ is the Hopf image of $\rho$. 

We denote by $P_k$ the orthogonal projection onto $X_k$, and by $P_{ij}$ the orthogonal projection onto $X^E_{ij}$. We have:
$$P_{ij}=\sum_{k\in E_{ij}}P_k$$

We claim that the representation of $A_s(n)$ associated to the magic decomposition $X^E$ has a factorization of the following type:
$$\begin{matrix}
A_s(n)&&\to&&M_n(\mathbb C)\\
\ \\
\downarrow&&&&\uparrow\\
\ \\
C(S_n)&\to&C(G)&\to&C(I)
\end{matrix}$$

Indeed, we can define the arrow on the right to be the one given by $\delta_k\to P_k$, and the other 4 arrows, to be the canonical ones. At the level of generators, we have:
$$\begin{matrix}
u_{ij}&&\to&&P_{ij}\\
\ \\
\downarrow&&&&\uparrow\\
\ \\ 
\chi_{ij}&\to&\chi_{ij_{|G}}&\to&\chi_{E_{ij}}
\end{matrix}$$

Thus the above diagram of algebras commutes, as claimed. Now since $C(G)$ is a Hopf algebra, the Hopf algebra $A_\pi$ we are looking for must be a quotient of it. 

On the other hand, $A_\pi$ must be the minimal algebra containing the image of $C(S_n)$ by the bottom map, so we get $A_\pi=C(G)$ as claimed.
\end{proof}

\begin{theorem}
For a magic decomposition $X_{ij}$, the following are equivalent:
\begin{enumerate}
\item $A$ is commutative.

\item $X=X^E$ for a certain magic partition $E$.
\end{enumerate}
\end{theorem}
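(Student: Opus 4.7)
The plan is to handle the two implications separately, with direction $(2) \Rightarrow (1)$ being immediate from Theorem 2.8: if $X = X^E$ arises from a magic partition $E$, then $A = C(G)$ where $G \subset S_n$ is the group associated to $E$, and this is commutative by construction. So all the work is in $(1) \Rightarrow (2)$.

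Suppose $A$ is commutative. By Theorem 1.8(1), $A = C(G)$ for some subgroup $G \subset S_n$, and the defining quotient $p : A_s(n) \to A$ is the canonical map $u_{ij} \mapsto \chi_{ij|G}$. Since $A$ is the Hopf image of $\pi$, the representation $\pi$ factors as $\pi = \pi' \circ p$ for some $C^*$-algebra representation $\pi' : C(G) \to B(H)$. Because $C(G) \cong \mathbb{C}^{|G|}$ is a finite-dimensional commutative $C^*$-algebra, the spectral theorem gives an orthogonal decomposition
$$H = \bigoplus_{\sigma \in G} H_\sigma, \qquad H_\sigma := \pi'(\chi_{\{\sigma\}})H,$$
on which every $f \in C(G)$ acts as the scalar $f(\sigma)$.

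The remaining step is pure bookkeeping. I would enumerate $G = \{\sigma_1,\ldots,\sigma_N\}$, set $X_k := H_{\sigma_k}$ (some of which may be zero, but this is harmless for the definition of $X^E$), and put
$$E_{ij} := \{\, k \in \{1,\ldots,N\} : \sigma_k(j) = i \,\}.$$
Since each $\sigma_k$ is a genuine permutation of $\{1,\ldots,n\}$, for fixed $j$ the index $k$ lies in exactly one $E_{ij}$, so the columns of $E$ partition $\{1,\ldots,N\}$; the same argument applied to $\sigma_k^{-1}$ handles the rows, so $E$ is a magic partition.

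Finally, the projection $P_{ij} = \pi(u_{ij}) = \pi'(\chi_{ij|G})$ acts on $X_k = H_{\sigma_k}$ by the scalar $\chi_{ij}(\sigma_k) = \delta_{\sigma_k(j),i}$, which is $1$ when $k \in E_{ij}$ and $0$ otherwise. Hence the range of $P_{ij}$ is exactly $\bigoplus_{k \in E_{ij}} X_k = X^E_{ij}$, giving $X_{ij} = X^E_{ij}$ as required. The only potential pitfall is the case where some $H_\sigma$ vanishes, but this causes no trouble since a zero summand in the orthogonal decomposition $H = \bigoplus X_k$ is allowed and does not affect the fact that $E$ is a magic partition.
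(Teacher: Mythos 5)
Your proof is correct, but it takes a slightly different route than the paper's, so let me compare. The paper's argument stays entirely at the level of the image algebra: since $A$ is commutative, so is its quotient $B = C^*(P_{ij}) \subset B(H)$, and Gelfand gives $B \simeq C(I)$ for a finite set $I$; the magic unitary $(P_{ij})$ over $C(I)$ then corresponds to a magic matrix of characteristic functions, i.e.\ to a magic partition $E$ of $I$, and the spectral decomposition of $C(I)$ acting on $H$ gives $X = X^E$. Your version instead first invokes Theorem 1.8(1) to identify the Hopf algebra $A$ itself as $C(G)$ for a subgroup $G \subset S_n$, factors $\pi = \pi' \circ p$ through $C(G)$, and builds the magic partition explicitly from $G$ by setting $E_{ij} = \{k : \sigma_k(j) = i\}$. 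Both routes ultimately reduce to the spectral theorem for a finitely generated commutative $C^*$-subalgebra of $B(H)$; the paper's is leaner (it never needs to know $A = C(G)$), whereas yours carries more information --- you recover not just \emph{some} magic partition $E$ but the specific one whose associated group, in the sense of Theorem 2.7, is precisely the group $G$ with $A = C(G)$, so the two directions of the equivalence mesh neatly. You also correctly note and dispose of the only genuine subtlety, namely that some of the spectral subspaces $H_\sigma$ may be zero; this just means your index set $I = \{1,\ldots,|G|\}$ may be larger than the spectrum of $B$, which is harmless since zero summands are permitted in the decomposition defining $X^E$. One small slip: in your opening sentence the implication $(2) \Rightarrow (1)$ follows from Theorem 2.7 (the statement ``$X = X^E$ implies $A = C(G)$''), not from Theorem 2.8, which is the very statement being proved.
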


\begin{proof}
Indeed, if $A$ is commutative, its quotient algebra $B=C^*(P_{ij})$ must be commutative as well. By applying the Gelfand theorem we get an isomorphism $B\simeq C(I)$, where $I$ is a certain finite set. The magic unitary $(P_{ij})$ must correspond in this way to a magic matrix of characteristic functions $(\chi_{ij})$, which should come in turn from a magic partition $(E_{ij})$ of the set $I$. This gives the result.
\end{proof}

We discuss now the classification of small order magic decompositions, and the computation of the associated Hopf algebras. We fix a Hilbert space $H$.

\begin{theorem}
The $2\times 2$ magic decompositions of $H$ are of the form
$$X=\begin{pmatrix}
A&B\\
B&A
\end{pmatrix}$$
with $H=A\oplus B$. The associated Hopf algebra is $C(G)$, with $G\in\{1,\mathbb Z_2\}$.
\end{theorem}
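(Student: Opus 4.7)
The plan is first to pin down the combinatorial shape of $X$ using only the orthogonality constraints, and then to recognize the answer as a magic partition decomposition in the sense of Theorem 2.7, reducing the Hopf algebra computation to listing subgroups of $S_2$.

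For the structural step, I would read the axioms off the definition: each row and column of $X$ is an orthogonal direct-sum decomposition of $H$. Row $1$ gives $X_{12}=X_{11}^\perp$, column $1$ gives $X_{21}=X_{11}^\perp$, and row $2$ gives $X_{22}=X_{21}^\perp=X_{11}$. Setting $A:=X_{11}$ and $B:=X_{11}^\perp$ then yields the claimed matrix form and an orthogonal decomposition $H=A\oplus B$.

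Next I would note that any such $X$ is of the form $X^E$ considered before Theorem 2.7. Concretely, let $N$ be the number of non-zero spaces among $\{A,B\}$; label them $X_1,\dots,X_N$ and read the magic partition $E$ of $\{1,\dots,N\}$ off the positions where $A$ and $B$ appear in $X$. Theorem 2.7 then identifies the Hopf algebra as $C(G)$, with $G\subset S_2$ generated by the associated permutations $\sigma_k$.

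Finally I would enumerate the three cases. If $A,B\neq 0$ then $N=2$ with $E_{11}=E_{22}=\{1\}$ and $E_{12}=E_{21}=\{2\}$, giving $\sigma_1=e$ and $\sigma_2=(1\,2)$, so $G=\mathbb{Z}_2$. If $B=0$ then $X=\mathrm{diag}(A,A)$, $N=1$, $\sigma_1=e$, and $G=\{1\}$. If $A=0$ then $X$ is off-diagonal, $N=1$, $\sigma_1=(1\,2)$, and $G=\mathbb{Z}_2$. In every case $G\in\{1,\mathbb{Z}_2\}$, as required. No step here is a real obstacle; the only mild subtlety is making sure the degenerate cases $A=0$ or $B=0$ are legitimately covered by Theorem 2.7 with $N=1$, and observing that the two degenerate configurations can produce different groups.
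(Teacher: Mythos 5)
Your argument is correct and takes a somewhat different route from the paper's for the Hopf algebra assertion. The paper's proof notes tersely that $C^*(P_{ij})$ is generated by the single projection $P_A$, hence has dimension $1$ or $2$ according to whether one of $A,B$ vanishes, and combines this with $G\subset S_2$ to conclude $G\in\{1,\mathbb Z_2\}$. You instead recognize $X$ as a magic partition decomposition $X^E$ in the sense used before Theorem 2.7 and then read $G$ off the generating permutations $\sigma_k$. The longer route is worthwhile: it pins down $G$ exactly in each configuration, and in particular records a subtlety that the naive dimension count obscures. When $A=0$, the image algebra $C^*(P_{ij})=\mathbb C$ is $1$-dimensional, yet the Hopf image is $C(\mathbb Z_2)$, since $\pi$ cannot factor through $C(1)$ once $P_{11}=0$: any morphism of quantum permutation algebras $C(S_2)\to C(1)$ must send $u_{11}=\chi\{e\}$ to $1$, whereas $\pi(u_{11})=0$. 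So the two degenerate cases $A=0$ and $B=0$, indistinguishable at the level of $\dim C^*(P_{ij})$, produce different Hopf images, exactly as your $\sigma_1$ computation shows. Your remark that the $N=1$ magic partitions (with some $E_{ij}=\emptyset$) are legitimately covered by Theorem 2.7 is also correct; nothing in its statement or proof requires the $E_{ij}$ to be nonempty.
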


\begin{proof}
First, it follows from definitions that $X$ must be of the above form. Since the algebra generated by the projections onto $A,B$ is of dimension 1 or 2, depending on whether one of $A,B$ is 0 or not, this gives the second assertion.
\end{proof}

\begin{theorem}
The $3\times 3$ magic decompositions of $H$ are of the form
$$X=\begin{pmatrix}
A\oplus B&C\oplus D&E\oplus F\\
C\oplus F&A\oplus E&B\oplus D\\
E\oplus D&B\oplus F&A\oplus C
\end{pmatrix}$$
with $H=A\oplus\ldots\oplus F$. The associated algebra is $C(G)$, with $G\in\{1,\mathbb Z_2,\mathbb Z_3,S_3\}$.
\end{theorem}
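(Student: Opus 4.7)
The plan is to reduce to Theorem 2.8 by first showing that the Hopf image $A$ is automatically commutative when $n=3$, then unpacking the resulting magic partition into a sum indexed by $S_3$. Specifically, I would invoke Theorem 1.6 (1), which says $A_s(3)\simeq C(S_3)$; since $A$ is a quotient of $A_s(3)$, it is commutative, and Theorem 2.8 then yields a finite index set $J$, an orthogonal decomposition $H=\bigoplus_{k\in J}Y_k$, and subsets $J_{ij}\subset J$ forming a magic partition of $J$ such that $X_{ij}=\bigoplus_{k\in J_{ij}}Y_k$. As in the discussion preceding Theorem 2.7, each $k\in J$ determines a unique permutation $\sigma_k\in S_3$, characterised by $\sigma_k(j)=i$ when $k\in J_{ij}$.

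Next, I would regroup the atoms by the permutation they implement. For each $\tau\in S_3$, set $Z_\tau=\bigoplus_{k:\sigma_k=\tau}Y_k$, with the convention $Z_\tau=0$ if no such $k$ exists. Then $H=\bigoplus_{\tau\in S_3}Z_\tau$ is an orthogonal decomposition into (at most) six summands, and each entry $X_{ij}=\bigoplus_{\tau(j)=i}Z_\tau$ splits as a direct sum of exactly two of them, since there are precisely $(3-1)!=2$ permutations sending $j$ to $i$. Labelling $A=Z_e$, $B=Z_{(23)}$, $C=Z_{(12)}$, $D=Z_{(132)}$, $E=Z_{(13)}$, $F=Z_{(123)}$ and running through the nine entries by the rule $\tau(j)=i$ reproduces exactly the matrix displayed in the statement (for instance the $(3,1)$ entry collects the two permutations sending $1$ to $3$, namely $(13)$ and $(132)$, giving $E\oplus D$).

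Finally, by Theorem 2.7 the Hopf image is $C(G)$, where $G\subset S_3$ is the subgroup generated by $\{\sigma_k:k\in J\}=\{\tau\in S_3:Z_\tau\neq 0\}$. The subgroups of $S_3$, up to isomorphism, are $\{1\}$, $\mathbb Z_2$ (the three conjugate copies generated by a transposition), $\mathbb Z_3=A_3$, and $S_3$, yielding the four possibilities in the statement. The only substantive input is Theorem 1.6, which ensures commutativity of $3\times 3$ magic unitaries; after that the argument is purely bookkeeping, rearranging the atoms $Y_k$ according to the permutations they realize.
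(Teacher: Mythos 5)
Your argument is correct and follows the same route as the paper: invoke Theorem 1.6 to get commutativity, then Theorem 2.8 to reduce to a magic partition, then classify the $3\times 3$ magic partitions (and appeal to Theorem 2.7 for the subgroup statement). The only difference is cosmetic: the paper simply asserts the displayed form of a $3\times 3$ magic partition, whereas you explicitly derive it by grouping atoms $Y_k$ according to the permutation $\sigma_k$ they realize, which is a nice way to see why exactly six subspaces appear and why each entry is a sum of exactly two of them.
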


\begin{proof}
We know from Theorem 1.6 that $A_s(3)$ is commutative, and it follows that each of its quotients, and in particular the Hopf image, is commutative as well. 

Now by using Theorem 2.8 we get that our magic basis comes from a magic partition. But the $3 \times 3$ magic partitions are of the following form:
$$\begin{pmatrix}
A\cup B&C\cup D&E\cup F\\
C\cup F&A\cup E&B\cup D\\
E\cup D&B\cup F&A\cup C
\end{pmatrix}$$

This shows that $X$ is of the form in the statement, which proves the result.
\end{proof}

\section{General results}

As explained in the previous section, the study of quantum permutation algebras reduces in principle to that of the magic decompositions of Hilbert spaces.

In this section we present a number of general results, which are essential for this approach. We discuss first the corepresentation theory of Hopf images. 

The tensor powers of a magic unitary $U\in M_n(A)$ are given by:
$$U^{\otimes k}=(U_{i_1j_1}\ldots U_{i_kj_k})_{i_1\ldots i_k,j_1\ldots j_k}$$

In other words, the tensor power is the $n^k\times n^k$ matrix formed by all the length $k$ products between the entries of $U$. Observe that $U^{\otimes k}$ is indeed a magic unitary.

\begin{definition}
Associated to a magic unitary $U\in M_n(A)$ are the spaces
$$Hom(U^{\otimes k},U^{\otimes l})=\{T\in M_{n^l\times n^k}(\mathbb C)| TU^{\otimes k}=U^{\otimes l}T\}$$
with $k,l$ ranging over all positive integers.
\end{definition}

In the case where $U$ is the magic unitary associated to a quantum permutation algebra, we have here Woronowicz's representation theory notions in \cite{wo1}, \cite{wo2}.

The main representation theory problem for a quantum permutation algebra is to compute the above Hom-spaces, for the fundamental magic unitary. The following result from \cite{bb3} reduces this abstract problem to a Hilbert space computation.

\begin{theorem}
Given a representation $\pi:A_s(n)\to B(H)$, we have
$$Hom(u^{\otimes k},u^{\otimes l})=Hom(P^{\otimes k},P^{\otimes l})$$
where $u$ is the fundamental corepresentation of the Hopf image, and $P_{ij}=\pi(u_{ij})$.
\end{theorem}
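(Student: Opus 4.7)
The plan is to use Tannakian duality, in the form developed for Hopf images in \cite{bb3}, together with the minimality property from Definition 2.4. I will prove the two inclusions separately.

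For the forward inclusion, let $p:A_s(n)\to A_\pi$ be the canonical quotient and $j:A_\pi\to B(H)$ be the associated inner faithful representation, so that $\pi=jp$ and $P_{ij}=j(u_{ij})$. If $T\in\mathrm{Hom}(u^{\otimes k},u^{\otimes l})$ then $Tu^{\otimes k}=u^{\otimes l}T$ as matrices over $A_\pi$; applying $j$ entrywise gives $TP^{\otimes k}=P^{\otimes l}T$, so $T\in\mathrm{Hom}(P^{\otimes k},P^{\otimes l})$.

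For the reverse inclusion, form the collection $\mathcal{C}^{k,l}=\mathrm{Hom}(P^{\otimes k},P^{\otimes l})$. The tensor power $P^{\otimes k}$ is itself a magic unitary, so one checks directly that $\mathcal{C}$ is closed under composition, tensor product, and taking adjoints, and that it contains the identities; thus $\mathcal{C}$ is a concrete $C^*$-tensor category of the type to which Woronowicz's Tannakian duality \cite{wo2} applies. It therefore corresponds to a universal compact quantum group $A_\mathcal{C}$, which is a quotient of $A_s(n)$ (since $\mathcal{C}$ contains $\mathrm{Hom}(u_s^{\otimes k},u_s^{\otimes l})$), with fundamental corepresentation $v$ satisfying $\mathrm{Hom}(v^{\otimes k},v^{\otimes l})=\mathcal{C}^{k,l}$ on the nose.

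The key observation is that the magic unitary $(P_{ij})$ on $H$ satisfies, by the very definition of $\mathcal{C}$, all the intertwining relations of $\mathcal{C}$. By the reconstruction half of Tannakian duality, this means $\pi$ factors through $A_\mathcal{C}$. Combining this with the forward inclusion applied to the factorization $A_s(n)\to A_\mathcal{C}\to B(H)$ shows that any Hopf quotient of $A_s(n)$ that factorizes $\pi$ has Hom-spaces sitting inside $\mathcal{C}$; but quotients correspond, under Tannakian duality, to supercategories, so $A_\mathcal{C}$ is the \emph{smallest} Hopf quotient of $A_s(n)$ through which $\pi$ factors. By Definition 2.4 this smallest quotient is exactly $A_\pi$, hence $\mathrm{Hom}(u^{\otimes k},u^{\otimes l})=\mathcal{C}^{k,l}$, completing the proof.

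The main obstacle is the last step: identifying $A_\mathcal{C}$ with $A_\pi$ requires the precise form of Tannakian duality for Hopf images, including the correspondence between quotient Hopf algebras and supercategories of intertwiners. This identification, together with the fact that the Tannakian construction reproduces the same Hopf ideal as the one used in the construction of $A_\pi$, is the technical content of \cite{bb3} that is invoked here.
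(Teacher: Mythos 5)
Your proof is correct and follows essentially the same route as the paper's: both identify the Hopf image with the Tannakian dual (in the sense of \cite{wo2}) of the concrete tensor category $\{\mathrm{Hom}(P^{\otimes k},P^{\otimes l})\}$, and both defer the technical verification that this Tannakian dual is indeed the smallest factorizing Hopf quotient to \cite{bb3}. Your write-up is a reasonable expansion of the paper's one-line sketch, making explicit the easy inclusion $\mathrm{Hom}(u^{\otimes k},u^{\otimes l})\subseteq\mathrm{Hom}(P^{\otimes k},P^{\otimes l})$ and the minimality argument via the quotient/supercategory correspondence.
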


\begin{proof}
The idea is that the collection of vector spaces on the right forms a tensor category, embedded into the tensor category of finite dimensional Hilbert spaces, and the Hopf image can be shown to be the Tannakian dual of this category, in the sense of \cite{wo2}. We refer to \cite{bb3} for full details regarding this proof.
\end{proof}

As a first application, we will solve now the cocommutative problem. We begin with a technical result, which is of independent interest, in connection with \cite{bi2}.

\begin{proposition}
If a magic decomposition $X$ is non-degenerate, in the sense that $X_{ij}\neq 0$ for any $i,j$, then $Hom(1,u)=\mathbb C$.
\end{proposition}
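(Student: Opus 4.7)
The plan is to exploit Theorem 3.2 to reduce the question to a concrete linear-algebra computation on $H$, and then extract the conclusion from nondegeneracy by testing against vectors in the subspaces $X_{jk}$.

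First I would unpack what $\mathrm{Hom}(1,u)$ means: an element is a linear map $T:\mathbb C\to\mathbb C^n$, i.e.\ a vector $v=(v_1,\ldots,v_n)\in\mathbb C^n$, satisfying $u\cdot T=T$. Written out, using the convention $\alpha(\delta_i)=\sum_j\delta_j\otimes u_{ji}$ from Definition 1.5, this says
\[
\sum_{i=1}^n u_{ji}\,v_i \;=\; v_j\cdot 1_A \qquad\text{for every } j.
\]
Now apply Theorem 3.2 with $k=0$, $l=1$: the space $\mathrm{Hom}(1,u)$ identifies with $\mathrm{Hom}(1,P)$, which by the same unpacking is the space of $v\in\mathbb C^n$ such that
\[
\sum_{i=1}^n v_i\,P_{ji} \;=\; v_j\cdot \mathrm{id}_H \qquad\text{for every } j.
\]
Thus the statement reduces to showing that this linear system forces $v$ to be constant.

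Next I would fix $j$ and an arbitrary index $k$, and test the equation on a nonzero vector. Since $X$ is a magic decomposition, the row $(X_{j1},\ldots,X_{jn})$ is an orthogonal decomposition of $H$, so the $P_{ji}$ are pairwise orthogonal and sum to $\mathrm{id}_H$. In particular, for any $\xi\in X_{jk}$ we have $P_{ji}\xi=\delta_{ik}\xi$. Applying the identity $\sum_i v_i P_{ji}=v_j\,\mathrm{id}_H$ to $\xi$ therefore gives $v_k\xi=v_j\xi$, and as soon as $\xi\neq 0$ we conclude $v_k=v_j$.

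Here is where nondegeneracy enters: the hypothesis $X_{jk}\neq 0$ for all $j,k$ guarantees that for every pair $(j,k)$ we can choose a nonzero $\xi\in X_{jk}$, and hence $v_j=v_k$ for all $j,k$. Thus $v$ is a scalar multiple of $(1,1,\ldots,1)$, and conversely this constant vector is clearly a solution because each row of $P$ sums to $\mathrm{id}_H$. Therefore $\mathrm{Hom}(1,u)$ is one-dimensional, as claimed. There is no real obstacle here; the whole point is that Theorem 3.2 converts an \emph{a priori} inaccessible question about the Hopf image into a row-by-row evaluation of projections, at which point nondegeneracy does all the work.
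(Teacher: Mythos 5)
Your proof is correct and follows essentially the same route as the paper: apply Theorem 3.2 with $k=0$, $l=1$ to reduce to the condition $T=PT$, and then use nondegeneracy to force all entries of $T$ to coincide. The paper phrases the last step as ``the only linear combinations of pairwise orthogonal nonzero projections that are scalar have equal coefficients,'' while you make this explicit by testing against a nonzero vector in each $X_{jk}$; these are the same observation.
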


\begin{proof}
We apply Theorem 3.2, with $k=0$ and $l=1$. We get that for any column vector $T=(t_i)$ we have:
\begin{eqnarray*}
T\in Hom(1,u)
&\iff&T\in Hom(1,P)\\
&\iff&T=PT\\
&\iff&t_i=\sum_jt_jP_{ij},\,\forall i
\end{eqnarray*}

Consider one of the $n$ conditions on the right. The projections $P_{ij}$ are pairwise orthogonal, and by non-degeneracy, they are nonzero. Thus their only linear combinations which are scalars are those having equal coefficients, and we are done.
\end{proof}

A magic partition $(E_{ij})$ is called abelian if the associated group $G\subset S_n$ is abelian.

\begin{theorem}
For a non-degenerate magic decomposition $X_{ij}$, the following are equivalent:
\begin{enumerate}
\item $A$ is cocommutative.

\item $X=X^E$ for an abelian magic partition $E$.
\end{enumerate}
\end{theorem}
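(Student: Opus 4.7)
The easy direction $(\Leftarrow)$ is essentially immediate from Theorem 2.8: if $X=X^E$ for an abelian magic partition $E$, then $A=C(G)$ with $G\subset S_n$ the associated abelian group, and $C(G)$ is cocommutative precisely because $G$ is abelian (the comultiplication $\Delta(f)(\sigma,\tau)=f(\sigma\tau)$ is flip-invariant iff $\sigma\tau=\tau\sigma$ for all $\sigma,\tau$).

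For $(\Rightarrow)$, suppose $A$ is cocommutative and $X$ is non-degenerate. By Theorem 1.8(2), $A=C^*(\Gamma)$ for some discrete group $\Gamma$ (a quotient of a free product of finite cyclic groups). Cocommutativity forces every finite-dimensional corepresentation to split into one-dimensional coreps, i.e.\ group-like elements of $\Gamma$; hence there exist a unitary $W\in U_n$ and elements $g_1,\ldots,g_n\in\Gamma$ with
$$u_{ij}=\sum_{k=1}^{n} W_{ik}\,\overline{W_{jk}}\,g_k.$$
The $g_k$'s generate $\Gamma$, since the $u_{ij}$'s generate $A$, and Proposition 3.3 (applicable by non-degeneracy) says $\dim Hom(1,u)=1$, so exactly one of them, say $g_1$, is the identity $e$.

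The crux is then to prove $\Gamma$ abelian; once this is established, $A\simeq C(\hat\Gamma)$ is commutative, Theorem 2.8 yields $X=X^E$ for a magic partition $E$, and cocommutativity of $A\simeq C(G)$ forces the associated group $G$ abelian. I plan to obtain commutativity of $\Gamma$ by unpacking the magic-unitary axioms $u_{ij}^2=u_{ij}=u_{ij}^*$, $\sum_{j}u_{ij}=1$, and $u_{ij}u_{ij'}=0$ (for $j\neq j'$) as identities in $\mathbb C[\Gamma]$ via the above formula, and comparing coefficients group element by group element. The row-sum identity, combined with linear independence of distinct group elements, already pins the column of $W$ attached to $g_1=e$ to be the constant vector $\tfrac{1}{\sqrt n}(1,\ldots,1)^t$ (up to phase) and forces every other column of $W$ to have zero entry-sum. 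The idempotency and orthogonality relations then translate into convolution-type equations on the coefficients $c_{ij}^{(k)}=W_{ik}\overline{W_{jk}}$, and combined with non-degeneracy (each $u_{ij}\neq 0$ in $A$) I expect them to rule out any non-trivial commutator among the generators $g_k$.

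The main obstacle is precisely this last combinatorial step: extracting ``no non-trivial commutators'' out of the magic-unitary convolution identities in a potentially non-abelian $\mathbb C[\Gamma]$. A parallel, likely cleaner, route would be via Theorem 3.2 and Tannakian reasoning: show that the flip $\Sigma\in M_{n^2}(\mathbb C)$ belongs to $Hom(P\otimes P,P\otimes P)$—equivalently that the $P_{ij}$ pairwise commute in $B(H)$—which by Theorem 3.2 means $\Sigma\in Hom(u\otimes u,u\otimes u)$, and then deduce commutativity of $A$ from the fact that the fundamental corep of $A$ lives in a symmetric monoidal subcategory, via minimality of the Hopf image.
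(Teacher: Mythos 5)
Your easy direction $(2)\Rightarrow(1)$ is fine and matches the paper.

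For $(1)\Rightarrow(2)$, you correctly identify (and correctly use Proposition 3.3 via non-degeneracy) that $\mathrm{Hom}(1,u)=\mathbb C$, i.e.\ the fundamental coaction is ergodic, and you correctly reduce the whole thing to showing that $\Gamma$ is abelian, equivalently that $A$ is in fact commutative. But you never establish this, and you say so yourself: ``The main obstacle is precisely this last combinatorial step.'' That is the entire content of this implication, so as written the proposal has a genuine gap, not merely an omitted routine verification.

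The paper bridges that gap by citing a structural result from \cite{bi2}: a cocommutative quantum permutation algebra whose fundamental coaction is ergodic must be commutative. With that in hand, Theorem~2.8 gives $X=X^E$ and Theorem~2.7 gives $A=C(G)$, and cocommutativity of $C(G)$ forces $G$ abelian. Your first proposed route (expanding $u_{ij}=\sum_k W_{ik}\overline{W_{jk}}\,g_k$ and comparing coefficients in $\mathbb C[\Gamma]$) amounts to reproving that theorem from scratch; it is plausible but nontrivial, and you have not carried it out. Your second, ``Tannakian'' route has a more serious problem: you propose to show that the flip $\Sigma$ lies in $\mathrm{End}(u^{\otimes 2})$, but a direct computation shows $\Sigma\in\mathrm{End}(u^{\otimes 2})$ is \emph{equivalent} to the $u_{ij}$ pairwise commuting, i.e.\ to the commutativity of $A$ that you are trying to prove; cocommutativity of $\Delta$ (which concerns the order of the tensor legs in $\Delta u_{ij}=\sum_k u_{ik}\otimes u_{kj}$) does not by itself give you $\Sigma$ as an intertwiner, and non-abelian group algebras $C^*(\Gamma)$ are cocommutative without $\Sigma$ being an endomorphism of $u^{\otimes 2}$. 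So the second route as stated is circular unless you explain where ergodicity enters, and the first route is incomplete. Either cite the ergodic-plus-cocommutative-implies-commutative result from \cite{bi2} as the paper does, or supply the missing combinatorial argument in full.
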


\begin{proof}
$(1)\implies (2)$ follows from Proposition 3.3. Indeed, in terms of \cite{bi2}, the condition $Hom(1,u)=\mathbb C$ means that the fundamental coaction of $A$ is ergodic, so it follows from the results in there that if $A$ is cocommutative, then it is commutative. Thus we can apply Theorem 2.8 and Theorem 2.7, and we get the result.

$(2)\implies (1)$ follows from Theorem 2.7. Indeed, we know that in the case $X=X^E$ we have $A=C(G)$. Thus if $E$ is abelian we have $A=C^*(\widehat{G})$, as claimed.
\end{proof}

We discuss now the behavior of the Hopf image with respect to the various product operations at the level of the magic decompositions, or of the magic unitaries. 

The simplest such operation is the tensor product. Given two magic unitaries $U\in M_n(B(H))$ and $V\in M_m(B(K))$, we can form the following matrix:
$$W_{ia,jb}=U_{ij}\otimes V_{ab}$$

It follows from definitions that this matrix is a $nm\times nm$ magic unitary over $B(H\otimes K)$. We call it tensor product of $U,V$, and we use the notation $W=U\otimes V$.

\begin{theorem}
The Hopf algebra associated to $U\otimes V$ is a quotient of $A\otimes B$, where $A$ is the Hopf image for $U$, and $B$ is the Hopf image for $V$.
\end{theorem}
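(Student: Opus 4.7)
The plan is to use the universal property of the Hopf image from Definition 2.4: the Hopf image $C$ of the representation $\pi: A_s(nm) \to B(H\otimes K)$ defined by $\pi(u_{ia,jb}) = W_{ia,jb} = U_{ij}\otimes V_{ab}$ is the \emph{smallest} quantum permutation algebra factorizing $\pi$. So to prove $C$ is a quotient of $A\otimes B$, it suffices to exhibit $A\otimes B$ as a quantum permutation algebra (in the sense of Definition 1.7) whose associated magic unitary is $w_{ia,jb} = u_{ij}\otimes v_{ab}$, and to show that $\pi$ factors through it.

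The first step is to check that $w\in M_{nm}(A\otimes B)$ is a genuine magic unitary. The entries are projections since $u_{ij}$ and $v_{ab}$ are. For the row indexed by $(i,a)$: $\sum_{j,b} u_{ij}\otimes v_{ab} = (\sum_j u_{ij})\otimes(\sum_b v_{ab}) = 1\otimes 1$, and orthogonality within a row follows because $(u_{ij}\otimes v_{ab})(u_{ij'}\otimes v_{ab'}) = u_{ij}u_{ij'}\otimes v_{ab}v_{ab'}$ vanishes as soon as either $j\neq j'$ or $b\neq b'$. Columns are similar.

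The second step is to verify that $A\otimes B$, with this $nm\times nm$ magic matrix $w$, satisfies conditions (1)--(4) of Definition 1.7. The entries $w_{ia,jb}$ generate $A\otimes B$: partial sums give $u_{ij}\otimes 1 = \sum_b w_{ia,jb}$ (independent of $a$) and $1\otimes v_{ab} = \sum_j w_{ia,jb}$, and these generate $A\otimes B$ because $\{u_{ij}\}$ and $\{v_{ab}\}$ generate $A$ and $B$ respectively. The tensor-product Hopf algebra structure on $A\otimes B$ satisfies, as a direct computation using the Hopf algebra axioms for $A$ and $B$ from Definition 1.7:
\begin{eqnarray*}
\Delta_{A\otimes B}(u_{ij}\otimes v_{ab}) &=& \sum_{k,c}(u_{ik}\otimes v_{ac})\otimes(u_{kj}\otimes v_{cb}) = \sum_{k,c} w_{ia,kc}\otimes w_{kc,jb}\\
\varepsilon_{A\otimes B}(u_{ij}\otimes v_{ab}) &=& \delta_{ij}\delta_{ab} = \delta_{(i,a),(j,b)}\\
S_{A\otimes B}(u_{ij}\otimes v_{ab}) &=& u_{ji}\otimes v_{ba} = w_{jb,ia}
\end{eqnarray*}
which are exactly the required formulas for $w$. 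So $(A\otimes B, w)$ is a quantum permutation algebra, and by the universal property of $A_s(nm)$ there is a unique surjection $A_s(nm)\to A\otimes B$ sending $u_{ia,jb}\mapsto u_{ij}\otimes v_{ab}$.

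The third step is to factor $\pi$ through this surjection. Since $A$ is the Hopf image of $U$, there is a morphism $p_A:A\to B(H)$ with $p_A(u_{ij})=U_{ij}$, and similarly $p_B:B\to B(K)$ with $p_B(v_{ab})=V_{ab}$. The composition $A_s(nm)\to A\otimes B \xrightarrow{p_A\otimes p_B} B(H\otimes K)$ sends $u_{ia,jb}\mapsto U_{ij}\otimes V_{ab} = W_{ia,jb}$, which is exactly $\pi$. By minimality of the Hopf image $C$ of $\pi$, we conclude that $C$ is a quotient of $A\otimes B$.

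I do not expect any serious obstacle: the only slightly delicate point is verifying that the standard Hopf structure on the tensor product $A\otimes B$ restricts correctly to the magic unitary $w$, which is a routine use of the tensor flip in the coproduct. The result stops short of equality $C = A\otimes B$ because a priori the image of $A\otimes B\to B(H\otimes K)$ could be strictly smaller than $A\otimes B$ --- for instance, correlations between $U$ and $V$ could force additional relations not seen in either factor.
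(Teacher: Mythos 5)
Your proof is correct and follows essentially the same strategy as the paper: factor the representation $\pi: A_s(nm)\to B(H\otimes K)$ through $A\otimes B$ equipped with the magic unitary $w_{ia,jb}=u_{ij}\otimes v_{ab}$, then invoke minimality of the Hopf image. The only cosmetic difference is that the paper routes through the intermediate quotient $A_s(nm)\to A_s(n)\otimes A_s(m)\to A\otimes B$, taking the quantum-permutation-algebra structure on the tensor product for granted, whereas you verify it directly by checking the magic unitary axioms and the compatibility of $\Delta$, $\varepsilon$, $S$ with $w$.
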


\begin{proof}
The representation of $A_s(nm)$ associated to $U\otimes V$ has a factorization of the following type:
$$\begin{matrix}
A_s(nm)&&\to&&B(H\otimes K)\\
\\
\downarrow&&&&\uparrow\\
\\
A_s(n)\otimes A_s(m)&\to&A\otimes B&\to&B(H)\otimes B(K)
\end{matrix}$$

Indeed, we can define the bottom arrows to be the tensor products of the factorizations associated to $A,B$, and the other arrows to be the canonical ones.

Now since the representation associated to $U\otimes V$ factorizes through $A\otimes B$, we get a morphism as in the statement.
\end{proof}

An interesting generalization of the notion of tensor product, to play a key role in what follows, is the Di\c t\u a product. The following definition is inspired from \cite{di1}. 

\begin{definition}
The Di\c t\u a product of a magic unitary $U\in M_n(B(H))$ with a family of magic unitaries $V^1,\ldots,V^n\in M_m(B(K))$ is the magic unitary given by:
$$W_{ia,jb}=U_{ij}\otimes V^i_{ab}$$
We use the notation $W=U\otimes (V^1,\ldots,V^n)$.
\end{definition}

It follows indeed from definitions that the Di\c t\u a product is a $nm\times nm$ magic unitary over the algebra $B(H\otimes K)$. Observe that in the case where the magic unitaries $V^i$ are all equal, we get an usual tensor product of magic unitaries:
$$U\otimes (V,\ldots,V)=U\otimes V$$

In order to investigate the Hopf images of the Di\c t\u a products, we will need the following definition, which makes us slightly exit from the formalism in \cite{bb3}.

\begin{definition}
The common Hopf image of a family of $C^*$-algebra representations $\pi_i:A_s(n)\to B$ with $i\in I$ is the smallest quantum permutation algebra $A$ realizing a factorization $A_s(n)\to A\to B$ of the representation $\pi_i$, for any $i\in I$.
\end{definition}

As for the usual notion of Hopf image, this construction is best understood in terms of discrete quantum groups. Let $\Gamma$ be the discrete quantum group associated to $A_s(n)$, and let $\Gamma/\Lambda_i$ be the discrete quantum group associated to the Hopf image $A_i$ of the representation $\pi_i$. With these notations, we have the following diagram:
$$\begin{matrix}
A_s(n)&\to&A_i&\to&B\\
\\
||&&||&&||\\
\\
C^*(\Gamma)&\to&C^*(\Gamma/\Lambda_i)&\to&B
\end{matrix}$$

Now if we look for the discrete quantum group associated to the common Hopf image, this must be the quotient of $\Gamma$ by the smallest subgroup containing each $\Lambda_i$. In other words, the common Hopf image is simply given by:
$$A=C^*\left(\Gamma/<\Lambda_i|i\in I>\right)$$

This explanation might seem of course quite heuristic. The idea, however, is that the common Hopf image can be constructed by using a suitable ideal, as in \cite{bb3}.

An alternative approach is simply by using the results in \cite{bb3}: each representation factorizes through its Hopf image $A_s(n)/J_i$, so the common Hopf image should be $A_s(n)/J$, where $J=<J_i>$ is the smallest Hopf ideal containing all the ideals $J_i$.

\begin{theorem}
The algebra associated to $U\otimes (V^1,\ldots,V^n)$ is a quotient of $B*_wA$, where $A$ is the Hopf image for $U$, and $B$ is the common Hopf image for $V^1,\ldots,V^n$.
\end{theorem}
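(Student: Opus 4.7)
The plan is to imitate the factorization argument from Theorem 3.5 for the ordinary tensor product, replacing the target $A\otimes B$ by the free wreath product $B*_wA$ and carefully accounting for the $n$ distinct magic unitaries $V^1,\ldots,V^n$ via the common Hopf image introduced in Definition 3.7. Concretely, I aim to produce a commutative square
$$\begin{matrix}
A_s(nm)&\to&B(H\otimes K)\\
\\
\downarrow&&\uparrow\\
\\
A_s(m)*_wA_s(n)&\to&B*_wA
\end{matrix}$$
whose top arrow is the representation $\pi_W$ attached to $W=U\otimes(V^1,\ldots,V^n)$. Given such a square, the minimality property of the Hopf image (Definition 2.4) immediately forces the Hopf image of $\pi_W$ to be a quotient of $B*_wA$, which is the desired conclusion.

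I would construct the remaining three arrows as follows. The leftmost vertical map is universal: under the standard convention, $w_{ia,jb}=v_{ab}^{(i)}u_{ij}$ is a magic unitary over $A_s(m)*_wA_s(n)$, so the universal property of $A_s(nm)$ yields a canonical morphism sending its fundamental magic unitary to $w$. The rightmost vertical map $B*_wA\to B(H\otimes K)$ is defined on generators by $u_{ij}\mapsto U_{ij}\otimes 1_K$ and $v_{ab}^{(i)}\mapsto 1_H\otimes V^i_{ab}$; this is well defined because each $V^i$ factors through $B\to B(K)$ by the common Hopf image property, and because the two families trivially commute inside $B(H)\otimes B(K)$, hence respect the defining relations $[v_{ab}^{(i)},u_{ij}]=0$ of the free wreath product. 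The bottom horizontal arrow is obtained by functoriality of $*_w$, combining the canonical surjection $A_s(n)\to A$ on the outer factor with the surjections $A_s(m)\to B$ on each of the $n$ inner copies; these glue consistently into a single morphism precisely because $B$ is a \emph{common} Hopf image. Commutativity of the square is then a one-line check on generators: both paths send the $(ia,jb)$-entry of the fundamental magic unitary of $A_s(nm)$ to $U_{ij}\otimes V^i_{ab}=W_{ia,jb}$.

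The main obstacle is not the diagram chase itself but the two structural ingredients feeding the bottom arrow: one needs the functoriality of the free wreath product with respect to Hopf algebra quotients of both inputs, and one needs the universal property of the common Hopf image, as sketched in the discussion immediately preceding the statement, to simultaneously realize all $n$ representations $V^i$ through a single morphism $A_s(m)^{*n}\to B^{*n}$. Both are essentially available from \cite{bi1} and \cite{bb3}, and once invoked they close the argument.
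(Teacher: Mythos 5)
Your proposal is correct and is essentially the same argument as the paper's proof of Theorem 3.8: both construct a commutative square factoring the representation of $A_s(nm)$ through $A_s(m)*_wA_s(n)$ and then through $B*_wA$, and conclude by minimality of the Hopf image. The only cosmetic difference is that you assemble the bottom arrow $A_s(m)*_wA_s(n)\to B*_wA$ directly from functoriality of the free wreath product (applied to the surjections $A_s(n)\to A$ and $A_s(m)\to B$), and separately construct $B*_wA\to B(H)\otimes B(K)$, whereas the paper first builds the composite map $\Phi:A_s(m)*_wA_s(n)\to B(H\otimes K)$ and observes that it factorizes through $B*_wA$ — these amount to the same thing, and your version spells out the two structural inputs (functoriality of $*_w$ and the simultaneous factorization via the common Hopf image) slightly more explicitly.
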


\begin{proof}
Let us first look at the free wreath product between $A_s(m)$ and $A_s(n)$. If we denote by $v,u$ the fundamental corepresentations of these algebras, the product is:
$$A_s(m)*_wA_s(n)=(A_s(m)^{*n}*A_s(n))/<[v_{ab}^{(i)},u_{ij}]=0>$$

It follows from definitions that we can define a map $\Phi:A_s(m)*_wA_s(n)\to B(H\otimes K)$, by mapping the standard generators in the following way:
\begin{eqnarray*}
\Phi(u_{ij})=U_{ij}\otimes 1\\
\Phi(v_{ab}^{(i)})=1\otimes V_{ab}^i
\end{eqnarray*}

We claim now that the representation of $A_s(nm)$ associated to $U\otimes (V^1,\ldots,V^n)$ has a factorization of the following type:
$$\begin{matrix}
A_s(nm)&&\to&&B(H\otimes K)\\
\\
\downarrow&&&&\uparrow\\
\\
A_s(m)*_wA_s(n)&\to&B*_wA&\to&B(H)\otimes B(K)
\end{matrix}$$

Indeed, we can define the bottom arrows to be those coming by factorizing $\Phi$ through the algebra $B*_wA$, and the other arrows to be the canonical ones.

Now since the representation associated to the magic unitary $U\otimes (V^1,\ldots,V^n)$ factorizes through $B*_wA$, we get a morphism as in the statement.
\end{proof}

\section{Magic bases}

We have seen in the previous section that the study of quantum permutation algebras reduces in principle to that of the magic decompositions of Hilbert spaces.

In what follows we restrict attention to the case $H=\mathbb C^n$. It is technically convenient not to choose a basis of $H$, and also to delinearise the 1-dimensional spaces of the magic decomposition, by having as starting point the following definition.

\begin{definition}
A magic basis is a square matrix of vectors $\xi\in M_n(H)$, all whose rows and columns are orthogonal bases of $H$. Associated to $\xi$ are:
\begin{enumerate}
\item The magic unitary matrix given by $P_{ij}$ =  projection on $\xi_{ij}$.

\item The representation $\pi:A_s(n)\to B(H)$ given by $\pi(u_{ij})=P_{ij}$. 

\item The quantum permutation algebra $A=A_\pi$ associated to $\pi$.
\end{enumerate}
\end{definition}

Observe that in case we have such a basis, $H$ is $n$-dimensional, so we have an isomorphism $H\simeq\mathbb C^n$. This isomorphism is not canonical.

The basic example comes from the Latin squares. These are the matrices $\Sigma\in M_n(\mathbb N)$ having the property that all the rows and columns are permutations of $1,\ldots,n$.

We denote by $\Sigma^*$ the Latin square given by $\Sigma^*_{kj}=i$ when $\Sigma_{ij}=k$. Observe that we have $\Sigma^{**}=\Sigma$, and also that we have $\Sigma^{*t}=\Sigma^{t*}$, where $t$ is the transposition.

Here is an example of pair of conjugate Latin squares:
$$\Sigma
=\begin{pmatrix}
1&2&3&4&5\\
3&1&2&5&4\\ 
4&5&1&3&2\\ 
2&4&5&1&3\\ 
5&3&4&2&1
\end{pmatrix}
\quad \quad
\Sigma^*
=\begin{pmatrix}
1&2&3&4&5\\
4&1&2&5&3\\ 
2&5&1&3&4\\ 
3&4&5&1&2\\ 
5&3&4&2&1
\end{pmatrix}$$

If $H$ is a Hilbert space given with an orthogonal basis $b_1,\ldots,b_n$ and $\Sigma\in M_n(\mathbb N)$ is a Latin square, the vectors $\xi_{ij}=b_{\Sigma_{ij}}$ form a magic basis of $H$.

We have the following result, basically proved in \cite{bni}.

\begin{theorem}
For a Latin magic basis $b_\Sigma$ we have $A=C(G)$, where $G\subset S_n$ is the group generated by the rows of $\Sigma^*$.
\end{theorem}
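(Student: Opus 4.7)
The plan is to reduce this to Theorem 2.7 by exhibiting the Latin magic basis as a magic partition decomposition. Take the Hilbert space $H$ with its given basis $b_1,\ldots,b_n$, and set $X_k=\mathbb{C}b_k$, so that $H=\bigoplus_{k=1}^n X_k$ is an orthogonal decomposition into $N=n$ one-dimensional subspaces. I want to define a magic partition $E=(E_{ij})$ of $I=\{1,\ldots,n\}$ whose associated magic decomposition $X^E$ recovers the magic decomposition coming from $b_\Sigma$.

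The natural choice is $E_{ij}=\{\Sigma_{ij}\}$. The Latin square condition says exactly that each row of $\Sigma$ is a permutation of $\{1,\ldots,n\}$, hence the singletons $\{\Sigma_{i1}\},\ldots,\{\Sigma_{in}\}$ partition $I$; similarly for columns. So $E$ is indeed a magic partition. Moreover $X^E_{ij}=X_{\Sigma_{ij}}=\mathbb{C}b_{\Sigma_{ij}}=\mathbb{C}\xi_{ij}$, so the projection onto $X^E_{ij}$ equals the projection onto $\xi_{ij}$, i.e.\ the magic unitary attached to $X^E$ coincides with the one attached to $b_\Sigma$. Consequently the associated representation $\pi:A_s(n)\to B(H)$ is the same, and its Hopf image $A$ is the same.

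Now Theorem 2.7 applies and gives $A=C(G)$, where $G\subset S_n$ is the group generated by the permutations $\sigma_1,\ldots,\sigma_n$ defined by $\sigma_k(j)=i$ whenever $k\in E_{ij}$. In our situation this condition reads $\Sigma_{ij}=k$, which by the very definition of the conjugate Latin square is equivalent to $\Sigma^*_{kj}=i$. Thus
\[
\sigma_k(j)=\Sigma^*_{kj}\qquad\text{for all }j,
\]
meaning that $\sigma_k$ is the permutation whose one-line notation is the $k$-th row of $\Sigma^*$. Hence $G$ is the subgroup of $S_n$ generated by the rows of $\Sigma^*$, which is the claim.

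There is essentially no obstacle here beyond bookkeeping: the whole content is in unpacking the definitions of $\Sigma^*$ and of ``group associated to $E$'' to match the two indexing conventions. The only point that deserves care is not confusing $\Sigma$ and $\Sigma^*$ (and not confusing rows with columns): the identity $\sigma_k(j)=\Sigma^*_{kj}$ is what pins down that it is the rows of $\Sigma^*$, and not for instance the rows of $\Sigma$ or the columns of $\Sigma^*$, that appear in the statement.
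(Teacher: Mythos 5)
Your proposal is correct and is essentially the same argument the paper uses: reduce to Theorem 2.7 via $X_k=\mathbb{C}b_k$ and $E_{ij}=\{\Sigma_{ij}\}$, then unwind $\sigma_k(j)=i\iff\Sigma_{ij}=k\iff\Sigma^*_{kj}=i$ to identify $G$ with the group generated by the rows of $\Sigma^*$. The only cosmetic difference is that you verify explicitly that $E$ is a magic partition, a step the paper leaves implicit.
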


\begin{proof}
It follows from definitions that the magic decomposition associated to $b_\Sigma$ is the magic partition decomposition $X^E$, where $X_k=\mathbb Cb_k$ and $E_{ij}=\{\Sigma_{ij}\}$. Thus we can apply Theorem 2.7, and we get $A=C(G)$, where $G$ is the group associated to $E$.

We know that we have $G=<\sigma_1,\ldots,\sigma_n>$, where $\sigma_k(j)=i$ when $k\in E_{ij}$. Together with $E_{ij}=\{\Sigma_{ij}\}$, this shows that $\sigma_k(j)$ is the unique index $i\in\{1,\ldots,n\}$ such that $\Sigma_{ij}=k$. Thus we have $\sigma_k(j)=\Sigma^*_{kj}$, so $\sigma_k$ is the $k$-th row of $\Sigma^*$, and we are done.
\end{proof}

We call a Latin square $\Sigma$ abelian if the corresponding group $G$ is abelian.

\begin{theorem}
Assume that $\pi:A_s(n)\to M_n(\mathbb C)$ comes from a magic basis.
\begin{enumerate}
\item $A$ is commutative iff $\pi$ comes from a Latin square.

\item $A$ is cocommutative iff $\pi$ comes from an abelian Latin square.
\end{enumerate}
\end{theorem}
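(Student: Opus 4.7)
The plan is to reduce both statements to previously established results: Theorem 2.8 for commutativity of $A$, Theorem 3.4 for cocommutativity, and Theorem 4.2 for the explicit description in the Latin case.

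For (1), the reverse implication is immediate from Theorem 4.2: if $\pi$ comes from a Latin square, then $A = C(G)$ with $G \subset S_n$, which is commutative. For the forward direction, suppose $A$ is commutative. Theorem 2.8 produces an orthogonal decomposition $H = \bigoplus_{k\in I} X_k$ and a magic partition $E$ of $I$ with $\mathbb{C}\xi_{ij} = X^E_{ij} = \bigoplus_{k\in E_{ij}} X_k$. Discarding those indices $k$ with $X_k = 0$, we may assume every $X_k$ is nonzero. The 1-dimensionality of $\mathbb{C}\xi_{ij}$ then forces $|E_{ij}| = 1$ and $\dim X_k = 1$ for the unique $k\in E_{ij}$; writing $E_{ij} = \{\Sigma_{ij}\}$, the requirement that each row and column of $E$ partitions $I$ forces $|I| = n$ and makes $\Sigma$ a Latin square of size $n$. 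Choosing unit vectors $b_k\in X_k$ gives $\mathbb{C}\xi_{ij} = \mathbb{C}b_{\Sigma_{ij}}$, so the representation $\pi$ (which depends only on the lines $\mathbb{C}\xi_{ij}$) is the one associated to the Latin magic basis $b_\Sigma$.

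For (2), the reverse implication again follows from Theorem 4.2: an abelian Latin square gives $A = C(G)$ with $G$ abelian, so $A \simeq C^*(\widehat G)$ is cocommutative. Conversely, assume $A$ is cocommutative. Since the $\xi_{ij}$ are nonzero basis vectors, the associated magic decomposition is automatically non-degenerate, so Theorem 3.4 applies and produces an abelian magic partition $E$ with $X = X^E$. The extraction argument of part (1) now yields a Latin square $\Sigma$ with $E_{ij} = \{\Sigma_{ij}\}$, and it remains only to identify the two groups involved. The generators of the group $G_E\subset S_n$ attached to $E$ in Theorem 2.7 satisfy $\sigma_k(j) = i$ when $k\in E_{ij}$, i.e.\ when $\Sigma_{ij} = k$; equivalently $\sigma_k(j) = \Sigma^*_{kj}$, which are exactly the rows of $\Sigma^*$ used in Theorem 4.2. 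Hence $G_E$ equals the group attached to $\Sigma$, and abelianness of $E$ is abelianness of $\Sigma$.

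The only step that needs care is the dimension bookkeeping in the forward direction of (1), namely the removal of trivial pieces $X_k = 0$ and the consequent verification that $|I| = n$. Everything else is a direct assembly of the cited results, together with the small observation that the projections $P_{ij}$, and hence $\pi$, depend only on the lines $\mathbb{C}\xi_{ij}$, so rescaling the $\xi_{ij}$ by nonzero scalars is harmless.
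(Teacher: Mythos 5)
Your proof is correct and follows the same route as the paper: part (1) via Theorem 2.8 and part (2) via Theorem 3.4 together with the automatic non-degeneracy of a magic basis decomposition, with the Latin-square characterization supplied by Theorem 4.2. The paper disposes of both directions in a single sentence each; you simply spell out the dimension bookkeeping (removing trivial summands, forcing $|E_{ij}|=1$, $\dim X_k=1$, $|I|=n$) and the identification of the two groups, which is the right level of care and matches what the paper's terse proof implicitly invokes.
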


\begin{proof}
(1) This follows from Theorem 2.8, because a magic partition decomposition into 1-dimensional subspaces is a Latin square basis.

(2) This follows from Theorem 3.4, because the magic decompositions associated to the magic partitions are non-degenerate.
\end{proof}

We discuss now the corepresentation theory of the Hopf image. 

The Gram graph of a magic basis $(\xi_{ij})$ is defined as follows: the vertices are the pairs of indices $(i,j)$, and there is an edge $(i,l)-(r,j)$ when $<\xi_{lj},\xi_{ir}>\neq 0$.

The following statement is inspired from a result of Jones in \cite{jo2}.

\begin{theorem}
The dimension of $End(u)$ is equal to the number of connected components of the Gram graph of $\xi$. Moreover, this dimension is at most $n$.
\end{theorem}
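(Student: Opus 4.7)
My plan is to apply Theorem 3.2, which identifies $\mathrm{End}(u)$ with $\mathrm{Hom}(P,P) = \{T \in M_n(\mathbb{C}) \mid TP = PT\}$, where $P = (P_{ij})$ is the magic unitary of orthogonal projections onto the lines $\mathbb{C}\xi_{ij}$. Thus the whole theorem becomes a concrete linear-algebra question about scalar $n \times n$ matrices commuting with a fixed matrix of rank-one projections.

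For the main equality, I would unwind $TP = PT$ entrywise: the $(i,j)$-entry reads $\sum_k T_{ik} P_{kj} = \sum_k T_{kj} P_{ik}$ as operators on $H$. Applying both sides to the basis vector $\xi_{lj}$ of column $j$, only the $k=l$ term survives on the left (column-$j$ orthogonality), while on the right one gets $\sum_k T_{kj} \|\xi_{ik}\|^{-2}\langle\xi_{lj},\xi_{ik}\rangle\,\xi_{ik}$. Taking the inner product with $\xi_{ir}$ and using row-$i$ orthogonality then collapses the remaining sum, leaving the scalar identity
$$T_{il}\, \langle \xi_{lj}, \xi_{ir}\rangle = T_{rj}\, \langle \xi_{lj}, \xi_{ir}\rangle.$$
When the inner product is non-zero, i.e.\ exactly when there is an edge $(i,l) - (r,j)$ in the Gram graph, this forces $T_{il} = T_{rj}$. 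Conversely, if $T$ is constant on connected components of the Gram graph, then expanding $\xi_{lj} = \sum_r \|\xi_{ir}\|^{-2}\langle\xi_{lj},\xi_{ir}\rangle\,\xi_{ir}$ in the row-$i$ basis shows the two operators agree on each column-$j$ basis vector, hence everywhere. This identifies $\mathrm{End}(u)$ with the space of matrices constant on the connected components of the Gram graph, so its dimension equals the number of such components.

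For the bound $\dim \mathrm{End}(u) \leq n$, the key step is to show that every vertex of the Gram graph is joined by a single edge to some vertex of the first row $(1,1),\ldots,(1,n)$. Given $(p,q)$, the vector $\xi_{p1}$ is non-zero (magic-basis rows and columns are genuine bases of the $n$-dimensional space $H$), and it expands in the orthogonal row-$q$ basis $\{\xi_{qs}\}_s$; at least one coefficient $\langle \xi_{p1}, \xi_{qs}\rangle$ is non-zero, which is exactly the edge condition for $(p,q) - (1,s)$. Hence every connected component meets the first row, and the number of components is at most $n$. I do not expect a serious obstacle; the only subtle point is bookkeeping in the edge definition, so that the test-vector/inner-product computation produces the correct Gram-graph edge rather than its transpose.
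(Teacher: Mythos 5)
Your proof is correct and follows essentially the same route as the paper: reduce to $\mathrm{End}(P)$ via Theorem 3.2, apply $TP=PT$ to the column-$j$ basis vectors, pair against $\xi_{ir}$ to extract the scalar condition $(T_{il}-T_{rj})\langle\xi_{lj},\xi_{ir}\rangle=0$, and read off that $T$ must be constant on components of the Gram graph. The one place you go beyond the paper's text is the bound: the paper simply asserts ``the second one follows from it,'' whereas you supply the missing (and correct) argument that every vertex $(p,q)$ is adjacent to a first-row vertex $(1,s)$ because the nonzero vector $\xi_{p1}$ must have a nonzero coefficient in the orthogonal row-$q$ basis, so each connected component meets the $n$-element first row. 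That extra step is a genuine improvement in completeness, though not a different method.
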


\begin{proof}
We use Theorem 3.2. For an operator $T=(t_{ij})$, we have:
\begin{eqnarray*}
T\in End(u)
&\iff&T\in End(P)\\
&\iff&\sum_kt_{ik}P_{kj}=\sum_kP_{ik}t_{kj}\\
&\iff&t_{il}\xi_{lj}=\sum_kt_{kj}<\xi_{lj},\xi_{ik}>\xi_{ik}\\
&\iff&t_{il}<\xi_{lj},\xi_{ir}>=t_{rj}<\xi_{lj},\xi_{ir}>\\
&\iff&(t_{il}-t_{rj})<\xi_{lj},\xi_{ir}>=0
\end{eqnarray*}

In terms of the Gram graph, this shows that the condition $T\in End(u)$ is equivalent to the collection of conditions $t_{il}=t_{rj}$, one for each edge $(i,l)-(r,j)$. 

In other words, the entries of $T$ must be constant over the connected components of the Gram graph, and this gives the first result. The second one follows from it.
\end{proof}

For the computation of higher commutants, the idea is to improve Theorem 3.2, by using the following magic basis-specific notions.

\begin{definition}
Associated to a magic basis $\xi_{ij}\in M_n(H)$ are:
\begin{enumerate}
\item The Gram matrix, $G_{ia}^{jb}=<\xi_{ij},\xi_{ab}>$.

\item The higher Gram matrices, $G^k_{i_1\ldots i_k,j_1\ldots j_k}=G_{i_ki_{k-1}}^{j_kj_{k-1}}\ldots G_{i_2i_1}^{j_2j_1}$.
\end{enumerate}
\end{definition}

Observe that we have $G^k\in M_{n^k}(\mathbb C)$. Observe also that $G$ is equal to the first higher Gram matrix, namely $G^2$, but only after a permutation of the indices:
$$G_{ia}^{jb}=G^2_{ai,bj}$$

As a first example, for a basis $\xi=b_\Sigma$ coming from a Latin square, we have:
\begin{eqnarray*}
G_{ia}^{jb}
&=&<\xi_{ij},\xi_{ab}>\\
&=&<b_{\Sigma_{ij}},b_{\Sigma_{ab}}>\\
&=&\delta_{\Sigma_{ij},\Sigma_{ab}}
\end{eqnarray*}

As for the higher Gram matrices, these are given by:
\begin{eqnarray*}
G^k_{ij}
&=&G_{i_ki_{k-1}}^{j_kj_{k-1}}\ldots G_{i_2i_1}^{j_2j_1}\\
&=&\delta(\Sigma_{i_kj_k},\Sigma_{i_{k-1}j_{k-1}})\ldots\delta(\Sigma_{i_2j_2},\Sigma_{i_1j_1})\\
&=&\delta(\Sigma_{i_kj_k},\ldots,\Sigma_{i_1j_1})
\end{eqnarray*}

Here we use generalized Kronecker symbols, for multi-indices. These are by definition given by $\delta(i)=1$ if all the indices of $i$ are equal, and $\delta(i)=0$ if not.

\begin{theorem}
We have the formula
$$Hom(u^{\otimes k},u^{\otimes l})=\{T|T^\circ G^{k+2}=G^{l+2}T^\circ\}$$
where we use the notation $T^\circ=1\otimes T\otimes 1$.
\end{theorem}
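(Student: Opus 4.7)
The plan is to apply Theorem 3.2 to reduce the problem to the operator identity $TP^{\otimes k}=P^{\otimes l}T$, and then to compute both sides by taking matrix coefficients in the magic basis itself. By Theorem 3.2, $T\in Hom(u^{\otimes k},u^{\otimes l})$ is equivalent to the componentwise identities
$$\sum_K T_{IK}\,P_{K_1J_1}\cdots P_{K_kJ_k}=\sum_M P_{I_1M_1}\cdots P_{I_lM_l}\,T_{MJ}$$
in $B(H)$, for all multi-indices $I\in\{1,\ldots,n\}^l$ and $J\in\{1,\ldots,n\}^k$. Since each row of $\xi$ is an orthogonal basis of $H$, the family $\{\xi_{ab}\}$ spans $H$, so testing on the matrix coefficients $\langle\xi_{cd},(\cdot)\xi_{ab}\rangle$ as $a,b,c,d$ vary is equivalent to the operator identity itself.

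The key calculation uses that each $P_{ij}$ is the rank-one projection on $\xi_{ij}$, so after normalizing (without loss of generality, since projections are unchanged) one has
$$P_{K_1J_1}\cdots P_{K_kJ_k}=|\xi_{K_1J_1}\rangle\langle \xi_{K_kJ_k}|\,\prod_{r=1}^{k-1}\langle \xi_{K_rJ_r},\xi_{K_{r+1}J_{r+1}}\rangle.$$
Sandwiching against $\xi_{cd}$ and $\xi_{ab}$ contributes two further inner products, so the matrix coefficient becomes a telescoping product of exactly $k+1$ Gram entries. By Definition 4.5 this is precisely an entry of $G^{k+2}$, with the boundary indices $a,c$ (respectively $b,d$) occupying the outer positions of the multi-index. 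Substituting back, the componentwise operator identity transforms into the scalar equation
$$\sum_K T_{IK}\,G^{k+2}_{(a,K,c),(b,J,d)}=\sum_M G^{l+2}_{(a,I,c),(b,M,d)}\,T_{MJ}$$
(after absorbing into dummy variables any multi-index reversal dictated by the ordering convention in Definition 4.5).

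Finally, one recognizes the left-hand side as the $((a,I,c),(b,J,d))$-entry of $T^\circ G^{k+2}$, and the right-hand side as the corresponding entry of $G^{l+2}T^\circ$: the identity factors in $T^\circ=1\otimes T\otimes 1$ act trivially on the boundary slots $a,c$ (respectively $b,d$), while $T$ contracts the central slot against that of the higher Gram matrix. Since the operator identity and this scalar system are equivalent, this proves $Hom(u^{\otimes k},u^{\otimes l})=\{T\mid T^\circ G^{k+2}=G^{l+2}T^\circ\}$.

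The principal difficulty is notational rather than conceptual: one must track carefully how the $k+1$ Gram factors arising from $\langle\xi_{cd},(P^{\otimes k})_{IJ}\xi_{ab}\rangle$ reassemble into a single entry of $G^{k+2}$, subject to the reversal convention of Definition 4.5, and check that the two boundary slots introduced by the test vectors $\xi_{ab},\xi_{cd}$ correspond precisely to the identity factors of $T^\circ=1\otimes T\otimes 1$. Once this index matching is secured, both implications of the equivalence follow at once, since each step in the argument is reversible.
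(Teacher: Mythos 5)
Your argument is correct and follows essentially the same route as the paper's proof: invoke Theorem 3.2 to reduce to $TP^{\otimes k}=P^{\otimes l}T$, take matrix coefficients against the spanning family $\xi_{ab},\xi_{cd}$, telescope the product of rank-one projections into a chain of Gram inner products, and recognize the two boundary slots as the identity factors in $T^\circ=1\otimes T\otimes 1$. Your explicit remarks on normalizing the $\xi_{ij}$ and on the index-reversal convention of Definition 4.5 are exactly the bookkeeping the paper handles silently, so the proposal matches the intended proof in substance and in structure.
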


\begin{proof}
With the notations in Theorem 3.2, we have the following formula:
$$Hom(u^{\otimes k},u^{\otimes l})=Hom(P^{\otimes k},P^{\otimes l})$$

The vector space on the right consists by definition of the complex $n^l\times n^k$ matrices $T$, satisfying the following relation:
$$TP^{\otimes k}=P^{\otimes l}T$$ 

If we denote this equality by $L=R$, the left term $L$ is given by:
\begin{eqnarray*}
L_{ij}
&=&(TP^{\otimes k})_{ij}\\
&=&\sum_aT_{ia}P^{\otimes k}_{aj}\\
&=&\sum_aT_{ia}P_{a_1j_1}\ldots P_{a_kj_k}
\end{eqnarray*}

As for the right term $R$, this is given by:
\begin{eqnarray*}
R_{ij}
&=&(P^{\otimes l}T)_{ij}\\
&=&\sum_bP^{\otimes l}_{ib}T_{bj}\\
&=&\sum_bP_{i_1b_1}\ldots P_{i_lb_l}T_{bj}
\end{eqnarray*}

Since the elements of $\xi$ span the ambient Hilbert space, the equality $L=R$ is equivalent to the following equality:
$$<L_{ij}\xi_{pq},\xi_{rs}>=<R_{ij}\xi_{pq},\xi_{rs}>$$

In order to compute these quantities, we can use the following well-known formula, expressing a product of rank one projections $P_1,\ldots,P_k$ in terms of the corresponding image vectors $\xi_1,\ldots,\xi_k$:
$$<P_1\ldots P_kx,y>=<x,\xi_k><\xi_k,\xi_{k-1}>\ldots<\xi_2,\xi_1><\xi_1,y>$$

This gives the following formula for $L$:
\begin{eqnarray*}
<L_{ij}\xi_{pq},\xi_{rs}>
&=&\sum_aT_{ia}<P_{a_1j_1}\ldots P_{a_kj_k}\xi_{pq},\xi_{rs}>\\
&=&\sum_aT_{ia}<\xi_{pq},\xi_{a_kj_k}>\ldots<\xi_{a_1j_1},\xi_{rs}>\\
&=&\sum_aT_{ia}G_{pa_k}^{qj_k}G_{a_ka_{k-1}}^{j_kj_{k-1}}\ldots G_{a_2a_1}^{j_2j_1}G_{a_1r}^{j_1s}\\
&=&\sum_aT_{ia}G^{k+2}_{rap,sjq}\\
&=&(T^\circ G^{k+2})_{rip,sjq}
\end{eqnarray*}

As for the right term $R$, this is given by:
\begin{eqnarray*}
<R_{ij}\xi_{pq},\xi_{rs}>
&=&\sum_b<P_{i_1b_1}\ldots P_{i_lb_l}\xi_{pq},\xi_{rs}>T_{bj}\\
&=&\sum_b<\xi_{pq},\xi_{i_lb_l}>\ldots<\xi_{i_1b_1},\xi_{rs}>T_{bj}\\
&=&\sum_bG_{pi_l}^{qb_l}G_{i_li_{l-1}}^{b_lb_{l-1}}\ldots G_{i_2i_1}^{b_2b_1}G_{i_1r}^{b_1s}T_{bj}\\
&=&\sum_bG^{l+2}_{rip,sbq}T_{bj}\\
&=&(G^{l+2}T^\circ)_{rip,sjq}
\end{eqnarray*}

This gives the formula in the statement.
\end{proof}

As a first application, we will solve now the tensor product problem. A tensor product of two magic bases $\xi=\eta\otimes\rho$ is by definition given by $\xi_{ia,jb}=\eta_{ij}\otimes\rho_{ab}$.

\begin{theorem}
The Hopf algebra associated to a tensor product $\xi=\eta\otimes\rho$ is given by $A=B\otimes C$, where $B,C$ are the Hopf algebras associated to $\eta,\rho$.
\end{theorem}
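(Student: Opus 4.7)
The plan is to upgrade the one-sided containment of Theorem 3.5 to an equality. By Theorem 3.5, applied to the magic unitaries $U,V$ associated to $\eta, \rho$, there is a surjective morphism of quantum permutation algebras $\Phi: B \otimes C \to A$ sending $v_{ij} \otimes w_{ab} \mapsto u_{(i,a),(j,b)}$. What remains is to show that $\Phi$ is injective.

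By Tannakian duality \cite{wo2}, this injectivity will follow from equality of the Hom-spaces of the fundamental corepresentations at every level, namely
\[
Hom_A(u^{\otimes k}, u^{\otimes l}) = Hom_{B \otimes C}\bigl((v \otimes w)^{\otimes k}, (v \otimes w)^{\otimes l}\bigr).
\]
On the $B \otimes C$ side, standard compact quantum group theory yields the tensor factorization of Hom-spaces over a product of compact quantum groups:
\[
Hom_{B \otimes C}\bigl((v \otimes w)^{\otimes k}, (v \otimes w)^{\otimes l}\bigr) \simeq Hom_B(v^{\otimes k}, v^{\otimes l}) \otimes Hom_C(w^{\otimes k}, w^{\otimes l}).
\]

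For the $A$ side I would apply Theorem 4.6. The defining inner products split as $<\xi_{ia,jb}, \xi_{kc,ld}> = <\eta_{ij}, \eta_{kl}>\cdot<\rho_{ab}, \rho_{cd}>$, so the Gram matrix of $\xi$ factors as $G^\xi \simeq G^\eta \otimes G^\rho$ under the natural identification of $(\mathbb{C}^{nm})^{\otimes r}$ with $(\mathbb{C}^n)^{\otimes r} \otimes (\mathbb{C}^m)^{\otimes r}$. Iterating the recursive definition of higher Gram matrices (Definition 4.5) then propagates this to $G^{\xi,r} = G^{\eta,r} \otimes G^{\rho,r}$ for every $r$. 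Theorem 4.6 identifies $Hom_A(u^{\otimes k}, u^{\otimes l})$ with the space of operators $T$ whose flanking $T^\circ = 1 \otimes T \otimes 1$ intertwines the tensor-product Gram matrices $G^{\eta,k+2} \otimes G^{\rho,k+2}$ and $G^{\eta,l+2} \otimes G^{\rho,l+2}$.

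The main step, and where I expect the central technical obstacle to lie, is the decomposition of such tensor-product intertwiners as tensor products of $\eta$- and $\rho$-intertwiners. The inclusion from $B \otimes C$ into $A$ is already supplied by $\Phi$, so the task reduces to a matching of dimensions. Combining Theorem 4.6 for $\eta$ and $\rho$ separately with the tensor factorization of the Gram matrices, and carefully tracking how the flanking operation $T \mapsto 1 \otimes T \otimes 1$ interacts with the two index families (the $\eta$-indices sit on the outer positions and the $\rho$-indices on the inner ones, or vice versa, after the reindexing), should yield the required dimension count. Once the Hom-spaces agree at every tensor level, Tannakian duality forces $\Phi$ to be an isomorphism, completing the proof.
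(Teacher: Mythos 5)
Your proposal follows essentially the same route as the paper's proof: get the surjection $B\otimes C\to A$ from Theorem 3.5, factor the Gram matrices as $G^k=H^k\otimes L^k$, apply Theorem 4.6, and close via Tannakian recognition. You correctly flag the genuinely delicate point — passing from ``$T^\circ$ intertwines $H^{k+2}\otimes L^{k+2}$ and $H^{l+2}\otimes L^{l+2}$'' to ``$T$ lies in $Hom(v^{\otimes k},v^{\otimes l})\otimes Hom(w^{\otimes k},w^{\otimes l})$'' — which the paper itself disposes of in one line as ``standard linear algebra identifications,'' so your proposal is no less complete than the source; it would be worth noting, though, that the one-sided inclusion you already have from $\Phi$ makes only one direction of this identification actually need proof, and that the concluding Tannakian step also requires the observation (present in the paper) that $1\subset v$ and $1\subset w$, so that the coefficient algebra of the even tensor powers of $v\otimes w$ is all of $B\otimes C$.
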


\begin{proof}
We already know from Theorem 3.5 that we have a morphism $B\otimes C\to A$. The point is that, by Tannakian duality, this morphism is injective. Consider indeed the Gram matrices $H,L$ for $\eta,\rho$. Then the Gram matrix of $\xi$ is given by:
\begin{eqnarray*}
G_{ia,IA}^{jb,JB}
&=&<\xi_{ia,jb},\xi_{IA,JB}>\\
&=&<\eta_{ij}\otimes\rho_{ab},\eta_{IJ}\otimes\rho_{AB}>\\
&=&<\eta_{ij},\eta_{IJ}><\rho_{ab},\rho_{AB}>\\
&=&H_{iI}^{jJ}L_{aA}^{bB}
\end{eqnarray*}

Thus the higher Gram matrices of $\xi$ are given by:
\begin{eqnarray*}
G^k_{i_1a_1\ldots i_ka_k,j_1b_1\ldots j_kb_k}
&=&G_{i_ka_k,i_{k-1}a_{k-1}}^{j_kb_k,j_{k-1}b_{k-1}}\ldots G_{i_2a_2,i_1a_1}^{j_2b_2,j_1b_1}\\
&=&H_{i_ki_{k-1}}^{j_kj_{k-1}}L_{a_ka_{k-1}}^{b_kb_{k-1}}\ldots H_{i_2i_1}^{j_2j_1}L_{a_2a_1}^{b_2b_1}\\
&=&H_{i_ki_{k-1}}^{j_kj_{k-1}}\ldots H_{i_2i_1}^{j_2j_1}L_{a_ka_{k-1}}^{b_kb_{k-1}}\ldots L_{a_2a_1}^{b_2b_1}\\
&=&H^k_{i_1\ldots i_k,j_1\ldots j_k}L^k_{a_1\ldots a_k,b_1\ldots b_k}
\end{eqnarray*}

In other words, we have the following equality:
$$G^k=H^k\otimes L^k$$

Now by applying Theorem 4.6, and by using some standard linear algebra indentifications, we get:
\begin{eqnarray*}
End(u^{\otimes k})
&=&\{T|1\otimes T\otimes 1\in (G^{k+2})'\}\\
&=&\{T|1\otimes T\otimes 1\in (H^{k+2})'\otimes (L^{k+2})'\}\\
&=&End((v\otimes w)^{\otimes k})
\end{eqnarray*}

Here $v,w$ are respectively the magic unitary matrices of $B,C$. Now by a standard argument, this equality shows that the morphism $B\otimes C\to A$ is injective on the algebra of coefficients of the even powers of $v\otimes w$. Since we have $1\in v$, $1\in w$, this subalgebra of coefficients is the tensor product itself, and we are done.
\end{proof}

We discuss now the classification problem, for small values of $n$. At $n\leq 3$ it follows from Theorem 2.9 and Theorem 2.10 that the only magic basis is the circular one, and that the corresponding algebra is $C(\mathbb Z_n)$. At $n=4$ we have the following question.

\begin{problem}
What are the magic bases of $\mathbb C^4$, and what are the corresponding Hopf algebras?
\end{problem}

A large class of examples of such magic bases, which altogether provide a faithful representation of the algebra $A_s(4)$, comes from the Pauli matrices. See \cite{bc2}. We don't know if we get in this way all the magic bases at $n=4$.

As for the corresponding Hopf algebras, these are all quotients of $A_s(4)$, so they are subject to the ADE classification result in \cite{bb2}. However, even in the case of the magic bases coming from the Pauli matrices, where some partial results are available \cite{bb3}, \cite{bni}, we don't know exactly how to perform the computation in the general case.

Summarizing, the above problem seems to be of great importance in connection with the previous considerations in \cite{bb2}, \cite{bb3}, \cite{bc2}, \cite{bni}, and its answer would be probably a kind of ultimate result regarding the algebra $A_s(4)$ and its quotients.

\section{Hadamard matrices}

In the reminder of this paper we study the magic bases and the corresponding representations of $A_s(n)$ coming from the complex Hadamard matrices. Most of the preliminary material in this sense can be found as well in the recent paper \cite{bni}.

\begin{definition}
A complex Hadamard matrix is a square matrix $h\in M_n(\mathbb C)$ whose entries are on the unit circle, and whose rows are pairwise orthogonal.
\end{definition}

It follows from definitions that the columns are pairwise orthogonal as well.

These matrices appeared in a paper of Popa, who discovered that a unitary matrix $h\in M_n(\mathbb C)$ is a multiple of a complex Hadamard matrix if and only if the orthogonal MASA condition $\Delta\perp h\Delta h^*$ is satisfied, where $\Delta\subset M_n(\mathbb C)$ is the algebra of diagonal matrices \cite{po1}. Such a pair of orthogonal MASA's produces a commuting square, and the commuting squares are in turn known to classify the finite depth subfactors \cite{po2}.

Due to this fact, the classification problem for the complex Hadamard matrices, and the computation of the corresponding algebraic invariants, quickly became key problems in operator algebras. See Haagerup \cite{ha1}, Jones \cite{jo2} and the book \cite{jsu}. 

For some recent investigations, originating somehow from the same circle of ideas, see Grossman and Jones \cite{gjo}. For a discussion of certain arithmetic aspects, involving arbitrary fields instead of $\mathbb C$, see Bacher, de la Harpe and Jones \cite{bhj}.

The difficulty in the study of complex Hadamard matrices comes from the fact that there is only one basic example, namely the Fourier matrix.

\begin{definition}
The Fourier matrix is $F_n=w^{(i-1)(j-1)}$, where $w=e^{2\pi i/n}$.
\end{definition}

The terminology comes from the fact that $F_n$ is the matrix of the discrete Fourier transform, over the cyclic group $\mathbb Z_n$. We will come back later to this fact, with the remark that the quantum group associated to $F_n$ is indeed $\mathbb Z_n$.

Here are the first three Fourier matrices, with the notation $j=e^{2\pi i/3}$: 
$$F_2=\begin{pmatrix}
1&1\\
1&-1
\end{pmatrix}\quad\quad 
F_3=\begin{pmatrix}
1&1&1\\ 
1&j&j^2\\
1&j^2&j
\end{pmatrix}\quad\quad 
F_4=\begin{pmatrix}
1&1&1&1\\ 
1&i&-1&-i\\
1&-1&1&-1\\
1&-i&-1&i
\end{pmatrix}$$

Observe that $F_n$ has the property that its first row and column consist only of $1$'s. This is due to the exponent $(i-1)(j-1)$ instead of $ij$, in the above definition.

This normalization can be in fact always done, up to equivalence.

\begin{definition}
Let $h,k$ be two complex Hadamard matrices.
\begin{enumerate}
\item $h$ is called dephased if its first row and column consist only of $1$'s.

\item $h,k$ are called equivalent if one can pass from one to the other by permuting the rows or columns, or by multiplying them by complex numbers of modulus $1$.
\end{enumerate}
\end{definition}

Observe that any complex Hadamard matrix can be supposed to be in dephased form, up to the above equivalence relation. With a few exceptions, we will do so.

Note that we do not include the transposition in the above operations. This is because at the level of associated Hopf algebras, the transposition corresponds to a highly non-trivial operation, making correspond for instance algebras of type $A*_wB$ to algebras of type $B*_wA$. See section 11 below for a concrete such example.

One can prove that at $n=2,3$ the Fourier matrix is the only complex Hadamard matrix, modulo equivalence. At $n=4$ we have the following general example, depending on a complex parameter on the unit circle, $|q|=1$:
$$F_{22}^q
=\begin{pmatrix}
1&1&1&1\\
1&q&-1&-q\\
1&-1&1&-1\\ 
1&-q&-1&q
\end{pmatrix}$$

The notation comes from the fact that at $q=1$ we get a matrix which is equivalent to $F_2\otimes F_2$. Observe also that at $q=i$ we get a matrix which is equivalent to $F_4$. 

At $n=5$ we have the Fourier matrix, based on the root of unity $w=e^{2\pi i/5}$: 
$$F_5=\begin{pmatrix} 
1&1&1&1&1\\  
1&w&w^2&w^3&w^4\\  
1&w^2&w^4&w&w^3\\ 
1&w^3&w&w^4&w^2\\ 
1&w^4&w^3&w^2&w
\end{pmatrix}$$

The following remarkable result is due to Haagerup \cite{ha1}.

\begin{theorem}
At $n=2,3,4,5$ the above matrices $F_2,F_3,F_{22}^q,F_5$ are the only complex Hadamard matrices, modulo equivalence.
\end{theorem}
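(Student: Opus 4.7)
The plan is to put each complex Hadamard matrix into dephased form using the equivalence relation, and then extract algebraic constraints from the row orthogonality relations. For $n=2$, the relation $1+h_{22}=0$ forces $h_{22}=-1$ and $h=F_2$. For $n=3$, after dephasing the four entries $h_{ij}$ with $i,j\geq 2$ lie on the unit circle and satisfy $1+h_{i2}+h_{i3}=0$ for each $i\geq 2$; since three unit complex numbers sum to zero iff they are a rotation of the three cube roots of unity, and the normalisation fixes the rotation, one reads off $h=F_3$.

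For $n=4$, after dephasing, orthogonality of row~$1$ with row~$i\geq 2$ reads $1+h_{i2}+h_{i3}+h_{i4}=0$ with each $|h_{ij}|=1$. The elementary fact that four unit complex numbers sum to zero only when they split into two cancelling pairs $\{a,-a,b,-b\}$ imposes a discrete pairing choice in each row. A case analysis combining these pairings with the column orthogonalities and the three remaining row-pair orthogonalities (re-dephasing when useful) leaves exactly one continuous parameter $q\in\mathbb T$ and produces the family $F_{22}^q$.

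The substantial case is $n=5$, for which I would follow Haagerup's original strategy. Introduce the cross-ratios $\lambda(i,k\mid j,l):=h_{ij}h_{kl}\overline{h_{il}h_{kj}}$ for $1\leq i<k\leq 5$ and $1\leq j<l\leq 5$; these are unit complex numbers, and they are invariant under the dephasing/equivalence action. Fixing a column $l$ and dividing the orthogonality relation between rows $i,k$ by $h_{il}\overline{h_{kl}}$ rewrites that relation as a sum $1+\lambda_1+\lambda_2+\lambda_3+\lambda_4=0$ of five unit cross-ratios. Haagerup's key algebraic lemma is that any quintuple of unit complex numbers summing to zero which is simultaneously compatible with the global cross-ratio identities forced by a $5\times 5$ Hadamard matrix must be a phase-rotation of the five fifth-roots of unity. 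Once every cross-ratio is shown to be a fifth root of unity, a rescaling of rows and columns identifies $h$ with $F_5$.

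The main obstacle is precisely this $n=5$ step. Unlike at $n=4$, there is no elementary geometric classification of five unit vectors summing to zero, and a continuous family of solutions exists for each individual row-pair; one must genuinely exploit the additional structure carried by a Hadamard matrix, namely the consistency of the cross-ratios $\lambda(i,k\mid j,l)$ across all choices of the four indices, to rule out anything other than fifth roots of unity. The other cases are essentially bookkeeping once this algebraic rigidity is established.
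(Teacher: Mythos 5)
The paper itself offers no proof of this statement: it is Haagerup's classification theorem, recorded here with a citation to \cite{ha1} and nothing more. So there is no ``paper's own proof'' to compare against, and your proposal supplies strictly more detail than the text does.

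On the merits: the arguments you sketch for $n=2,3,4$ are sound. The geometric lemma underlying $n=3$ (three unit vectors summing to zero are a rotation of the cube roots of unity) is elementary and correct, and the one you invoke for $n=4$ --- that four unit complex numbers summing to zero necessarily split into two antipodal pairs $\{a,-a,b,-b\}$ --- is also true (e.g.\ one checks $1+u+v+u\bar v=(1+u\bar v)(1+v)$, so either two entries are opposite or one equals $-1$), and it correctly reduces the $n=4$ classification to finite combinatorics plus one circle parameter $q$, yielding $F_{22}^q$.

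The genuine gap is exactly where you flag it, at $n=5$. Your ``key algebraic lemma'' --- that the cross-ratios $\lambda(i,k\mid j,l)$ of a $5\times 5$ dephased Hadamard matrix are forced to be fifth roots of unity --- \emph{is} the theorem in this case; stating it is not proving it. Haagerup's actual argument requires substantial work: after fixing two rows, he derives from the compatibility of the cross-ratio identities a system of polynomial constraints on the unit circle, and the rigidity (no continuous deformation, all phases fifth roots of unity) is extracted by a careful analysis of that system. None of that analysis appears in your proposal, and unlike the $n\le 4$ cases there is no short geometric shortcut: a single orthogonality relation $1+\lambda_1+\lambda_2+\lambda_3+\lambda_4=0$ admits a two-parameter continuous family of solutions, and it is only the interplay of all ten such relations (equivalently, the cross-ratio consistency across all index choices) that kills the parameters. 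As written, your $n=5$ case is a correct description of the strategy rather than a proof, so the proposal as a whole is incomplete. Since the paper takes the theorem on faith from \cite{ha1}, this is not a defect relative to the paper, but it is a defect relative to the ambition of giving an actual proof.
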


At $n=6$ the situation is much more complicated. First, we have the Fourier matrix, based on the root of unity $w=-j^2$, where $j=e^{2\pi i/3}$:
$$F_6=\begin{pmatrix}
1&1&1&1&1&1\\
1&-j^2&j&-1&j^2&-j\\
1&j&j^2&1&j&j^2\\ 
1&-1&1&-1&1&-1\\
1&j^2&j&1&j^2&j\\
1&-j&j^2&-1&j&-j^2
\end{pmatrix}$$

As it was the case with $F_4$, this matrix can be deformed, with the space of parameters consisting this time of twice the product of the unit circle with itself. This deformation appears as particular case of a quite general construction, to be discussed later on.

A first matrix which is not equivalent to $F_6$, nor to its deformations, is the Tao matrix \cite{tao}, based on the root of unity $j=e^{2\pi i/3}$:
$$T=\begin{pmatrix}
1&1&1&1&1&1\\ 
1&1&j&j&j^2&j^2\\ 
1&j&1&j^2&j^2&j\\
1&j&j^2&1&j&j^2\\ 
1&j^2&j^2&j&1&j\\ 
1&j^2&j&j^2&j&1
\end{pmatrix}$$

Another remarkable example, this time depending on a complex parameter $|q|=1$, is the following matrix, constructed in \cite{ha1} at $q=1$, and in \cite{di1} for any $|q|=1$:
$$H^q=\begin{pmatrix}
1&1&1&1&1&1\\
1&-1&i&i&-i&-i\\ 
1&i&-1&-i&q&-q\\ 
1&i&-i&-1&-q&q\\
1&-i&\bar{q}&-\bar{q}&i&-1\\ 
1&-i&-\bar{q}&\bar{q}&-1&i
\end{pmatrix}$$

Yet another example, this time with circulant structure, is the Bj\"orck-Fr\"oberg matrix \cite{bfr}, built by using one of the two roots of $a^2-(1-\sqrt{3})a+1=0$:
$$BF=\begin{pmatrix}
1&ia&-a&-i&-\bar{a}&i\bar{a}\\
i\bar{a}&1&ia&-a&-i&-\bar{a}\\
-\bar{a}&i\bar{a}&1&ia&-a&-i\\
-i&-\bar{a}&i\bar{a}&1&ia&-a\\
-a&-i&-\bar{a}&i\bar{a}&1&ia\\
ia&-a&-i&-\bar{a}&i\bar{a}&1
\end{pmatrix}$$

The classification problem is open at $n=6$, where a certain number of results are available \cite{bb+}, \cite{msz}, \cite{szo}. The main result so far concerns the self-adjoint case \cite{ben}.

At $n=7$ we have the following matrix, discovered by Petrescu \cite{pet}:
$$P^q
=\begin{pmatrix}
1&1&1&1&1&1&1\\
1&qw&qw^4&w^5&w^3&w^3&w\\
1&qw^4&qw&w^3&w^5&w^3&w\\
1&w^5&w^3&\bar{q}w&\bar{q}w^4&w&w^3\\
1&w^3&w^5&\bar{q}w^4&\bar{q}w&w&w^3\\
1&w^3&w^3&w&w&w^4&w^5\\
1&w&w&w^3&w^3&w^5&w^4
\end{pmatrix}$$

Here $w=e^{2\pi i/6}$. This matrix, a non-trivial deformation of prime order, was found by using a computer program, and came as a big surprise at the time of \cite{pet}.

At $n=7$, or bigger, very less seems to be known. A number of abstract or concrete results here are available from \cite{bur}, \cite{di2}, \cite{ha2}, \cite{mrs}, \cite{nic}, \cite{tz1}, \cite{tz2}.

\section{Symmetry algebras}

We will associate now a quantum permutation algebra to any complex Hadamard matrix. Let $h\in M_n(\mathbb C)$ be such a matrix, and denote its rows by $h_1,\ldots,h_n$. The entries of $h$ being elements on the unit cercle, they are invertible. Thus $h_1,\ldots,h_n$ can be regarded as being invertible elements of the algebra $\mathbb C^n$.

\begin{proposition}
The vectors $\xi_{ij}=h_i/h_j$ form a magic basis of $\mathbb C^n$.
\end{proposition}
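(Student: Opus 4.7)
The plan is to verify directly the two defining properties of a magic basis: that each row $(\xi_{i1},\ldots,\xi_{in})$ and each column $(\xi_{1j},\ldots,\xi_{nj})$ of the matrix $(\xi_{ij})$ is an orthogonal basis of $\mathbb{C}^n$. Since every $\xi_{ij}$ is a vector in $\mathbb{C}^n$ whose entries have modulus $1$ (hence are nonzero), each $\xi_{ij}$ has norm $\sqrt{n}$, so once orthogonality within a row or column is established the linear independence of $n$ vectors in an $n$-dimensional space upgrades automatically to a basis. Thus the proof reduces to two inner product computations.

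First I would fix $i$ and compute $\langle \xi_{ij},\xi_{ik}\rangle$ for $j\neq k$. Writing the division componentwise, the $l$-th entry of $\xi_{ij}$ is $h_{il}/h_{jl}$, and using $|h_{kl}|=1$ to replace reciprocals by conjugates, the inner product becomes
$$\langle \xi_{ij},\xi_{ik}\rangle=\sum_l\frac{h_{il}}{h_{jl}}\,\overline{\frac{h_{il}}{h_{kl}}}=\sum_l\frac{|h_{il}|^2}{h_{jl}\overline{h_{kl}}}=\sum_l h_{kl}\overline{h_{jl}}=\langle h_k,h_j\rangle,$$
which vanishes by the Hadamard row-orthogonality assumption. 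This gives orthogonality of the $i$-th row of $(\xi_{ij})$.

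For the column orthogonality, I would fix $j$ and compute $\langle \xi_{ij},\xi_{kj}\rangle$ for $i\neq k$; the denominators now cancel directly since they depend only on $j$:
$$\langle \xi_{ij},\xi_{kj}\rangle=\sum_l\frac{h_{il}\overline{h_{kl}}}{|h_{jl}|^2}=\sum_l h_{il}\overline{h_{kl}}=\langle h_i,h_k\rangle=0.$$
There is no genuine obstacle here; the only thing to observe is that column orthogonality of $(\xi_{ij})$ reduces again to row (not column) orthogonality of $h$, so one does not even need to invoke the fact that the columns of $h$ are automatically orthogonal as well. Both families thus consist of $n$ pairwise orthogonal nonzero vectors in $\mathbb{C}^n$, and hence form orthogonal bases, so $(\xi_{ij})$ is a magic basis.
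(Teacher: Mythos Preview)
Your proof is correct and follows essentially the same approach as the paper: a direct computation of the inner products $\langle\xi_{ij},\xi_{kj}\rangle$ and $\langle\xi_{ij},\xi_{ik}\rangle$, each reducing to the Hadamard row-orthogonality condition. The paper carries out only the column case explicitly and dismisses the row case with ``a similar computation works''; your write-up is a bit more explicit, and your remark that both cases reduce to \emph{row} orthogonality of $h$ (so column orthogonality of $h$ is never invoked) is a nice observation not made in the paper.
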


\begin{proof}
The Hadamard condition tells us that the scalar products between the rows of $h$ are given by $<h_i,h_j>=n\,\delta_{ij}$. Thus the scalar product between two vectors on the same column of $\xi$ is given by:
\begin{eqnarray*}
<\xi_{ij},\xi_{kj}>
&=&<h_i/h_j,h_k/h_j>\\
&=&n<h_i,h_k>\\
&=&n^2\,\delta_{ik}
\end{eqnarray*}

A similar computation works for the rows, and we are done.
\end{proof}

We can therefore apply the general constructions in section 4. It is convenient to write down the definition of all objects involved.

\begin{definition}
Let $h\in M_n(\mathbb C)$ be a complex Hadamard matrix.
\begin{enumerate}
\item $h_1,\ldots,h_n$ are the rows of $h$, regarded as elements of $\mathbb C^n$.

\item $\xi$ is the magic basis of $\mathbb C^n$ given by $\xi_{ij}=h_i/h_j$.

\item $P_{ij}$ is the orthogonal projection on $\xi_{ij}$.

\item $\pi:A_s(n)\to B(H)$ is the representation given by $\pi(u_{ij})=P_{ij}$. 

\item $A$ is the quantum permutation algebra associated to $\pi$.
\end{enumerate}
\end{definition}

As explained in the introduction, this construction has been known for some time, but the whole subject is quite slowly evolving. The idea is that the quantum permutation group $G$ associated to the algebra $A$ encodes the ``quantum symmetries'' of $h$, and the hope would be that the quantum permutation groups could be used in order to approach the main problems regarding the complex Hadamard matrices.

We begin our study by carefully reviewing the material in \cite{bni}, by using the abstract machinery developed in the previous sections.

\begin{proposition}
The construction $h\to A$ has the following properties:
\begin{enumerate}
\item For the Fourier matrix $F_n$ we have $A=C(\mathbb Z_n)$.

\item For a tensor product $h=h'\otimes h''$ we have $A=A'\otimes A''$.
\end{enumerate}
\end{proposition}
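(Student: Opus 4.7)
The overall strategy is to reduce both parts of the proposition to the general magic-basis results of Section 4. Part (1) will be an instance of Theorem 4.2, since the Fourier magic basis turns out to come from a Latin square, while part (2) will follow immediately from Theorem 4.7 on tensor products.

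For part (1), I begin by computing the magic basis directly. Since $(h_i)_k = w^{(i-1)(k-1)}$ and the entries lie on the unit circle, a short calculation gives $\xi_{ij}(k) = w^{(i-j)(k-1)}$. Introducing the character basis $b_a$ of $\mathbb{C}^n$ defined by $b_a(k) = w^{(a-1)(k-1)}$, which is orthogonal by the standard Fourier orthogonality relations, this reads $\xi_{ij} = b_{\Sigma_{ij}}$ with $\Sigma_{ij} \equiv i - j + 1 \pmod n$. Hence $\xi = b_\Sigma$ is a Latin square basis, and Theorem 4.2 applies: $A = C(G)$ where $G\subset S_n$ is generated by the rows of $\Sigma^*$. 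Inverting the relation $\Sigma_{ij} = k$ gives $\Sigma^*_{kj} \equiv j + k - 1 \pmod n$, so the $k$-th row of $\Sigma^*$ is the cyclic shift $\sigma_k : j\mapsto j + k - 1$. These $n$ shifts already form the cyclic subgroup $\mathbb{Z}_n \subset S_n$, giving $A = C(\mathbb{Z}_n)$.

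For part (2), I simply identify the magic basis of $h = h'\otimes h''$ as a tensor product of magic bases in the sense of Theorem 4.7. The rows of $h$, indexed by pairs $(i,a)$, are the tensors $h_{(i,a)} = h'_i \otimes h''_a \in \mathbb{C}^n \otimes \mathbb{C}^m$, and since pointwise inversion of a tensor is the tensor of pointwise inversions,
$$\xi_{(i,a),(j,b)} = \frac{h'_i\otimes h''_a}{h'_j\otimes h''_b} = \frac{h'_i}{h'_j}\otimes\frac{h''_a}{h''_b} = \xi'_{ij}\otimes \xi''_{ab}.$$
Theorem 4.7 then yields $A = A'\otimes A''$ directly.

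Neither part presents a genuine obstacle: the real content was absorbed into the general magic-basis machinery of Section 4, and what remains is essentially bookkeeping. The only care needed is in part (1), to correctly identify the Latin square $\Sigma$ and compute its conjugate $\Sigma^*$, and in part (2), to verify that pointwise division distributes over the tensor product, which holds because $h'_j$ and $h''_b$ are coordinatewise invertible in $\mathbb{C}^n$ and $\mathbb{C}^m$ respectively.
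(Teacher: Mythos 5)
Your proof is correct and follows essentially the same route as the paper: part (1) is reduced to the Latin-square case via Theorem 4.2, and part (2) to the tensor-product result Theorem 4.7. The paper writes $\xi_{ij}=\rho^{i-j}$ with $\rho=(1,w,\ldots,w^{n-1})$ and leaves the Latin-square identification implicit, whereas you spell out $\Sigma$, its conjugate $\Sigma^*$, and the resulting cyclic group, but the underlying argument is identical.
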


\begin{proof}
(1) The Fourier matrix is formed by the powers of the root of unity $w=e^{2\pi i/n}$. In terms of the vector $\rho=(1,w,\ldots,w^{n-1})$, the rows of $h=F_n$ are the given by $h_i=\rho^{i-1}$, so the corresponding magic basis is given by $\xi_{ij}=\rho^{i-j}$. But this is a Latin magic basis, and by applying Theorem 4.2 we get the result.

(2) It follows from definitions that at the level of associated magic bases we have $\xi=\xi'\otimes\xi''$, so by applying Theorem 4.7 we get the result.
\end{proof}

As a consequence of the above two results, for a tensor product of Fourier matrices, the corresponding quantum permutation algebra $A$ is commutative. As pointed out in \cite{bni}, the converse holds, and in fact, we have the following general result.

\begin{theorem}
For an Hadamard matrix, the following are equivalent:
\begin{enumerate}
\item $A$ is commutative.

\item $A$ is cocommutative.

\item $A\simeq C(\mathbb Z_{n_1}\times\ldots\times\mathbb Z_{n_k})$, for some numbers $n_1,\ldots,n_k$.

\item $h\simeq F_{n_1}\otimes\ldots\otimes F_{n_k}$, for some numbers $n_1,\ldots,n_k$.
\end{enumerate}
\end{theorem}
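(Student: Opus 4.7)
My plan is to close the circle of implications $(4) \Rightarrow (3) \Rightarrow (1),\,(2) \Rightarrow (1) \Rightarrow (4)$. The easy half is immediate from Proposition 6.3: iterating part (2) and applying part (1) to each factor gives $A \simeq C(\mathbb Z_{n_1}) \otimes \ldots \otimes C(\mathbb Z_{n_k}) \simeq C(\mathbb Z_{n_1} \times \ldots \times \mathbb Z_{n_k})$, yielding $(4) \Rightarrow (3)$; and $(3) \Rightarrow (1)$, $(3) \Rightarrow (2)$ are trivial, since the function algebra of a finite abelian group is both commutative and cocommutative as a Hopf algebra.

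For $(2) \Rightarrow (1)$, the essential remark is that the Hadamard magic basis is automatically non-degenerate: the vector $\xi_{ij} = h_i/h_j$ has entries all of modulus one, so is in particular never zero. This lets me invoke Theorem 3.4 directly, from which $X = X^E$ for an abelian magic partition $E$, whence $A \simeq C(G)$ with $G \subset S_n$ abelian---and in particular, $A$ is commutative.

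The heart of the argument is $(1) \Rightarrow (4)$. I would first apply Theorem 4.3(1): commutativity of $A$ means the magic basis $\xi$ is Latin, so there exist an orthogonal basis $v_1,\ldots,v_n$ of $\mathbb C^n$ and a Latin square $\Sigma$ with $\xi_{ij}$ proportional to $v_{\Sigma_{ij}}$. After dephasing $h$ (so $h_1 = (1,\ldots,1)$ and $h_i(1) = 1$ for all $i$), relabeling $\Sigma$ so that $\Sigma_{i1} = i$, and rescaling the $v_k$ so that $v_k = h_k$, evaluating $\xi_{ij} = \mu_{ij}\, h_{\Sigma_{ij}}$ at the first coordinate forces every proportionality constant $\mu_{ij}$ to equal $1$. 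This gives the key pointwise identity $h_i = h_j \cdot h_{\Sigma_{ij}}$, showing that $G := \{h_1,\ldots,h_n\}$ is closed under pointwise multiplication inside $\mathbb T^n$; as a finite subsemigroup of a group it is a subgroup, and it is abelian since $\mathbb T^n$ is.

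To finish, I would recognise $h$ as the character table of $G$: for each $j$, the coordinate projection $\chi_j: h_i \mapsto h_i(j)$ is a group homomorphism $G \to \mathbb T$, hence a character; since $h$ is invertible the $\chi_j$ are linearly independent and thus exhaust $\widehat{G}$. Applying the structure theorem, $G \simeq \mathbb Z_{n_1} \times \ldots \times \mathbb Z_{n_k}$, and its character table, in suitably ordered bases, is precisely $F_{n_1} \otimes \ldots \otimes F_{n_k}$; any remaining discrepancy in the ordering of elements and characters is absorbed into the equivalence relation of Definition 5.3. I expect the main technical obstacle to be the bookkeeping across the three normalizations---dephasing of $h$, relabeling of $\Sigma$, and rescaling of the basis---needed to collapse the $\mu_{ij}$ to $1$; once this is done, the group structure on the rows of $h$ and the character-table identification are transparent.
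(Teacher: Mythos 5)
Your proposal is correct and follows the same chain of implications as the paper, using Theorem 4.3 (and, beneath it, Theorems 3.4 and 2.7) for the directions involving commutativity and cocommutativity, and Proposition 6.3 for $(4)\Rightarrow(3)$. The only real difference is that you have actually carried out the ``direct computation'' for $(1)\Rightarrow(4)$ which the paper delegates to the reference \cite{bni}; your argument there --- normalize so that the proportionality constants $\mu_{ij}$ are all $1$, deduce $h_i=h_j\cdot h_{\Sigma_{ij}}$ pointwise, observe that the rows form a finite abelian subgroup of $\mathbb T^n$, and identify $h$ with the character table of that group --- is indeed the intended one and is correct.
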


\begin{proof}
$(1)\implies (4)$ follows from Theorem 4.3. Indeed, if $A$ is commutative then the corresponding magic basis must come from a Latin square, and a direct computation, performed in \cite{bni}, shows that $F$ must be a tensor product of Fourier matrices.

$(4)\implies (3)$ follows from the above two results.

$(3)\implies (2)$ is clear.

$(2)\implies (1)$ follows from Theorem 4.3.
\end{proof}

We discuss now the computation of the Hom-spaces for the fundamental corepresentation. The following result has been basically known since \cite{ba1}. In its subfactor or planar algebra version, the result has been known for a long time, see \cite{jo2}, \cite{jsu}.

\begin{theorem}
We have $T\in Hom(u^{\otimes k},u^{\otimes l})$ if and only if $T^\circ G^{k+2}=G^{l+2}T^\circ$, where:
\begin{enumerate}
\item $T^\circ=id\otimes T\otimes id$.

\item $G_{ia}^{jb}=\sum_{k=1}^nh_{ik}\bar{h}_{jk}\bar{h}_{ak}h_{bk}$.

\item $G^k_{i_1\ldots i_k,j_1\ldots j_k}=G_{i_ki_{k-1}}^{j_kj_{k-1}}\ldots G_{i_2i_1}^{j_2j_1}$.
\end{enumerate}
\end{theorem}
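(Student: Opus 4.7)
The plan is to obtain this as a direct specialization of Theorem 4.6 to the magic basis $\xi_{ij}=h_i/h_j$ of $\mathbb{C}^n$ constructed in Proposition 6.1. Since the combinatorial and Tannakian input has already been handled in Theorem 4.6, the only genuine task is to compute the Gram matrix $G$ explicitly in terms of the entries $h_{ij}$; formula (3) then comes verbatim from Definition 4.5, and the intertwining characterization comes verbatim from Theorem 4.6.

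Concretely, I would first write out $\xi_{ij}$ as an element of $\mathbb{C}^n$. The quotient $h_i/h_j$ is taken componentwise, and the Hadamard condition $|h_{jk}|=1$ gives $1/h_{jk}=\bar{h}_{jk}$, so that $(\xi_{ij})_k = h_{ik}\bar{h}_{jk}$. Taking the standard Hermitian inner product componentwise then produces
$$G_{ia}^{jb} = \langle \xi_{ij},\xi_{ab}\rangle = \sum_{k=1}^n h_{ik}\bar{h}_{jk}\,\overline{h_{ak}\bar{h}_{bk}} = \sum_{k=1}^n h_{ik}\bar{h}_{jk}\bar{h}_{ak}h_{bk},$$
matching formula (2). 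The higher Gram matrices $G^k$ in (3) are by construction the higher Gram matrices of Definition 4.5 associated to $\xi$, and Theorem 4.6 then yields $\mathrm{Hom}(u^{\otimes k},u^{\otimes l}) = \{T\mid T^\circ G^{k+2}=G^{l+2}T^\circ\}$.

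I do not expect any genuine obstacle, as the substantive content is entirely encapsulated in Theorem 4.6 and the present result is a routine unpacking. The one point worth a quick sanity check is the normalization: the vectors $\xi_{ij}$ have common squared norm $n$ rather than $1$, so the rank-one identity $\langle P_1\cdots P_m x,y\rangle = \prod_i\|\xi_i\|^{-2}\langle x,\xi_m\rangle\langle\xi_m,\xi_{m-1}\rangle\cdots\langle\xi_1,y\rangle$ used inside the proof of Theorem 4.6 introduces uniform scalar factors; these cancel from the equation $L=R$ once one passes to the normalized basis $n^{-1/2}\xi_{ij}$ (equivalently, absorbs the constant into $G$), leaving the intertwining relation in exactly the stated form.
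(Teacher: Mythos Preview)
Your proposal is correct and follows exactly the paper's own argument: invoke Theorem 4.6 for the magic basis $\xi_{ij}=h_i/h_j$, and compute the Gram entries $G_{ia}^{jb}=\langle\xi_{ij},\xi_{ab}\rangle$ componentwise using $|h_{jk}|=1$. Your added normalization remark is a legitimate sanity check that the paper leaves implicit in the proof of Theorem 4.6 itself; since all $\xi_{ij}$ share the same norm, the uniform scalar indeed cancels from both sides of $L=R$, so nothing changes.
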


\begin{proof}
This follows indeed from Theorem 4.6. For a basis $\xi_{ij}=h_i/h_j$ coming from an Hadamard matrix, we have:
\begin{eqnarray*}
G_{ia}^{jb}
&=&<\xi_{ij},\xi_{ab}>\\
&=&<h_i/h_j,h_a/h_b>\\
&=&<(h_{ik}/h_{jk})_k,(h_{ak}/h_{bk})_k>\\
&=&\sum_{k=1}^nh_{ik}\bar{h}_{jk}\bar{h}_{ak}h_{bk}
\end{eqnarray*}

This gives the result.
\end{proof}

We discuss now the various product operations for complex Hadamard matrices. Observe that the tensor product problem has already been solved.

The following product operations, the first one due to Di\c t\u a \cite{di1}, and the second one being inspired from it, will play a key role in what follows.

\begin{definition}
We have the following product operations:
\begin{enumerate}
\item The Di\c t\u a product of an Hadamard matrix $h\in M_n(\mathbb C)$ with a family of Hadamard matrices $k^1,\ldots,k^n\in M_m(\mathbb C)$ is $h\otimes (k^1,\ldots,k^n)=(h_{ij}k^j_{ab})_{ia,jb}$.

\item The Di\c t\u a deformation of a tensor product $h\otimes k\in M_{nm}(\mathbb C)$, with matrix of parameters $l\in M_{m\times n}(\mathbb T)$, is $h\otimes_lk=(h_{ij}l_{aj}k_{ab})_{ia,jb}$.
\end{enumerate}
\end{definition}

The above operations are both given in a compact form, by using some standard tensor product identifications. For practical purposes, however, the usual matrix notation is more convenient. In matrix notation, the Di\c t\u a product is given by:
$$h\otimes(k^1,\ldots,k^n)
=\begin{pmatrix}
h_{11}k^1&\ldots&h_{1n}k^n\\
\ldots&\ldots&\ldots\\
h_{n1}k^1&\ldots&h_{nn}k^n
\end{pmatrix}$$

As for the Di\c t\u a deformation, this is by definition the following Di\c t\u a product:
$$h\otimes_lk=h\otimes
\left( \begin{pmatrix}
l_{11}k_{11}&\ldots&l_{11}k_{1m}\\
\ldots&\ldots&\ldots\\
l_{m1}k_{m1}&\ldots&l_{m1}k_{mm}
\end{pmatrix},\ldots,
\begin{pmatrix}
l_{1n}k_{11}&\ldots&l_{1n}k_{1m}\\
\ldots&\ldots&\ldots\\
l_{mn}k_{m1}&\ldots&l_{mn}k_{mm}
\end{pmatrix}\right)$$

It is possible of course to further expand the Di\c t\u a product, see section 10 below.

Observe that these notions generalize the usual tensor product, because $h\otimes k$ is equal to $h\otimes_{\mathbb I}k=h\otimes(k,\ldots,k)$, where $\mathbb I$ is the matrix filled with 1's. 

The Di\c t\u a product can be, however, a quite complicated construction. 

\begin{proposition}
$F_{22}^q$ is a Di\c t\u a deformation of $F_2\otimes F_2$.
\end{proposition}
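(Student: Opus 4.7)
The plan is to exhibit an explicit parameter matrix $l \in M_2(\mathbb T)$ that realizes $F_{22}^q$ as a Di\c t\u a deformation of $F_2 \otimes F_2$. Guided by the fact that $q$ appears in $F_{22}^q$ only in the lower-right $2\times 2$ block (up to signs), the natural candidate is
$$l = \begin{pmatrix} 1 & 1 \\ 1 & q \end{pmatrix}.$$

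First I would apply the defining formula $F_2 \otimes_l F_2 = (h_{ij}\, l_{aj}\, k_{ab})_{ia,jb}$ with $h=k=F_2$, using the lexicographic order $(1,1),(1,2),(2,1),(2,2)$ on both the row indices $(i,a)$ and the column indices $(j,b)$. A direct entry-by-entry expansion gives
$$F_2 \otimes_l F_2 = \begin{pmatrix} 1 & 1 & 1 & 1 \\ 1 & -1 & q & -q \\ 1 & 1 & -1 & -1 \\ 1 & -1 & -q & q \end{pmatrix}.$$

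Next I would compare this with $F_{22}^q$ and observe that the two matrices differ only by the swap of the second and third columns. Since the equivalence relation on complex Hadamard matrices from Definition 5.3 explicitly allows permutations of columns, this single transposition identifies $F_{22}^q$, up to equivalence, with $F_2 \otimes_l F_2$, thereby exhibiting it as a Di\c t\u a deformation of $F_2 \otimes F_2$.

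There is essentially no obstacle here: the argument reduces to guessing $l$ from the location of $q$ and then performing one routine calculation. The only mild subtlety is notational, namely that the indexing convention forces a column swap; a different ordering of the basis of $\mathbb C^2 \otimes \mathbb C^2$ would replace this by a row swap, but in every case the required adjustment is a single transposition absorbed by equivalence.
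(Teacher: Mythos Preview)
Your proposal is correct and essentially identical to the paper's own proof: the paper also takes $l=\begin{pmatrix}1&1\\1&q\end{pmatrix}$, expands $F_2\otimes_l F_2$ to obtain the same $4\times 4$ matrix you found, and then observes that it is equivalent to $F_{22}^q$. The only cosmetic difference is that the paper passes through the Di\c t\u a product notation as an intermediate step, whereas you apply the deformation formula directly; your explicit identification of the equivalence as the swap of columns $2$ and $3$ is actually slightly more precise than the paper's bare assertion of equivalence.
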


\begin{proof}
Consider indeed the following Di\c t\u a deformation:
$$h^q=\begin{pmatrix}1&1\\ 1&-1\end{pmatrix}
\otimes_{\begin{pmatrix}1&1\\ 1&q\end{pmatrix}}
\begin{pmatrix}1&1\\ 1&-1\end{pmatrix}$$

In Di\c t\u a product notation, this matrix is given by:
$$h^q=\begin{pmatrix}1&1\\ 1&-1\end{pmatrix}
\otimes\left( 
\begin{pmatrix}1&1\\ 1&-1\end{pmatrix},
\begin{pmatrix}1&1\\ q&-q\end{pmatrix}
\right)$$

Thus we have the following formula:
$$h^q=\begin{pmatrix}
1&1&1&1\\
1&-1&q&-q\\
1&1&-1&-1\\ 
1&-1&-q&q
\end{pmatrix}$$

The matrix on the right being equivalent to $F_{22}^q$, this gives the result.
\end{proof}

Observe that in the above example, the first row and column of the parameter matrix $l$ consist only of 1's. This normalization can be made as well in the general case.

The following result should be related to the considerations in \cite{cni}.

\begin{theorem}
We have the following results:
\begin{enumerate}
\item The algebra associated to $h\otimes (k^1,\ldots,k^n)$ is a quotient of $B*_wA$, where $A$ is the algebra associated to $h$, and $B$ is the algebra associated to $k^1,\ldots,k^n$.

\item The algebra associated to $h\otimes_lk$ is a quotient of $B*_wA$, where $A$ is the algebra associated to $h$, and $B$ is the algebra associated to $k$.
\end{enumerate}
\end{theorem}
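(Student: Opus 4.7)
Both parts apply Theorem~3.8 to the magic unitaries coming from $h$ and the $k^j$.

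For part (1): the plan is to exhibit a factorization of the representation $\pi_H : A_s(nm) \to B(\mathbb C^n \otimes \mathbb C^m)$ through $B *_w A$; the universal property of the Hopf image then gives the desired quotient map. Identifying $\mathbb C^{nm}$ with $\mathbb C^n \otimes \mathbb C^m$ via $(pc) \leftrightarrow e_p \otimes f_c$, the magic basis of $H = h\otimes(k^1,\ldots,k^n)$ decomposes as
$$\xi^H_{(ia)(i'a')} \;=\; \sum_{p=1}^n \frac{h_{ip}}{h_{i'p}}\, e_p \otimes \xi^{k^p}_{aa'},$$
which reveals the wreath-product shape one expects: an outer $h$-structure indexed by the fiber coordinate $p$, and at each fiber $p$ an inner $k^p$-structure. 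Following the template of Theorem~3.8 I would construct a homomorphism $\Phi : A_s(m) *_w A_s(n) \to B(\mathbb C^n \otimes \mathbb C^m)$ sending the outer magic unitary $v_{ij}$ of $A_s(n)$ to $P^h_{ij}\otimes 1$, and the inner copies $u^{(p)}_{ab}$ of $A_s(m)$ to operators capturing the $k^p$-structure on the $p$-th fiber. Functoriality of the free wreath product, applied to the canonical maps $A_s(n)\to A$ and $A_s(m)\to B$ (the latter coming from the universal property of the common Hopf image, Definition~3.7), then factors $\Phi$ further through $B *_w A$.

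The main obstacle is that $P^H$ is not literally a Di\c t\u a product of magic unitaries in the sense of Definition~3.6: the inner projection at fiber $p$ is $P^{k^p}$, which varies with $p$, whereas Definition~3.6 produces a single $V^i_{ab}$ independent of the fiber. Consequently the naive choice $u^{(p)}_{ab}\mapsto 1\otimes P^{k^p}_{ab}$ does not recover $P^H_{(ia)(jb)}$ on multiplication with $\Phi(v_{pj}) = P^h_{pj}\otimes 1$, since the $p$-dependence in $\xi^{k^p}_{aa'}$ is not reflected on the right-hand side. Working around this requires either introducing fiber-wise unitary conjugations of $\mathbb C^m$ that identify each $P^{k^p}$ with a common reference magic unitary for $B$ before recombining, or proceeding Tannakianly as in Theorem~4.7 by analyzing how the higher Gram matrices of $\xi^H$ decompose in terms of those of $\xi^h$ and the $\xi^{k^p}$; either route yields both the magic-unitary relations for the images of the $u^{(p)}_{ab}$ and the required commutation with $\Phi(v_{pj})$.

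For part (2), one reduces to part (1). Writing $h\otimes_l k = h\otimes(\tilde k^1,\ldots,\tilde k^n)$ with $\tilde k^j_{ab} = l_{aj}k_{ab}$, each $\tilde k^j$ differs from $k$ only by multiplying row $a$ by the unimodular scalar $l_{aj}$; hence $\xi^{\tilde k^j}_{ab} = (l_{aj}/l_{bj})\,\xi^k_{ab}$ is a unimodular scalar multiple of $\xi^k_{ab}$, and the corresponding rank-one projections coincide: $P^{\tilde k^j}_{ab} = P^k_{ab}$. Thus every $\tilde k^j$ has the same Hopf image $B$ as $k$, the common Hopf image of the family is also $B$, and part (1) gives the claim.
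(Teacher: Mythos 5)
You have correctly identified the genuine subtlety that the paper's one-sentence proof (``this follows from Theorem 3.8, due to the compatibility between the Di\c t\u a products of Hadamard matrices, and of magic unitaries'') leaves entirely implicit. Indeed, the magic unitary coming from the Hadamard Di\c t\u a product is the family of rank-one projections onto $\xi^H_{(ia)(jb)}=\sum_p (h_{ip}/h_{jp})\,e_p\otimes\xi^{k^p}_{ab}$, and since the inner factor $\xi^{k^p}_{ab}$ depends on the running coordinate $p$, these are not product vectors when the $k^p$ genuinely differ; so $P^H$ is \emph{not} of the form $U_{ij}\otimes V^i_{ab}$ as in Definition~3.6, and Theorem~3.8 does not apply verbatim. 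You deserve credit for catching this, and your treatment of part~(2) is sound: row rescalings of $k$ do not change the magic basis projections, so every $\tilde k^j$ gives the same magic unitary and the common Hopf image is $B$.

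However, the proposed repairs are asserted rather than carried out, and the first one fails. A block-diagonal unitary $\sum_p e_pe_p^*\otimes U_p$ sends $\xi^H_{(ia)(jb)}$ to $\sum_p(h_{ip}/h_{jp})\,e_p\otimes U_p\xi^{k^p}_{ab}$; for this to become a product vector for \emph{all} $a,b$ simultaneously, the $U_p$ would have to carry the entire magic basis of $k^p$ onto a single reference magic basis, which forces all $k^p$ to share the same Gram matrix---exactly the degenerate case where the Di\c t\u a product collapses to a tensor product. The honest route is to build the factorization homomorphism directly from the magic unitary $P^H$: one sets $\Psi(v_{ab})=\sum_j P^H_{(ia)(jb)}$ and $\Psi(u^{(a)}_{ij})=\sum_b P^H_{(ia)(jb)}$, and checks, using that the rows of each $k^p$ are orthogonal, that the first sum is independent of $i$ (it is the projection onto $\bigoplus_p\mathbb C\,\xi^{k^p}_{ab}$), that both are magic unitaries, that the free wreath commutation relations hold (this uses that the magic basis is a basis, so the relevant ranges intersect in the span of $\xi^H_{(ia)(jb)}$), and that the products recover $P^H$. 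When you do this you will notice the copy index lands on the \emph{outer} $h$-side, i.e.\ the factorization is through $A*_wB$ rather than $B*_wA$: with the column-indexed convention $H_{ia,jb}=h_{ij}k^j_{ab}$ of Definition~6.6 the analogous partial sum $\sum_b P^H_{(ia)(jb)}$ does depend on $a$ in concrete examples (e.g.\ the Di\c t\u a deformation giving $F_{22}^q$ with $q$ non-real), so the paper's stated order of the free wreath product conceals a transposition that must be untangled before the theorem is usable. In short: right diagnosis, but neither of your two sketched fixes is a proof, and the correct fix reveals an index-convention issue that neither you nor the paper addresses.
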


\begin{proof}
This follows from Theorem 3.8, due to the compatibility between the Di\c t\u a products of Hadamard matrices, and of magic unitaries.
\end{proof}

\begin{problem}
For which Di\c t\u a deformations is the associated algebra isomorphic to the ambient free wreath product?
\end{problem}

We believe that this happens for instance when the matrix of parameters $l$ is generic. Here by ``generic'' we mean for instance having the entries algebrically independent over $\mathbb Q$, but some weaker conditions are actually expected to be sufficient. 

This conjecture is verified for $h=k=F_2$, thanks to the computations in \cite{bni}.

The natural idea for verifying the conjecture would be via Tannakian duality, but the Tannakian description of the free wreath products is not available yet. So far we have only a conjecture in this sense, regarding the dimensions of the Hom-spaces \cite{bb1}.

\section{Butson matrices}

Most of the examples of Hadamard matrices given in section 5 are based on certain roots of unity. We have here the following definition.

\begin{definition}
The level of a complex Hadamard matrix $h\in M_n(\mathbb C)$ is the smallest number $l\in\{1,2,\ldots,\infty\}$ such that all the entries of $h$ are $l$-th roots of unity.
\end{definition}

Here we agree that a root of unity of infinite order is simply a number on the unit circle. The level of a complex Hadamard matrix $h$ will be denoted $l(h)$.

The matrices having level $l<\infty$ were first investigated by Butson in \cite{but}. In this section we discuss the main combinatorial problems regarding such matrices.

\begin{definition}
The Butson class $H_n(l)$ consists of Hadamard matrices in $M_n(\mathbb C)$ having as entries the $l$-th roots of unity. In particular:
\begin{enumerate}
\item $H_n(2)$ is the set of all $n\times n$ real Hadamard matrices.

\item $H_n(l)$ is the set of $n\times n$ Hadamard matrices of level $l'|l$.

\item $H_n(\infty)$ is the set of all $n\times n$ Hadamard matrices.
\end{enumerate}
\end{definition}

The basic problem regarding the Butson matrices, that is related as well to the present Hopf algebra considerations, is the characterization of the pairs $(n,l)$ such that $H_n(l)\neq 0$. We have here the following fundamental result, due to Sylvester \cite{syl}.

\begin{theorem}
If $H_n(2)\neq\emptyset$ then $n=2$ or $4|n$.
\end{theorem}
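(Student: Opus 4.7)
The plan is to exploit the existence of three mutually orthogonal $\pm 1$ vectors (obtained once $n \geq 3$) by a simple counting argument on pairs of columns. The only genuinely delicate thing is the very small cases, which we dispose of by direct inspection.

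First I would observe that multiplying any column by $-1$ preserves the property of being a real Hadamard matrix, so without loss of generality any $h \in H_n(2)$ can be assumed dephased in the sense of Definition 5.3, i.e.\ with first row $(1,1,\ldots,1)$. If $n = 1$ or $n = 2$ the statement is already contained in the conclusion, so we may assume $n \geq 3$ and then aim to show $4 \mid n$.

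Next, fix the first three rows $h_1,h_2,h_3$ of $h$ with $h_1 = (1,\ldots,1)$. For each column $j \in \{1,\ldots,n\}$ the pair $(h_{2j},h_{3j})$ takes one of the four values $(+,+),(+,-),(-,+),(-,-)$; let $a,b,c,d$ denote the number of columns of each respective type. Then by definition
\begin{equation*}
a+b+c+d = n,
\end{equation*}
and the three orthogonality relations $\langle h_1,h_2\rangle = \langle h_1,h_3\rangle = \langle h_2,h_3\rangle = 0$ give respectively
\begin{equation*}
a+b-c-d = 0,\qquad a-b+c-d = 0,\qquad a-b-c+d = 0.
\end{equation*}

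Finally I would solve this $4\times 4$ linear system. Adding the total count to each of the three orthogonality relations in turn yields $2(a+b) = 2(a+c) = 2(a+d) = n$, and combining these gives $a = b = c = d$ and hence $n = 4a$. This forces $4 \mid n$, and together with the trivial cases $n \leq 2$ proves the theorem. The only potential obstacle is making sure the counting argument is actually available, which it is as soon as $h$ has at least three rows — this is why $n = 2$ has to be listed as an exceptional case in the statement.
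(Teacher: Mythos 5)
Your proof is correct and is essentially the same counting argument the paper gives: the paper normalizes the first three rows into a block form and lets $a,b,c,d$ be the block lengths, while you count columns by sign pattern without sorting them first, but the resulting linear system on $(a,b,c,d)$ is identical and yields $a=b=c=d$, hence $4\mid n$.
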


\begin{proof}
Let $h\in H_n(2)$, with $n\geq 3$. By using the equivalence relation, we may assume that the first three rows have a normalized block decomposition, as follows:  
$$h=\begin{pmatrix}
1&1&1&1\\
1&1&-1&-1\\
1&-1&1&-1\\
\ldots&\ldots&\ldots&\ldots
\end{pmatrix}$$

Now let $a,b,c,d$ be the lengths of the blocks in the third row. The orthogonality relations between the first three rows give $a+b=c+d$, $a+c=b+d$ and $a+d=b+c$, so we have $a=b=c=d$, and we can conclude that we have $4|n$.
\end{proof}

The Hadamard conjecture, named after \cite{had}, states that the converse of the above result is true: if $4|n$ then $H_n(2)\neq\emptyset$. This question is reputed to be of remarkable difficulty, and the numeric verification so far goes up to $n=664$. See \cite{kta}, \cite{lgo}.

For general exponents $l>2$, the formulation of such conjectures is a quite delicate problem, because there are many obstructions on $(n,l)$, of quite different nature. 

The basic result here, coming from the results of Lam and Leung in \cite{lle}, is as follows: 

\begin{theorem}
If $H_n(l)\neq\emptyset$ and $l=p_1^{a_1}\ldots p_s^{a_s}$ then $n\in p_1\mathbb N+\ldots+p_s\mathbb N$.
\end{theorem}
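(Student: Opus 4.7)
The plan is to reduce Theorem 6.4 to the Lam--Leung theorem on vanishing sums of roots of unity, which is the substantive number-theoretic input from \cite{lle}. The Hadamard condition is used only to produce such a vanishing sum, after which everything follows from the cited theorem.

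First I would extract, from the Hadamard hypothesis, a vanishing sum of $n$ elements of $\mu_l$, the group of $l$-th roots of unity. Pick any $h\in H_n(l)$ and two distinct rows $h_i, h_j$ of $h$. The orthogonality of the rows gives
$$\sum_{k=1}^{n} h_{ik}\,\overline{h_{jk}} = 0.$$
Each factor $h_{ik}$ and each $\overline{h_{jk}}$ is an $l$-th root of unity, so each summand $z_k := h_{ik}\overline{h_{jk}}$ lies in $\mu_l$. Thus we have exhibited an identity $z_1 + \ldots + z_n = 0$ with all $z_k \in \mu_l$.

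Next I would invoke Lam--Leung \cite{lle}: whenever a sum of $n$ elements of $\mu_l$ vanishes, and $l=p_1^{a_1}\cdots p_s^{a_s}$, one must have $n \in p_1\mathbb{N} + \ldots + p_s\mathbb{N}$. Applied to the sum above, this yields the conclusion of the theorem directly.

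The main obstacle is precisely this appeal to Lam--Leung, which is the deep part and which I would cite rather than reprove. The ``easy'' direction — that each $n$ in the numerical semigroup $p_1\mathbb{N}+\ldots+p_s\mathbb{N}$ is realized, by concatenating complete sums $\sum_{i=0}^{p_j-1}\zeta^{l i/p_j}$ over cyclotomic subgroups of $\mu_l$ — is elementary. The hard direction, that no other lengths can occur, proceeds by passing to the group ring $\mathbb{Z}[\mathbb{Z}/l\mathbb{Z}]$, analyzing the kernel of the augmentation map $\zeta\mapsto 1$ in terms of the basic relations $\sum_{i=0}^{p-1}\zeta^{(l/p)i}=0$ for $p\mid l$, and inducting on the number of distinct prime divisors of $l$. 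Finally, note that the Hadamard structure was barely used: we only invoked orthogonality of two rows, so the result holds verbatim for any pair of orthogonal vectors with entries in $\mu_l$.
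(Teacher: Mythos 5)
Your proof is correct and follows essentially the same route as the paper: orthogonality of two rows yields a vanishing sum of $n$ elements of $\mu_l$, after which the Lam--Leung theorem on vanishing sums of roots of unity gives $n\in p_1\mathbb N+\ldots+p_s\mathbb N$. The paper phrases this by introducing the set $\Lambda_l$ of lengths of vanishing sums and quoting Lam--Leung for the equality $\Lambda_l=p_1\mathbb N+\ldots+p_s\mathbb N$, but the substance is identical.
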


\begin{proof}
The simplest particular case of this statement is the condition ``$l=2$ implies $2|n$'', weaker than the Sylvester obstruction, and whose proof is elementary. As pointed out by Butson in \cite{but}, a similar argument applies to the general case where $l=p$ is prime. Moreover, as observed by Winterhof in \cite{win}, the case $l=p^a$ is similar.

In the general case, the idea is the same: the obstruction  comes from the orthogonality of the first two rows. Indeed, this orthogonality condition tells us that in order to have $H_n(l)\neq 0$, the number $n$ must belong to the following set:
$$\Lambda_l=\left\{n\in\mathbb N\ \Big|\ \exists\,w_1,\ldots,w_n,\,w_i^l=1,\,\sum w_i=0\right\}$$

For $p$ prime, we call $p$-cycle the formal sum of all roots of unity of order $p$, that might be globally rotated, i.e. multiplied by a complex number of modulus 1. Since the actual sum of a cycle is 0, we have $p_1,\ldots,p_s\in\Lambda_l$, so we get:
$$p_1\mathbb N+\ldots+p_s\mathbb N\subset\Lambda_l$$

The point is that, by the general results of Lam and Leung in \cite{lle}, this inclusion is an equality. Thus the condition $n\in\Lambda_l$ is in fact the one in the statement. 
\end{proof}

In order to get more insight into the structure of Butson matrices, we have to understand the precise meaning of the Lam-Leung result. The situation is as follows:

At $s=1,2$ this follows from a finer result, stating that any vanishing sum of $l$-roots of unity can be decomposed into cycles. The proof of this latter result is elementary at $s=1$, and follows from a routine computation at $s=2$.

At $s=3$ the situation becomes considerably more complicated, because there exist vanishing sums which don't decompose into cycles. The idea is that given any three prime numbers $p,q,r$, we can produce a ``non-trivial'' vanishing sum by substracting a $p$-cycle from a suitable union of $q$-cycles and $r$-cycles. 

Here is the simplest example of such a sum, with $w=e^{2\pi i/30}$:
$$S=w^5+w^6+w^{12}+w^{18}+w^{24}+w^{25}$$

The fact that $S$ vanishes indeed can be checked as follows:
\begin{eqnarray*}
S&=&(w^6+w^{12}+w^{18}+w^{24})+(w^5+w^{25})\\
&=&(w^0+w^6+w^{12}+w^{18}+w^{24})+(w^5+w^{15}+w^{25})-(w^0+w^{15})\\
&=&0
\end{eqnarray*}

However, by drawing the elements of $S$ on the unit circle, we can see that $S$ cannot decompose as a sum of cycles. Observe however that the length of this ``non-trivial'' vanishing sum is $6\in 2\mathbb N+3\mathbb N+5\mathbb N$, as predicted by the general results in \cite{lle}. 

As a conclusion,  the following happens: ``a vanishing sum of roots of unity has the same length as a sum of cycles, althought it isn't necessarily a sum of cycles''.

These considerations suggest the following definition.

\begin{definition}
A Butson matrix is called regular if the scalar product of each pair of rows decomposes as a sum of cycles.
\end{definition}

In other words, associated to a given matrix $h\in H_n(l)$ are the $n(n-1)/2$ relations stating that the rows are pairwise orthogonal. Each of these relations is a vanishing sum of $l$-roots of unity, and the regularity condition is that each of these vanishing sums decomposes as a sum of $p$-cycles, with $p$ ranging over the prime divisors of $l$.

The point is that all the known examples of Butson matrices seem to be regular. For instance for the Petrescu matrix $P^q$, each vanishing sum coming from the orthogonality of the rows consists of two 2-cycles and a 3-cycle.

\begin{conjecture}
The regularity condition is automatic.
\end{conjecture}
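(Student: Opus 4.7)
The plan is to attack the conjecture in stages according to the prime factorization of the level $l$. If $l = p_1^{a_1} \cdots p_s^{a_s}$ has $s \le 2$, then by the finer Lam--Leung result invoked in the discussion preceding Definition 6.5, every vanishing sum of $l$-th roots of unity actually decomposes as a sum of cycles. Hence the regularity condition in Definition 6.5 is automatic in this range, with no use whatsoever of the Hadamard structure. The genuinely new content of the conjecture therefore begins with $s \ge 3$, the smallest instance being $l = 30$, exactly as illustrated by the exotic length-six sum $S$ displayed just before the conjecture.

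For such $l$, I would try to exploit the Hadamard structure \emph{beyond a single pair of rows}. Writing $m_{ij,k} = h_{ik}\bar h_{jk}$, the relation $hh^* = nI_n$ says that each multiset $M_{ij} = \{m_{ij,k}\}_{k=1}^n$ is a vanishing sum of $l$-th roots of unity of length exactly $n$, but the dual identity $h^*h = nI_n$ and the cocycle-type relation $m_{ij,k}\, m_{jl,k} = m_{il,k}$ couple the $\binom{n}{2}$ multisets $M_{ij}$ to one another. The strategy is to argue that the presence of a non-cyclic ``exotic'' part in one $M_{ij}$ propagates through these couplings to the other $M_{i'j'}$, and then, by subtracting off any genuine cycle contributions, to derive an arithmetic contradiction with the fact that each $M_{ij}$ has length precisely $n$.

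A parallel angle is Galois-theoretic: the group $\mathrm{Gal}(\mathbb Q(\zeta_l)/\mathbb Q)$ acts entrywise on $H_n(l)$, and the cyclic vanishing sums are exactly those whose supports are stable under suitable subgroups of this Galois action. A Galois-averaging argument applied to the $M_{ij}$ should either reduce an arbitrary Butson matrix to a regular one, or produce a coherent orbit of exotic multisets which, by a counting argument in the spirit of Sylvester's proof of Theorem 6.3, cannot coexist inside a single Hadamard matrix without violating one of the column orthogonality relations.

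The main obstacle, and the reason the statement is left as a conjecture rather than a theorem, is that the coupling relations $m_{ij,k}\, m_{jl,k} = m_{il,k}$ are multiplicative whereas the cycle-decomposition condition is additive, so the two are genuinely hard to combine; all the standard tools for linear relations among roots of unity (Rédei, de Bruijn, Lam--Leung) fit only the additive side. A realistic first milestone, before attempting any structural proof, would be a computer verification of the conjecture at $l = 30$ and $l = 2 \cdot 3 \cdot p$ for small primes $p \ge 5$, which would either prove the first nontrivial case or exhibit an irregular Butson matrix and thereby refute the conjecture outright.
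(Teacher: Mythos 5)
The statement you are addressing is Conjecture 7.6, and the paper does \emph{not} prove it: it is stated precisely as an open conjecture, so there is no ``paper's own proof'' to compare against. The paper's only contribution here is the observation you reproduce in your first paragraph: by the finer Lam--Leung decomposition result recalled just before Definition 7.5, the conjecture holds automatically whenever the level $l$ has at most two prime factors, and the first genuinely open case is $s\ge 3$, with $l=30$ and the exhibited exotic six-term sum $S$ as the threshold example. On this much you and the paper are in exact agreement. Everything in your second and third paragraphs (the multiplicative cocycle relation $m_{ij,k}\,m_{jl,k}=m_{il,k}$, the Galois-averaging idea) goes beyond anything the paper offers, but by your own admission these are heuristics rather than arguments: you correctly identify the additive-versus-multiplicative mismatch as the obstacle, and you do not resolve it. So the proposal is not a proof and does not claim to be one, which is the honest status given that the paper itself leaves the question open.

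One point worth flagging: the paper does assert, in the remarks after Theorem 11.3, that the conjecture \emph{can} be verified at $n=6$ by a computer program that excludes the ``tricky sum'' from section 7, though no details are given. That is a cut by matrix size $n$, whereas the computational first milestone you propose (verification at $l=30$ and $l=2\cdot 3\cdot p$) is a cut by level $l$. These are complementary rather than identical checks, and since the relevant search space is $H_n(l)$ with both $n$ and $l$ varying, either one alone leaves the bulk of the conjecture untouched. If you pursue the computational route, the $n=6$ case is the one the paper claims is already within reach and would be a good consistency check on your code before moving to the $l$-indexed families.
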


This conjecture is of particular interest in connection with the Lam-Laung obstruction, because for a regular matrix, the obstruction is trivially satisfied. In other words, this conjecture would provide a substantial extension of the Lam-Laung obstruction.

Observe that, according the considerations preceding Definition 7.5, the conjecture holds for any $h\in H_n(l)$, with $l$ having at most 2 prime factors. However, once again by the above considerations, a new idea, which must be Hadamard matrix-specific, would be needed for exponents $l$ having at least 3 prime factors.

We discuss now some other obstructions on $(n,l)$. A basic obstruction, coming this time from all the rows, is the following one, due to de Launey \cite{lau}: 

\begin{theorem}
If $H_n(l)\neq\emptyset$ then there is $d\in\mathbb Z[e^{2\pi i/l}]$ such that $|d|^2=n^n$.
\end{theorem}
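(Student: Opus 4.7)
The plan is to obtain $d$ directly as the determinant of the Hadamard matrix. Let $h \in H_n(l)$, so by definition every entry $h_{ij}$ is an $l$-th root of unity, hence lies in the ring $\mathbb{Z}[e^{2\pi i/l}]$. The Hadamard condition (pairwise orthogonal rows of common squared norm $n$) is equivalent to the matrix identity $h h^* = n I_n$.

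First I would take determinants of both sides of $h h^* = n I_n$. On the right we obtain $\det(n I_n) = n^n$. On the left, using $\det(h^*) = \overline{\det(h)}$ (since $h^*$ is the conjugate transpose), we obtain $\det(h)\overline{\det(h)} = |\det(h)|^2$. Setting $d := \det(h)$ therefore yields
\[
|d|^2 = n^n.
\]

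Next I would verify the integrality claim $d \in \mathbb{Z}[e^{2\pi i/l}]$. The determinant is a polynomial with integer coefficients in the entries of $h$, and each entry lies in the ring $\mathbb{Z}[e^{2\pi i/l}]$; since this ring is closed under addition and multiplication, $d$ belongs to it as well. Combined with the previous identity, this exhibits the desired algebraic integer.

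There is no real obstacle here: the entire statement reduces to the determinant identity applied to $h h^* = n I_n$, together with the observation that the ring of $l$-th cyclotomic integers is stable under the polynomial operations occurring in the Leibniz expansion of $\det(h)$. The interest of the statement lies not in the proof but in its Diophantine consequences, namely that $n^n$ must be a norm from the cyclotomic field $\mathbb{Q}(e^{2\pi i/l})$, which frequently rules out pairs $(n,l)$ not excluded by the Lam--Leung obstruction of Theorem 7.4.
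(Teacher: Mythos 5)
Your proof is correct and follows exactly the same route as the paper: take $d=\det(h)$, apply determinants to $hh^*=nI_n$ to get $|d|^2=n^n$, and note that $d$ lies in $\mathbb{Z}[e^{2\pi i/l}]$ because the entries do. The paper's proof is essentially a one-line version of this argument.
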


\begin{proof}
This follows from $hh^*=nI_n$, by applying the determinant: indeed, we get $|{\rm det}(h)|^2=n^n$. The corresponding obstructions on $(l,n)$ are of quite subtle arithmetic nature, the simplest consequence being ``$l=6$ implies $n\neq 5$''. See de Launey \cite{lau}.
\end{proof}

Finally, we have the following obstruction, due to Haagerup \cite{ha1}:

\begin{theorem}
If $H_5(l)\neq\emptyset$ then $5|l$.
\end{theorem}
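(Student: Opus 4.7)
The plan is to leverage Haagerup's classification (Theorem 5.4), which asserts that $F_5$ is, up to equivalence, the unique complex Hadamard matrix of order $5$. Given any $h \in H_5(l)$, I first normalize $h$ by dephasing: multiplying each row $i$ by $\bar{h}_{i1}$ and then each column $j$ by the inverse of the resulting $(1,j)$-entry. Both scalars used at each step are ratios of entries of $h$, hence $l$-th roots of unity, so the dephased matrix $h'$ still lies in $H_5(l)$. The goal then reduces to showing that the entries of any dephased complex Hadamard matrix of order $5$ are $5$-th roots of unity.

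Applying Theorem 5.4, write $h' = D_1 P_1 F_5 P_2 D_2$ with $D_1,D_2$ diagonal unitaries and $P_1,P_2$ permutation matrices. The conditions $h'_{i1} = h'_{1j} = 1$ pin down $D_1$ and $D_2$ in terms of the entries of the matrix $P_1 F_5 P_2$. Writing $(a,b) = (P_1^{-1}(1), P_2^{-1}(1))$ and $(i',j') = (P_1^{-1}(i), P_2^{-1}(j))$, substitution and cancellation give
$$h'_{ij} \;=\; \frac{(F_5)_{ab}\,(F_5)_{i'j'}}{(F_5)_{i'b}\,(F_5)_{aj'}} \;=\; w^{(a-1)(b-1) + (i'-1)(j'-1) - (i'-1)(b-1) - (a-1)(j'-1)},$$
where $w = e^{2\pi i/5}$. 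A one-line regrouping factors the exponent as $(a-i')(b-j')$, so $h'_{ij} = w^{(a-i')(b-j')}$ is always a $5$-th root of unity.

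It remains to combine the two constraints on $h'$. Its entries are $l$-th roots of unity (from the previous paragraph) and simultaneously $5$-th roots of unity (from the paragraph just above). Since $h'$ is dephased and Hadamard, the second row is orthogonal to the first all-ones row, hence cannot itself be all ones; therefore some entry equals $w^k$ with $k \in \{1,2,3,4\}$. This entry being an $l$-th root of unity forces $5 \mid kl$, and since $5$ is prime with $\gcd(k,5) = 1$ we conclude $5 \mid l$.

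The only non-bookkeeping step is the exponent identity $(a-1)(b-1) + (i'-1)(j'-1) - (i'-1)(b-1) - (a-1)(j'-1) = (a-i')(b-j')$, which I expect to be the main (and only mild) technical point, but it is a routine manipulation characteristic of Fourier matrices. Everything else is an immediate consequence of Theorem 5.4 together with the observation that dephasing preserves the Butson class $H_n(l)$.
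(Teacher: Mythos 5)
Your proof is correct and follows essentially the same route as the paper's: both deduce the result from Haagerup's classification (Theorem 5.4). The paper's proof is considerably terser — it simply asserts that equivalence to $F_5$ forces $l(F_5)\mid l(h)$ — whereas you make this precise by dephasing within the Butson class and checking the exponent identity $(a-1)(b-1)+(i'-1)(j'-1)-(i'-1)(b-1)-(a-1)(j'-1)=(a-i')(b-j')$, which is exactly the detail the paper leaves to the reader.
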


\begin{proof}
This follows from Haagerup's classification results in \cite{ha1}. Indeed, since the Fourier matrix $F_5$ is the only complex Hadamard matrix at $n=5$, up to equivalence, each matrix $h\in H_5(l)$ must be obtained from it by permuting the rows and the columns, or by multiplying them by certain roots of unity. In terms of levels, this gives $l(F_5)|l(h)$, and from $l(F_5)=5$ and $l(h)|l$ we get the result.
\end{proof}

We would like to present as well the following original result, that we found by carefully looking at the proof of the Sylvester obstruction.

\begin{theorem}
Assume $H_n(l)\neq\emptyset$.
\begin{enumerate}
\item If $n=p+2$ with $p\geq 3$ prime, then $l\neq 2p^b$.

\item If $n=2q$ with $p>q\geq 3$ primes, then $l\neq 2^ap^b$.
\end{enumerate}
\end{theorem}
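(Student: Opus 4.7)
Both parts rely on the refined Lam--Leung decomposition, valid since $l=2^ap^b$ has only two prime factors: every vanishing sum of $l$-th roots of unity decomposes into $2$-cycles and $p$-cycles. After dephasing $h$ so that its first row and column are all $1$s, the inner product of any two distinct rows is a vanishing sum of length $n$, decomposing as $\alpha$ $2$-cycles plus $\beta$ $p$-cycles with $2\alpha+p\beta=n$.

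For part (1), $n=p+2$ is odd and $p$ is odd, forcing $\beta$ odd; the bound $p\beta\leq p+2<2p$ then yields the unique decomposition $(\alpha,\beta)=(1,1)$. Every row $i\geq 2$ and every pairwise inner product therefore has the rigid shape $\{z,-z\}\cup\{w,w\zeta,\ldots,w\zeta^{p-1}\}$, where $\zeta$ is a primitive $p$-th root of unity. The sign character $\chi:\mu_l\to\mu_2$ with $\chi(-1)=-1$ and $\chi(\omega_p)=1$ kills every $2$-cycle (since $\chi(-1)=-1$) while sending each $p$-cycle sum to $\pm p$ (since $\chi(\zeta)=1$), so $A=\chi(h)$ is a $\pm 1$ matrix satisfying $AA^T=(p+2)I+pC$, where $C$ is symmetric with zero diagonal, $\pm 1$ off-diagonal entries, and total off-diagonal sum $p(p+1)$. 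Reducing modulo $p$ gives $\det(A)^2\equiv 2^{p+2}\equiv 8\pmod p$ by Fermat, and a quadratic-residue obstruction disposes of primes $p\not\equiv\pm 1\pmod 8$. For the remaining primes one performs a Sylvester-style three-row block analysis directly in the $\mu_l$-valued setting: the rigid cycle shape pins down row $2$ up to a column permutation, and orthogonality with row $3$ reduces, after absorbing the $2$-cycle contributions, to an identity of the form $\sum_{j=0}^{p-1}\zeta^{e_j}=\pm 2\cdot u$ for some multiset $(e_j)$ and a unit $u\in\mu_l$; expansion in the integer basis $1,\zeta,\ldots,\zeta^{p-2}$ of $\mathbb{Z}[\zeta]$ shows that no admissible multiset exists, since the multiplicities $N_r$ must satisfy both $\sum_r N_r=p$ and the rigid inhomogeneous conditions imposed by writing $\pm 2\cdot u$ in that basis.

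For part (2), the conditions $p>q$ and $p$ odd force $(\alpha,\beta)=(q,0)$: every row sum and every inner product is a sum of $q$ pure $2$-cycles. Applying the character $\chi:\mu_l\to\mu_{2^a}$ with $\chi(\omega_2)=\omega_2$ and $\chi(\omega_p)=1$, one gets $\chi(-1)=\omega_2^{2^{a-1}}=-1$, so every $2$-cycle is killed and $\chi(h)\in H_{2q}(2^a)$. For $a=1$ this is a real Hadamard matrix of order $2q$ with $q\geq 3$ an odd prime; since $n=2q$ is neither equal to $2$ nor divisible by $4$, this contradicts Sylvester's Theorem~7.3. The main obstacle I anticipate is the number-theoretic step in part~(1) for primes $p\equiv\pm 1\pmod 8$: once the mod-$p$ reduction fails, one must enumerate the subcases of whether the distinguished $1$ in rows $2$ and $3$ belongs to that row's $2$-cycle or $p$-cycle, and verify that the resulting $\mathbb{Z}[\zeta]$ identity is insoluble in each case --- this is where the ``careful look'' at Sylvester's block calculation, transposed to the ring of $p$-th cyclotomic integers, actually does the work.
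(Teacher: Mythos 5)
Your part (2) follows the paper's route exactly: once all scalar products are forced to be unions of $q$ pure $2$-cycles, project to the $2$-part of $\mu_l$ (the paper's ``reduction mod $2$'' of the logarithms, which annihilates $2$-cycles precisely when $l/2$ is odd, i.e.\ $a=1$) and invoke Sylvester on the resulting real Hadamard matrix of order $2q$. Like the paper's own wording, your proposal only treats $a=1$ explicitly.

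For part (1), however, your approach diverges from the paper's, and it is genuinely incomplete. The determinant step is correct as far as it goes: with $A=\chi(h)\in\{\pm 1\}^{n\times n}$ one indeed gets $AA^{T}=(p+2)I+pC$, hence $\det(A)^2\equiv 2^{p+2}\equiv 8\pmod p$ by Fermat, which forces $p\equiv\pm 1\pmod 8$ since otherwise $2$ (hence $8$) is a nonresidue. But that leaves an infinite family of primes unaddressed, and the very first of them, $p=7$ (so $n=9$, $l=14$), is one of the entries the table in Theorem~7.10 charges to precisely this obstruction. The ``Sylvester-style three-row block analysis in $\mathbb{Z}[\zeta_p]$'' you propose to close these cases is only sketched, and you yourself flag it as the main anticipated difficulty. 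As written, part (1) has a gap.

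The paper's argument is shorter and covers every $p\geq 3$ at once, by exploiting a parity feature that your passage to $\chi(h)$ discards. Write the entries in logarithmic form, as residues mod $l=2p^b$. Since $l/2=p^b$ is odd, the two elements of any $2$-cycle have opposite parities; since $l/p=2p^{b-1}$ is even, the $p$ elements of any $p$-cycle share a parity. Because $n=p+2$ forces the unique decomposition $(\alpha,\beta)=(1,1)$, as you correctly note, every row and every difference of two rows has exactly one odd entry or exactly one even entry among its $p+2$ entries. Now fix two such rows $L_1,L_2$. Whichever of the three configurations occurs (both have one even entry, both have one odd entry, or one of each), a direct check shows $L_2-L_1$ has either $0$ or $2$ entries of the minority parity, never exactly $1$. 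Since $L_2-L_1$ must also have exactly one odd or one even entry, this is a contradiction for all $p\geq 3$, with no cyclotomic-integer or quadratic-residue input whatsoever.
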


\begin{proof}
We use the logarithmic writing for the elements of $H_n(l)$, with numbers $k\in\{0,1,\ldots, l-1\}$ standing for the corresponding roots of unity $e^{2k\pi i/l}$. Assume that a matrix $h$ contradicting the statement exists, and write it in logarithmic form.

(1) We know that each row of $h$ contains one $2$-cycle and one $p$-cycle. The two elements of the $2$-cycle have opposite parities, while the elements of the $p$-cycle have the same parity. Therefore, each row of $h$ has either exactly one odd entry or exactly one even entry. Moreover, the same applies to the difference between rows, since rows correspond to pairwise orthogonal vectors.

Let $L_1,L_2$ be two rows of $h$. We have 3 cases:

Case 1. If $L_1$ and $L_2$ both have exactly one even entry, then $L_2-L_1$ has either no odd entry, if the even entries of $L_1$ and $L_2$ are at the same position, or exactly two odd entries, if these even entries are at different positions.

Case 2. The same holds if $L_1$ and $L_2$ both have exactly one odd entry.

Case 3. If $L_1$ has exactly one even entry, and $L_2$ has exactly one odd entry, then  $L_2-L_1$ has either no even entry, if the positions correspond, or exactly two even entries, if the positions are different.

We can see that in all the three cases, $L_2-L_1$ cannot have either exactly one odd entry or exactly one even entry, a contradiction.

(2) We know that each row of $h$ is a union of $2$-cycles and of $p$-cycles. Since $p>q$, there can be no $p$-cycle, since one $p$-cycle would leave an odd number of elements which cannot be grouped in $2$-cycles. So, each row of $h$ is a union of $2$-cycles.

The same argument shows that the difference between two rows is also a union of $2$-cycles. Thus the reduction of $h$ modulo 2 is a real Hadamard matrix, so the usual Sylvester obstruction applies, and shows that there is no such matrix, since $q$ is odd.
\end{proof}

We are now in position of evaluating the ``strength'' of our set of obstructions. The relevant quantity here is the pair $(N,L)$ such that ``for any $n\leq N,l\leq L$, either $H_n(l)\neq\emptyset$ due to an explicit example, or $H_n(l)=\emptyset$ due to one of the obstructions''. Here the pair $(N,L)$ is chosen as for $N+L$ to be maximal, and by using maximality with respect to the lexicographic order, in the case of ambiguity.

With the above set of obstructions we have $(N,L)=(10,14)$, and the result is best stated as follows.

\begin{theorem}
For any $n\leq 10$ and $l\leq 14$, one of the following happens:
\begin{enumerate}
\item Either $H_n(l)\neq\emptyset$, due to an explicit example $X_n^l\in H_n(l)$.

\item Or $H_n(l)=\emptyset$, due to one of the above obstructions.
\end{enumerate}
\end{theorem}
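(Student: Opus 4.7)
The proof is a finite case analysis over the 140 pairs $(n,l)$ with $n\le 10$ and $l\le 14$. I would organize it as a table indexed by these pairs, each cell containing either an explicit matrix $X_n^l\in H_n(l)$ or a reference to the obstruction (Theorem 7.3, 7.4, 7.7, 7.8 or 7.9) that forces $H_n(l)=\emptyset$. Since $H_n(l_0)\subseteq H_n(l)$ whenever $l_0\mid l$, populating the nonempty cells for each $n$ is monotone: it suffices to identify the minimal levels at which examples exist.

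For the nonempty cells, the examples are drawn from Section 5. The Fourier matrices $F_n$ and their tensor products $F_{n_1}\otimes\cdots\otimes F_{n_k}$ cover $n\in\{2,3,4,5,8,9\}$; the Tao matrix $T\in H_6(3)$ covers $n=6$; the Petrescu matrix $P^1\in H_7(6)$ together with $F_7$ covers $n=7$; and $F_{10}$ together with $F_2\otimes F_5$ covers $n=10$. Any residual admissible pair is handled by an appropriate Di\c t\u a deformation of a previously constructed matrix.

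For the empty cells, I would apply the obstructions in increasing order of refinement. Lam-Leung (Theorem 7.4) kills every $(n,l)$ for which $n$ is not a non-negative integer combination of the prime factors of $l$, accounting for the bulk of the empty cells. Sylvester (Theorem 7.3) disposes of $l=2$ with $n\notin\{2,4,8\}$, and Haagerup (Theorem 7.8) disposes of $n=5$ with $5\nmid l$. The residual pairs, namely those that pass Lam-Leung and the classical tests, are precisely what Theorem 7.9 is calibrated to kill: within the range under consideration these are $(6,4),(6,8),(6,10),(6,14)$, $(7,10)$, $(9,14)$ and $(10,4),(10,8),(10,14)$. For each one the prime $p$ in Theorem 7.9 is chosen as $p=5$ for $(7,10)$ and $(6,10)$; as $p=7$ for $(6,14),(9,14),(10,14)$; and as any prime $p>q$ with exponent $b=0$ for the pairs with $l\in\{4,8\}$.

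The main obstacle is not any single step but the global bookkeeping: one must verify that no pair in the $10\times 14$ grid falls between the cracks of the example list and the obstruction list. The value $(N,L)=(10,14)$ is chosen precisely so that the union of examples and obstructions is exhaustive; pushing either coordinate further would expose a pair, such as $(10,15)$ or $(11,14)$, where neither a known construction nor the present obstruction list applies. In practice I would write out the full table row by row and verify that every cell is accounted for, with Theorem 7.9 providing the decisive coverage for the awkward cells listed above.
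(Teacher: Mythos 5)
There is a genuine gap in your proposal, concentrated in two places.

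\textbf{Misclassified cells.} You list $(6,4),(6,8),(10,4),(10,8)$ among the ``residual pairs \dots Theorem 7.9 is calibrated to kill'', and propose to apply Theorem 7.9(2) with some prime $p>q$ and exponent $b=0$. But these four cells are \emph{non-empty}: the Haagerup matrix $H^1$ has all entries in $\{\pm1,\pm i\}$, hence $H^1\in H_6(4)\subset H_6(8)$, and the paper exhibits an explicit $X_{10}^4\in H_{10}(4)\subset H_{10}(8)$. Trying to kill them via Theorem 7.9 with $b=0$ would therefore produce a false statement, and indeed the reading with $b=0$ is illegitimate: the proof of 7.9(2) rests on the hypothesis that $p\mid l$ (it speaks of rows decomposing into $2$-cycles \emph{and $p$-cycles}, and then reduces modulo $2$); with $b=0$ the prime $p$ plays no role and the argument does not go through. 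Your correctly identified applications of Theorem 7.9 are $(6,10),(6,14),(7,10),(9,14),(10,14)$; the other four must instead be settled on the positive side.

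\textbf{Missing constructions.} You write that ``any residual admissible pair is handled by an appropriate Di\c t\u a deformation of a previously constructed matrix.'' This is where the actual work of the proof lies, and the claim is not correct as stated. The proof requires four genuinely new explicit Butson matrices, written out in the paper in logarithmic form: $X_9^{10}\in H_9(10)$, $X_{10}^4\in H_{10}(4)$, $X_{10}^5\in H_{10}(5)$, $X_{10}^6\in H_{10}(6)$. None of these can be a Di\c t\u a product (or deformation) at the stated level, because the requisite smaller-order Butson classes are empty. For instance a $10\times10$ Di\c t\u a product would need a factor in $H_5(4)$ or $H_2(5)$, both empty; a $9\times9$ one at level $10$ would need $H_3(10)$, empty by Lam--Leung. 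So these matrices must be produced by hand (or by computer search) and exhibited explicitly, which is exactly what the paper does. Once the cell-by-cell table is corrected to include the Haagerup matrix at $(6,4),(6,8)$ and the four special matrices at $(9,10),(10,4),(10,5),(10,6),(10,8),(10,12)$, your overall strategy---a finite case check organized by divisibility of $l$ and by the list of obstructions---matches the paper's proof.
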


\begin{proof}
We use the following notations for the various known obstructions:
\begin{enumerate}
\item $\circ$ denotes the Lam-Leung obstruction (Theorem 7.4).

\item $\circ_l$ denotes the de Launey obstruction (Theorem 7.7).

\item $\circ_h$ denotes the Haagerup obstruction (Theorem 7.8).

\item $\circ_s$ denotes the Sylvester obstructions (Theorems 7.3 and 7.9).
\end{enumerate}

Also, we denote by $H,P$ the Haagerup and Petrescu matrices, taken at $q=1$, and for $k_1,\ldots,k_s\in\{2,3\}$ we use the notation $F_{k_1\ldots k_s}=F_{k_1}\otimes\ldots\otimes F_{k_s}$. 

We claim that we have the following table, describing for each $n,l$ as in the statement, either an explicit matrix in $H_n(l)$, or an obstruction which applies to $(n,l)$:
\setlength{\extrarowheight}{2pt}
{\small\begin{center}
\begin{tabular}[t]{|l|l|l|l|l|l|l|l|l|l|l|l|l|l|l|l|}
\hline $n\backslash l$\!
&2&3&4&5&6&7&8&9&10&11&12&13&14\\
\hline 2&$F_2$&$\circ$&$F_2$&$\circ$&$F_2$&$\circ$&$F_2$&$\circ$&$F_2$&$\circ$&$F_2$&$\circ$&$F_2$\\
\hline 3&$\circ$&$F_3$&$\circ$&$\circ$&$F_3$&$\circ$&$\circ$&$F_3$&$\circ$&$\circ$&$F_3$&$\circ$&$\circ$\\
\hline 4&$F_{22}$&$\circ$&$F_{22}$&$\circ$&$F_{22}$&$\circ$&$F_{22}$&$\circ$&$F_{22}$&$\circ$&$F_{22}$&$\circ$&$F_{22}$\\
\hline 5&$\circ$&$\circ$&$\circ$&$F_5$&$\circ_l$&$\circ$&$\circ$&$\circ$&$F_5$&$\circ$& $\circ_h$&$\circ$&$\circ$\\
\hline 6&$\circ_s$&$T$&$H$&$\circ$&$T$&$\circ$&$H$&$T$&$\circ_{s}$&$\circ$&$T$&$\circ$&$\circ_{s}$\\
\hline 7&$\circ$&$\circ$&$\circ$&$\circ$&$P$&$F_7$&$\circ$&$\circ$&$\circ_{s}$&$\circ$&$P$&$\circ$&$F_7$\\
\hline 8&$F_{222}\!$&$\circ$&$F_{222}\!$&$\circ$&$F_{222}\!$&$\circ$&$F_{222}\!$&$\circ$&$F_{222}\!$&$\circ$&$F_{222}\!$&$\circ$&$F_{222}\!$\\
\hline 9&$\circ$&$F_{33}$&$\circ$&$\circ$&$F_{33}$&$\circ$&$\circ$&$F_{33}$&$X_9^{10}$&$\circ$&$F_{33}$&$\circ$&$\circ_{s}$\\
\hline 10\!&$\circ_s$&$\circ$&$X^4_{10}$&$X^5_{10}$&$X^6_{10}$&$\circ$&$X^4_{10}$&$\circ$&$F_{10}$&$\circ$&$X^4_{10}$&$\circ$&$\circ$\\
\hline
\end{tabular}
\end{center}}
\setlength{\extrarowheight}{0pt}
\bigskip

Indeed, the missing matrices can be chosen, in logarithmic notation, as follows:
{\small 
$$\hskip2mm
X^{10}_9 =
\left(
\begin{array}{ccccccccc}
0&0&0&0&0&0&0&0&0\\
0&5&3&3&5&9&8&7&1\\
0&4&5&7&1&3&5&9&9\\
0&3&7&5&1&8&9&3&5\\
0&9&1&5&5&3&7&2&7\\
0&9&5&1&3&5&1&7&6\\
0&1&7&9&6&1&5&5&3\\
0&7&9&4&9&5&3&5&1\\
0&5&2&9&7&7&3&1&5
\end{array}
\right)
\hskip8mm 
X^4_{10} =
\left(
\begin{array}{cccccccccc}
0&0&0&0&0&0&0&0&0&0\\
0&2&3&3&3&3&1&1&1&1\\
0&3&2&1&1&3&3&3&1&1\\
0&3&1&2&3&1&3&1&3&1\\
0&3&1&3&2&1&1&3&1&3\\
0&3&3&1&1&2&1&1&3&3\\
0&1&3&3&1&1&2&3&3&1\\
0&1&3&1&3&1&3&2&1&3\\
0&1&1&3&1&3&3&1&2&3\\
0&1&1&1&3&3&1&3&3&2
\end{array}
\right) $$
$$X^5_{10}=
\left(\begin{array}{cccccccccccccc}
0&0&0&0&0&0&0&0&0&0\\
0&0&1&1&2&2&3&3&4&4\\
0&1&0&3&2&4&1&4&2&3\\
0&1&3&4&3&1&0&2&4&2\\
0&2&3&0&1&3&4&1&2&4\\
0&2&4&2&0&1&3&4&3&1\\
0&3&1&2&4&0&4&2&1&3\\
0&3&2&4&1&4&2&3&0&1\\
0&4&2&1&4&3&1&0&3&2\\
0&4&4&3&3&2&2&1&1&0
\end{array}\right)
\hskip5mm
X^6_{10}=
\left(\begin{array}{cccccccccccccc}
0&0&0&0&0&0&0&0&0&0\\
0&4&1&5&3&1&3&3&5&1\\
0&1&2&3&5&5&1&3&5&3\\
0&5&3&2&1&5&3&5&3&1\\
0&3&5&1&4&1&1&5&3&3\\
0&3&3&3&3&3&0&0&0&0\\
0&1&1&5&3&4&3&0&2&4\\
0&1&5&3&5&2&4&3&2&0\\
0&5&3&5&1&2&0&2&3&4\\
0&3&5&1&1&4&4&2&0&3
\end{array}\right)$$}

This justifies the above table, and we are done.
\end{proof}

We don't know what happens at $n\leq 10$ and $l=15$, nor about what happens at $n=11$ and $l\leq 14$. In each of these two cases, after applying the obstructions, remembering the known examples, and constructing some more examples by using our home software, one case of the extended table is left blank.

\section{The Tao matrix}

Thanks to Haagerup's classification result in \cite{ha1}, all the complex Hadamard matrices are known at $n\leq 5$. As explained in section 5, at $n=6$ the general classification of complex Hadamard matrices looks like a difficult task. See \cite{ben}, \cite{bb+}, \cite{msz}, \cite{szo}. 

The point, however, is that the matrices in the Butson class can be fully classified at $n=6$. This will be basically our goal for this section, and for the next two ones.

In this section we find an abstract characterization of the Tao matrix:
$$T=\begin{pmatrix}
1&1&1&1&1&1\\ 
1&1&j&j&j^2&j^2\\ 
1&j&1&j^2&j^2&j\\
1&j&j^2&1&j&j^2\\ 
1&j^2&j^2&j&1&j\\ 
1&j^2&j&j^2&j&1
\end{pmatrix}$$

We denote by $\mathbb T$ the unit circle, and we use rectangular matrices over it, with the equivalence relation in Definition 5.3.

\begin{lemma}
Let $h\in M_{3\times 6}(\mathbb T)$ be a matrix having the property that each of the $3$ scalar products between its rows is of the form $x+jx+j^2x+y+jy+j^2y$, for some $x,y\in\mathbb T$. Then modulo equivalence we have either
$$h=\begin{pmatrix}
1&1&1&1&1&1\\
1&j&j^2&r&jr&j^2r\\
1&j^2&j&s&j^2s&js
\end{pmatrix}$$
for some $r,s\in\mathbb T$, or all $18$ entries of $h$ are in $\{1,j,j^2\}$.
\end{lemma}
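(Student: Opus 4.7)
Step 1 (dephasing): The plan is to first dephase $h$: by multiplying each row, then each column, by a suitable unit complex number, I reduce to the case where the first row and the first column both consist of $1$'s. Column rescaling leaves all scalar products unchanged, and row rescaling multiplies each scalar product by a unit scalar; both operations therefore preserve the hypothesis that each $\langle R_i,R_l\rangle$ decomposes into two $3$-cycles.

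Step 2 (each row separately): For $i\in\{2,3\}$, the hypothesis on $\langle R_1,R_i\rangle$ says that the multiset of the entries of row $i$ decomposes into two $3$-cycles. Since $h_{i1}=1$ sits in one of these cycles, that cycle must be $\{1,j,j^2\}$, and the other has the form $\{r_i,jr_i,j^2r_i\}$ for some $r_i\in\mathbb T$. If both $r_2$ and $r_3$ lie in $\{1,j,j^2\}$, all $18$ entries of $h$ are cube roots of unity and we are in the second alternative of the lemma. Otherwise, after possibly swapping rows $2$ and $3$, I may assume $r:=r_2\notin\{1,j,j^2\}$; the six entries of row $2$ are then all distinct, and a unique permutation of columns $2,\ldots,6$ puts row $2$ into the target form $(1,j,j^2,r,jr,j^2r)$.

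Step 3 (row $3$): Writing row $3$ as $(1,b_2,\ldots,b_6)$, Step 2 applied to row $3$ gives a parameter $s\in\mathbb T$ such that $\{b_2,\ldots,b_6\}=\{j,j^2,s,js,j^2s\}$ as a multiset. I split the six terms of
\[
\langle R_2,R_3\rangle = 1+j\bar b_2+j^2\bar b_3+r\bar b_4+jr\bar b_5+j^2r\bar b_6
\]
by ``type'', according to whether each is a cube root of unity, or a cube root times $r$, or times $\bar s$, or times $r\bar s$. Since $r\notin\{1,j,j^2\}$, type-$1$ and type-$r$ terms never share a $3$-cycle, and a counting argument using the fact that position $1$ contributes a type-$1$ term forces $\{b_2,b_3\}\subset\{1,j,j^2\}$ while $\{b_4,b_5,b_6\}\subset\{s,js,j^2s\}$. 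Requiring that each of the two resulting triples sum to zero, I then check by hand that $(b_2,b_3)=(j^2,j)$ is the only option and that $(b_4,b_5,b_6)$ must be a cyclic shift of $(s,j^2s,js)$. Any such cyclic shift is itself of the form $(s',j^2s',js')$ after replacing $s$ by $js$ or $j^2s$, so after this harmless redefinition row $3$ becomes exactly $(1,j^2,j,s,j^2s,js)$, as required. The main bookkeeping obstacle arises when the four cosets $\{1,j,j^2\}$, $r\{1,j,j^2\}$, $\bar s\{1,j,j^2\}$, $r\bar s\{1,j,j^2\}$ degenerate (for instance when $s\in\{1,j,j^2\}$, or when $rs^{-1}\in\{1,j,j^2\}$); these coincidences merely merge some types without obstructing the conclusion, since the target matrix satisfies every orthogonality condition for any $r,s\in\mathbb T$.
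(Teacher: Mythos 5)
Steps 1 and 2 are fine, and the overall plan --- reduce row $2$ to $(1,j,j^2,r,jr,j^2r)$ with $r\notin\{1,j,j^2\}$ and then locate the cube-root entries of row $3$ via $\langle R_2,R_3\rangle$ --- is essentially the paper's; your ``type''/coset bookkeeping is a cleaner way to organize the same case analysis that the paper carries out positionally.

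The gap is at the end of Step 3, in exactly the degenerate-coset cases you flag and then wave away. Your counting argument (``position $1$ is type $1$, so the $3$-cycle through it lies in positions $1,2,3$, hence $\{b_2,b_3\}\subset\{1,j,j^2\}$'') is valid only when the four cosets $\{1,j,j^2\}$, $r\{1,j,j^2\}$, $\bar s\{1,j,j^2\}$, $r\bar s\{1,j,j^2\}$ are pairwise distinct. If $r\bar s\in\{1,j,j^2\}$, then position $1$ and positions $4,5,6$ can contribute terms to the same coset, the cycle through position $1$ may straddle the two blocks, and the inclusion $\{b_2,b_3\}\subset\{1,j,j^2\}$ no longer follows from the count as stated; similarly, when $rs\in\{1,j,j^2\}$ the split into the triple from positions $1,2,3$ and the triple from positions $4,5,6$ need not correspond to the two $3$-cycles. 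The reason you give for dismissing these cases --- that the target matrix satisfies every orthogonality relation for all $r,s$ --- is a non sequitur: it addresses existence, not the uniqueness you are trying to establish. The conclusion does in fact survive, but a separate argument is needed; for instance when $r\bar s\in\{1,j,j^2\}$ the coset-$1$ count equals $2+2c$, with $c$ the number of cube roots among $b_2,b_3$, and since it must be a multiple of $3$ (a disjoint union of $3$-cycles) it is forced to be $6$, whence $c=2$ after all, and a similar count handles $rs\in\{1,j,j^2\}$. Supplying these checks is precisely what the paper's Cases $2$ and $3$ are devoted to; as written, this part of your Step 3 is missing.
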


\begin{proof}
By using the equivalence relation, we may assume that our matrix if of the following form, where the underlined numbers are taken up to permutations:
$$h=\begin{pmatrix}
1&1&1&1&1&1\\
1&j&j^2&r&jr&j^2r\\
1&\underline{j}&\underline{j^2}&\underline{s}&\underline{js}&\underline{j^2s}
\end{pmatrix}$$

We will use several times the procedure consisting in ``using the equivalence relation, plus rescaling the parameters'', to be reffered to as ``arrangement'' of the matrix.

These arrangements will all be done by keeping the first row of $h$ fixed. So, let us denote by $h'$ the matrix formed by the second and third rows of $h$:
$$h'=\begin{pmatrix}
1&j&j^2&r&jr&j^2r\\
1&\underline{j}&\underline{j^2}&\underline{s}&\underline{js}&\underline{j^2s}
\end{pmatrix}$$

We denote by $P$ the scalar product between the two rows of $h'$.

We have 3 cases, depending on how $\underline{j},\underline{j^2}$ are positioned with respect to $j,j^2$.

Case 1: $\underline{j},\underline{j^2}$ are below $j,j^2$. We have two cases here:

Case 1.1: $\underline{j},\underline{j^2}$ are below $j,j^2$, in order. After arrangement, the matrix is:
$$h'=\begin{pmatrix}
1&j&j^2&r&jr&j^2r\\
1&j&j^2&s&\underline{js}&\underline{j^2s}
\end{pmatrix}$$

Since $P=1+1+1+\ldots$, there is no solution here.

Case 1.2: $\underline{j},\underline{j^2}$ are below $j,j^2$, in reverse order. After arrangement, we have:
$$h'=\begin{pmatrix}
1&j&j^2&r&jr&j^2r\\
1&j^2&j&s&\underline{js}&\underline{j^2s}
\end{pmatrix}$$

The solution here is the matrix in the statement.

Case 2: one of $\underline{j},\underline{j^2}$ is below one of $j,j^2$, and the other one isn't. We have two cases:

Case 2.1: $\underline{j}$ is under $j$, or $\underline{j^2}$ is under $j^2$. In the first case, the arranged matrix is:
$$h'=\begin{pmatrix}
1&j&j^2&r&jr&j^2r\\
1&j&s&j^2&\underline{js}&\underline{j^2s}
\end{pmatrix}$$

Thus we must have $r,s\in\{1,j,j^2\}$. The other case, $\underline{j^2}$ under $j^2$, is similar.

Case 2.2: $\underline{j}$ is under $j^2$, or $\underline{j^2}$ is under $j$. By interchanging the second and the third row, we may assume that $\underline{j^2}$ is under $j$. After arrangement, the matrix is:
$$h'=\begin{pmatrix}
1&j&j^2&r&jr&j^2r\\
1&j^2&s&j&\underline{js}&\underline{j^2s}
\end{pmatrix}$$

Once again, we conclude that the 18 entries of $h$ must be in $\{1,j,j^2\}$.

Case 3: $\underline{j},\underline{j^2}$ are not under $j,j^2$. After rescaling $r,s$, we may assume that $\underline{j}$ is under $r$ and that $\underline{s}$ is under $j$, and we have two cases:

Case 3.1: under $j^2$ we have $\underline{js}$. The matrix is:
$$h'=\begin{pmatrix}
1&j&j^2&r&jr&j^2r\\
1&s&js&j&\underline{j^2}&\underline{j^2s}
\end{pmatrix}$$

By examining $P$ we conclude that we have either a particular case of the general solution in the statement, or we are in the situation $r,s\in\{1,j,j^2\}$.

Case 3.2: under $j^2$ we have $\underline{j^2s}$. The matrix is:
$$h'=\begin{pmatrix}
1&j&j^2&r&jr&j^2r\\
1&s&j^2s&j&\underline{j^2}&\underline{js}
\end{pmatrix}$$

Once again, by examining $P$ we conclude that we have either a particular case of the general solution in the statement, or we are in the situation $r,s\in\{1,j,j^2\}$.
\end{proof}

\begin{lemma}
Let $h\in M_{4\times 6}(\mathbb T)$ be a matrix having the property that each of the $6$ scalar products between its rows is of the form $x+jx+j^2x+y+jy+j^2y$, for some $x,y\in\mathbb T$. Then modulo equivalence, all $24$ entries of $h$ are in $\{1,j,j^2\}$.
\end{lemma}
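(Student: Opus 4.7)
My approach is to apply Lemma 8.1 to the first three rows of $h$ and then handle each of the two resulting cases separately by imposing the three additional 3-cycle orthogonality conditions involving the fourth row.

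By Lemma 8.1 applied to $R_1, R_2, R_3$, after some equivalence we are either in the parametric case
$$\begin{pmatrix}
1 & 1 & 1 & 1 & 1 & 1 \\
1 & j & j^2 & r & jr & j^2 r \\
1 & j^2 & j & s & j^2 s & j s
\end{pmatrix}$$
for some $r, s \in \mathbb{T}$, or rows 1--3 already have all entries in $\{1, j, j^2\}$. In either case, writing $v = (v_1, \ldots, v_6)$ for the fourth row, the requirement that $\langle R_1, v \rangle$ be a sum of two 3-cycles means the entries $\overline{v}_1, \ldots, \overline{v}_6$ form a union of two cosets of $\{1, j, j^2\}$ in $\mathbb{T}$. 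Using row-scaling on $v$ together with column permutations compatible with the shape of rows 1--3, I would normalize $v$ into a form involving at most two free unit parameters $\alpha, \beta$.

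In the parametric case, I would then impose that $\langle R_2, v \rangle$ and $\langle R_3, v \rangle$ also decompose as sums of two 3-cycles, carrying out a case split analogous to the nine-way analysis in the proof of Lemma 8.1: enumerate how the two cosets of $v$ distribute over the six columns relative to the cosets displayed by $R_2$ and $R_3$. In each subcase, the resulting conditions either force $r, s \in \{1, j, j^2\}$---collapsing the parametric form to a matrix with all entries cube roots---or are inconsistent because they would require a nontrivial short vanishing sum of cube roots. In the cube-root case for rows 1--3, I would apply Lemma 8.1 to the triple $(R_1, R_2, v)$ using only the residual equivalences (cube-root column scalings, column permutations, and arbitrary rescaling of $v$) that preserve the cube-root property of rows 1 and 2; combined with the further constraint coming from $\langle R_3, v\rangle$, this pins $v$ down to $\{1, j, j^2\}$ as well.

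The main obstacle is the parametric subcase: controlling how the two 3-cycles constituting $v$ interact simultaneously with the $R_2$-pattern $\{1, j, j^2, r, jr, j^2r\}$ and the $R_3$-pattern $\{1, j^2, j, s, j^2s, js\}$ leads to a nontrivial enumeration. The extra constraint available from having four rows rather than three should, however, make each subcase more restrictive than in Lemma 8.1, so I expect the argument to be routine but tedious, and ultimately to collapse the free parameters $r, s$ to cube roots.
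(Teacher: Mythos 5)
Your proposal takes essentially the same route as the paper's own proof of this lemma: apply Lemma 8.1 to the first three rows, observe that the fourth row (after a dephasing scaling) must be a union of two $\{1,j,j^2\}$-cosets, and then case-split on how the coset of $R_4$ containing $1$ overlaps with the column positions $2,3$ fixed by the parametric shape of $R_2, R_3$, using the scalar products $\langle R_2, R_4\rangle$ and $\langle R_3, R_4\rangle$ to force the parameters into $\{1,j,j^2\}$. Two small remarks. First, after dephasing there is only one free parameter for $R_4$ (the representative $t$ of the second coset), not ``at most two''; the coset through column $1$ is necessarily $\{1,j,j^2\}$. Second, the paper streamlines the case analysis to just three cases on the positions of $\underline{j},\underline{j^2}$ relative to columns $2,3$, rather than re-running the full nine-way enumeration of Lemma 8.1; both work, but the coarser split is faster. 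Your separate handling of the all-cube-roots alternative of Lemma 8.1 is a reasonable extra caution that the paper does not spell out (it implicitly works with the parametric display, which does cover the cube-root case since $r,s$ may themselves be cube roots), and your remark that the needed column permutations and cube-root scalings preserve the cube-root property of $R_3$ is the right way to make that reduction safe. In short: correct and structurally identical to the paper's argument, with slightly more explicit bookkeeping of the dichotomy.
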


\begin{proof}
We apply Lemma 8.1 to the first three rows, and then we multiply the fourth row by a suitable scalar, as for the matrix to become dephased. We denote by $h'$ the matrix obtained by deleting the first of 1's, which must look as follows:
$$h'=\begin{pmatrix}
1&j&j^2&r&jr&j^2r\\
1&j^2&j&s&j^2s&js\\
1&\underline{j}&\underline{j^2}&\underline{t}&\underline{jt}&\underline{j^2t}
\end{pmatrix}$$

We denote by $P_1,P_2$ the scalar products of the third row with the first two rows, and we use the same conventions as in the proof of the previous lemma.

We have three cases, depending on where $\underline{j},\underline{j^2}$ are positioned:

Case 1: $\underline{j},\underline{j^2}$ are in the second and third column. By symmetry we can assume that $\underline{j},\underline{j^2}$ appear in this order, and we can arrange the matrix as follows:
$$h'=\begin{pmatrix}
1&j&j^2&r&jr&j^2r\\
1&j^2&j&s&j^2s&js\\
1&j&j^2&t&\underline{jt}&\underline{j^2t}
\end{pmatrix}$$

We have $P_1=1+1+1+\ldots$, so there is no solution here.

Case 2: one of $\underline{j},\underline{j^2}$ is in the second or third column, and the other one isn't. By symmetry we can assume that $\underline{j}$ is in the second column, and the arranged matrix is:
$$h'=\begin{pmatrix}
1&j&j^2&r&jr&j^2r\\
1&j^2&j&s&j^2s&js\\
1&j&t&j^2&\underline{jt}&\underline{j^2t}
\end{pmatrix}$$

We have $P_1=1+1+\ldots$, so $P_1$ must be of the form $1+1+j+j+j^2+j^2$, and it follows that we have $r,t\in\{1,j,j^2\}$. In the case $t=j^2$ we get back to Case 1, and we are done. In the case $t\in\{1,j\}$ we have $P_2=1+j+j\bar{t}+\ldots$, with $j\bar{t}\neq j^2$, so the missing $j^2$ term of $P_2$ must come from a scalar product coming from one of the last three columns. But this means that we have $s\in\{1,j,j^2\}$, and we are done again.

Case 3: none of $\underline{j},\underline{j^2}$ is in the second or third column. In this case we can arrange the matrix in the following way:
$$h'=\begin{pmatrix}
1&j&j^2&r&jr&j^2r\\
1&j^2&j&s&j^2s&js\\
1&jt&j^2t&t&\underline{j}&\underline{j^2}
\end{pmatrix}$$

We have $P_1=1+\bar{t}+\bar{t}+\ldots$, so $P_1$ must be of the form $1+1+j+j+j^2+j^2$, and it follows that we have $r,t\in\{1,j,j^2\}$. In the case $t=1$ we get back to Case 1, and we are done. In the case $t\in\{j,j^2\}$ we have $P_2=1+j\bar{t}+j^2\bar{t}\ldots$, with $1\in\{j\bar{t},j^2\bar{t}\}$, so $P_2$ must be of the form $1+1+j+j+j^2+j^2$. Thus $s\in\{1,j,j^2\}$, and we are done.
\end{proof}

\begin{theorem}
The Tao matrix $T\in M_{6\times 6}(\mathbb T)$ is the only complex Hadamard matrix at $n=6$ having the property that all $15$ scalar products between its rows are of the form $x+jx+j^2x+y+jy+j^2y$, for some $x,y\in\mathbb T$.
\end{theorem}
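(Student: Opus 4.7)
The plan is to use Lemmas 8.1 and 8.2 to reduce the statement to a finite combinatorial classification of $6\times 6$ Hadamard matrices all of whose entries are cube roots of unity, and then carry out that classification.

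First, I would apply Lemma 8.1 to the first three rows of $h$. By hypothesis, the three scalar products among these rows are of the required form, so modulo equivalence we are either in the parametrized case with free $r,s\in\mathbb T$, or all $18$ entries already lie in $\{1,j,j^2\}$. Now invoking Lemma 8.2 on rows $1,2,3$ together with any one of rows $4,5,6$ --- all six scalar products involved are of the required form by hypothesis --- we conclude that $r,s$ in fact lie in $\{1,j,j^2\}$, and moreover each of rows $4,5,6$, after an appropriate individual row-rescaling, has entries in $\{1,j,j^2\}$. The crucial point here is that once Lemma 8.1 has been used to normalize rows $1,2,3$, the remaining equivalence in Lemma 8.2 acts only by scaling the fourth row, so the three applications (to the quadruples $\{1,2,3,4\}$, $\{1,2,3,5\}$ and $\{1,2,3,6\}$) are mutually compatible. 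Thus modulo equivalence, $h$ is a dephased element of $H_6(3)$.

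Next, I turn to the classification of dephased $h\in H_6(3)$. Orthogonality of each subsequent row with the first all-ones row gives a vanishing sum of six cube roots of unity, which by the elementary $s=1$ case of Theorem 7.4 decomposes as two $3$-cycles. Hence every row --- and by the same argument every column --- of $h$ contains exactly two $1$'s, two $j$'s and two $j^2$'s; in particular, the specific form required of the scalar products by the theorem becomes automatic at this stage.

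The remaining step is a finite enumeration, which I expect to be the main obstacle. Using the residual equivalence freedom --- permutations of columns $2$--$6$, permutations of rows $2$--$6$, and rescalings of these rows by elements of $\{1,j,j^2\}$ --- I would first normalize the second row to $(1,1,j,j,j^2,j^2)$, which partitions the last five columns into three pairs. Within each pair of columns the counts of $1$, $j$, $j^2$ appearing in rows $3$--$6$ are constrained, and the additional requirement that the scalar product with row $2$ split into two $3$-cycles substantially restricts the admissible column-patterns for each remaining row. I would then normalize the third row by a column permutation inside each pair and by a rescaling of row $3$, and iterate for rows $4,5,6$. The delicate point is bookkeeping: one must verify that the normalizations at each stage do not inadvertently identify genuinely distinct solutions and that, once the remaining freedom is exhausted, the Tao matrix $T$ is the unique configuration compatible with all the row- and column-orthogonality conditions. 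Once this is established, the theorem follows.
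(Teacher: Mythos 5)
Your plan follows the same two-stage structure as the paper's proof: first reduce to matrices with all entries in $\{1,j,j^2\}$ using Lemmas 8.1 and 8.2, then enumerate. On the first stage you are in fact slightly more careful than the paper, which simply asserts ``We know from Lemma 8.2 that any Hadamard matrix $h$ as in the statement must have all its entries in $\{1,j,j^2\}$.'' Your observation that the normalization of rows $1,2,3$ via Lemma 8.1 can be fixed once and for all, after which the residual equivalence needed in Lemma 8.2 only rescales the fourth row, is exactly what is needed to justify that the three applications to $\{1,2,3,k\}$ for $k=4,5,6$ are mutually consistent. This does require reading off a slightly stronger conclusion than the literal statement of Lemma 8.2 gives (namely that if rows $1,2,3$ are already in the Lemma 8.1 form, then $r,s$ are forced into $\{1,j,j^2\}$ and only a scalar on row $4$ is needed), but the paper's proof of Lemma 8.2 does establish precisely that.

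The gap is in the second stage: you describe the finite enumeration of dephased $H_6(3)$ matrices as ``the main obstacle'' and leave it as a sketch, with worries about bookkeeping and about inadvertently conflating inequivalent solutions. In fact this step is shorter than you anticipate, and the paper closes it with a simple counting argument that your sketch misses. After normalizing rows $1,2$ to $(1,\dots,1)$ and $(1,1,j,j,j^2,j^2)$, there are (up to the residual column permutations and the $j\leftrightarrow j^2$ symmetry) exactly two candidate third rows, reducible to one; having fixed rows $1,2,3$, a direct check shows there are exactly \emph{three} admissible further rows compatible with all the scalar-product constraints. Since rows $4,5,6$ must be distinct and must each be one of these three, the matrix is forced, and it is the Tao matrix. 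This ``at most three candidates, which must all occur'' observation is what makes the enumeration terminate cleanly without the iterative re-normalization and bookkeeping you were bracing for. So your proposal is correct in strategy, and the compatibility discussion is a genuine improvement in rigor over the paper's terse statement, but as written it does not contain a proof: the decisive combinatorial step is deferred rather than carried out.
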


\begin{proof}
We know from Lemma 8.2 that any Hadamard matrix $h$ as in the statement must have all its entries in $\{1,j,j^2\}$. The idea will be to reconstruct this matrix, by starting with the first 2 rows, then by adding 4 more rows, one at a time.

First, by using the equivalence relation, we can assume that the matrix $h_2\in M_{2\times 6}(\mathbb T)$ consisting of the first two rows of $h$ is as follows:
$$h_2=\begin{pmatrix}
1&1&1&1&1&1\\ 
1&1&j&j&j^2&j^2
\end{pmatrix}$$

When trying to add one more row to this matrix, under the assumption in the statement, the solutions modulo equivalence are:
$$h_3=\begin{pmatrix}
1&1&1&1&1&1\\ 
1&1&j&j&j^2&j^2\\ 
1&j&1&j^2&j^2&j
\end{pmatrix}$$
$$h_3=\begin{pmatrix}
1&1&1&1&1&1\\ 
1&1&j&j&j^2&j^2\\ 
1&j^2&j^2&j&1&j
\end{pmatrix}$$

Since the problem is symmetric in $j,j^2$, we may assume that we are in the first case. Now when trying to add a fourth row to this matrix, the solutions are:
$$h_4=\begin{pmatrix}
1&1&1&1&1&1\\ 
1&1&j&j&j^2&j^2\\ 
1&j&1&j^2&j^2&j\\
1&j&j^2&1&j&j^2
\end{pmatrix}$$
$$h_4=\begin{pmatrix}
1&1&1&1&1&1\\ 
1&1&j&j&j^2&j^2\\ 
1&j&1&j^2&j^2&j\\
1&j^2&j^2&j&1&j
\end{pmatrix}$$
$$h_4=\begin{pmatrix}
1&1&1&1&1&1\\ 
1&1&j&j&j^2&j^2\\ 
1&j&1&j^2&j^2&j\\
1&j^2&j&j^2&j&1
\end{pmatrix}$$

Let us try now to construct the full $6\times 6$ matrix. Since the same row cannot be added several times, the above three solutions for the 4-th row are in fact the solutions for the 4-th, 5-th and 6-th row, and we obtain the Tao matrix as claimed.
\end{proof}

\section{The Haagerup matrix}

In this section  we find an abstract characterization of the Haagerup matrix:
$$H^q=\begin{pmatrix}
1&1&1&1&1&1\\
1&-1&i&i&-i&-i\\ 
1&i&-1&-i&q&-q\\ 
1&i&-i&-1&-q&q\\
1&-i&\bar{q}&-\bar{q}&i&-1\\ 
1&-i&-\bar{q}&\bar{q}&-1&i
\end{pmatrix}$$

We denote by $\mathbb T$ the unit circle, and we use rectangular matrices over it, with the equivalence relation in Definition 5.3.

\begin{lemma}
Let $h\in M_{3\times 6}(\mathbb T)$ be a matrix such that each of the $3$ scalar products between its rows is of the form $x-x+y-y+z-z$. Then modulo equivalence we can assume that the first row consists of $1$'s, and the rest of the matrix is of type
$$h_1=\begin{pmatrix}
1&-i&1&i&-1&-1\\
1&-1&i&-i&q&-q
\end{pmatrix}$$
$$h_2=\begin{pmatrix}
1&1&-1&i&-1&-i\\
1&-1&q&-q&iq&-iq
\end{pmatrix}$$
$$h_3=\begin{pmatrix}
1&-1&i&-i&q&-q\\
1&-i&i&-1&-q&q
\end{pmatrix}$$
$$h_4=\begin{pmatrix}
1&-i&-1&i&q&-q\\
1&-1&-q&-iq&iq&q
\end{pmatrix}$$
for some $q\in\mathbb T$.
\end{lemma}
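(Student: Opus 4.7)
The plan is to mirror the strategy of Lemma 8.1, adapted to the present level-$4$ (two-cycle) setting. After dephasing the first row, the residual equivalences available are swapping rows $2$ and $3$, permuting the columns, and rescaling each of rows $2$ and $3$ by an element of $\mathbb T$; these must be combined carefully so as to preserve the normal form imposed on row $2$ at each step.

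First I would dephase the first row to $(1,1,1,1,1,1)$ by column rescaling. The hypothesis applied to rows $1$ and $2$ then says that the multiset of entries of row $2$ has the form $\{x,-x,y,-y,z,-z\}$ for some $x,y,z\in\mathbb T$. By permuting the columns and then rescaling row $2$ by a unit scalar, I put it in the normal form $v=(1,-1,a,-a,b,-b)$ with $a,b\in\mathbb T$. The same reasoning applied to rows $1$ and $3$ shows that the entries of $w:=$ row $3$ admit a partition into three antipodal pairs, indexed by some partition $Q=\{Q_1,Q_2,Q_3\}$ of $\{1,\ldots,6\}$ into doubletons.

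The heart of the argument is a case analysis on how $Q$ interacts with the canonical partition $P=\{\{1,2\},\{3,4\},\{5,6\}\}$ of $v$: (A) $Q=P$; (B) $Q$ shares exactly one pair with $P$; (C) $Q$ shares no pair with $P$. In each case I would write $w$ through its antipodal pairs as a specific distribution of $(c,-c,d,-d,e,-e)$ along $Q$, compute explicitly the six entries of the pointwise product $v\bar w$, and require the resulting multiset to itself be of antipodal form $\{X,-X,Y,-Y,Z,-Z\}$. This forces coincidences of the type $\bar c=\pm a\bar d$, $\bar c=\pm b\bar e$, or $a\bar d=\pm b\bar e$, often several at once. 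Case A collapses immediately (two positions cannot be paired antipodally), while Cases B and C each split into a handful of subcases, each of which forces specific relations between $c,d,e$ and $a,b$ together with constants in $\{\pm1,\pm i\}$.

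Finally I would exploit the residual equivalence --- permuting the three blocks of $v$ (thereby relabeling $(a,b)$), swapping columns within a block with compensating sign changes on $v$, and swapping rows $2$ and $3$ --- to merge equivalent subcases. The expected output is precisely the four inequivalent representatives $h_1,h_2,h_3,h_4$ listed in the statement. The main obstacle will be the bookkeeping: tracking the residual equivalence precisely enough both to identify when two a priori distinct subcases coincide, and to verify that none of the four listed forms is itself redundant up to this equivalence.
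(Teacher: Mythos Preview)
Your plan is correct and is essentially the paper's own argument: dephase the first row, normalize the second to $(1,-1,a,-a,b,-b)$, and run a case analysis on the third row driven by the binary condition on the row~2/row~3 product. The only difference is bookkeeping---the paper also scales row~3 to start with $1$ and splits cases by asking which term of the product $P$ supplies the matching $-1$, whereas you split by how the antipodal partition $Q$ of row~3 overlaps the block partition $\{\{1,2\},\{3,4\},\{5,6\}\}$; these two schemes index the same subcases and lead to the same four outputs.
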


\begin{proof}
We use the various conventions in Lemma 8.1. After assuming that the first row consists of 1's, the rest of the matrix looks as follows:
$$h'=\begin{pmatrix}
1&-1&a&-a&b&-b\\
1&\underline{-1}&\underline{x}&\underline{-x}&\underline{y}&\underline{-y}
\end{pmatrix}$$

We denote by $P$ the scalar product between the rows of $h'$. We have two cases, depending on where the missing $-1$ entry of $P$ comes from. 

Case A: assume first that the missing $-1$ entry of $P$ comes from a product involving the entries $-1$ or $\underline{-1}$. After arrangement, the matrix becomes: 
$$h'=\begin{pmatrix}
1&-1&a&-a&b&-b\\
1&1&-1&x&-1&-x
\end{pmatrix}$$

We have $P=1-1-a-a\bar{x}-b+b\bar{x}$, and the solution is of type $h_2$:
$$h'=\begin{pmatrix}
1&-1&a&-a&ia&-ia\\
1&1&-1&i&-1&-i
\end{pmatrix}$$

Case B: assume now that the missing $-1$ entry of $P$ comes from a product not involving the entries $-1$ or $\underline{-1}$. After arrangement, the matrix becomes:
$$h'=\begin{pmatrix}
1&-1&a&-a&b&-b\\
1&\underline{-1}&\underline{x}&\underline{-x}&\underline{-b}&b
\end{pmatrix}$$

We have 3 cases, depending on where $\underline{-1}$ is located:

Case 1: $\underline{-1}$ is under $-1$. The matrix becomes:
$$h'=\begin{pmatrix}
1&-1&a&-a&b&-b\\
1&-1&\underline{x}&\underline{-x}&\underline{-b}&b
\end{pmatrix}$$

Since $P$ already contains the numbers $1,1,-1$, we have several cases, depending on where the missing number $-1$ comes from, and the solution is of type $h_3$:
$$h'=\begin{pmatrix}
1&-1&a&-a&ia&-ia\\
1&-1&-ia&a&-a&ia
\end{pmatrix}$$

Case 2: $\underline{-1}$ is under $-a$. The matrix becomes: 
$$h'=\begin{pmatrix}
1&-1&a&-a&b&-b\\
1&\underline{-x}&\underline{x}&-1&\underline{-b}&b
\end{pmatrix}$$

Since $P$ already contains the numbers $1,-1,a$, we have several cases, depending on where the missing entry $-a$ comes from. After arrangement, these cases are:

Case 2.1: $-a$ comes from $\underline{-x}$ under $-1$. The solutions are of type $h_3,h_4$:
$$h'=\begin{pmatrix}
1&-1&i&-i&b&-b\\
1&-i&i&-1&-b&b
\end{pmatrix}$$
$$h'=\begin{pmatrix}
1&-1&a&-a&i&-i\\
1&\bar{a}&-i&-1&-\bar{a}&i
\end{pmatrix}$$

Case 2.2: $-a$ comes from $\underline{-b}$ under $-1$. The solution is of type $h_1$:
$$h'=\begin{pmatrix}
1&-1&i&-i&i&-i\\
1&-i&x&-1&-x&i
\end{pmatrix}$$

Case 2.3: $-a$ comes from $\underline{x}$ under $a$. The solution is of type $h_1$:
$$h'=\begin{pmatrix}
1&-1&a&-a&i&-i\\
1&-i&-1&-1&1&i
\end{pmatrix}$$

Case 2.4: $-a$ comes from $\underline{-x}$ under $-b$. The solution is of type $h_4$:
$$h'=\begin{pmatrix}
1&-1&i&-i&b&-b\\
1&-b&-ib&-1&ib&b
\end{pmatrix}$$

Case 3: $\underline{-1}$ is under $b$. The matrix becomes: 
$$h'=\begin{pmatrix}
1&-1&a&-a&b&-b\\
1&\underline{x}&\underline{-x}&\underline{-b}&-1&b
\end{pmatrix}$$

Since $P$ already contains the numbers $1,-1,-b$, we have several cases, depending on where the missing entry $b$ comes from. After arrangement, these cases are:

Case 3.1: $b$ comes from $\underline{-x}$ under $-1$. The solution is of type $h_1$:
$$h'=\begin{pmatrix}
1&-1&a&-a&i&-i\\
1&i&-i&-i&-1&i
\end{pmatrix}$$

Case 3.2: $b$ comes from $\underline{x}$ under $a$. The solutions are of type $h_4,h_3$:
$$h'=\begin{pmatrix}
1&-1&a&-a&ia&-ia\\
1&i&-i&-ia&-1&ia
\end{pmatrix}$$
$$h'=\begin{pmatrix}
1&-1&a&-a&i&-i\\
1&-i&-ia&ia&-1&i
\end{pmatrix}$$

Case 3.3: $b$ comes from $\underline{-b}$ under $a$. The solution is of type $h_1$:
$$h'=\begin{pmatrix}
1&-1&1&-1&i&-i\\
1&x&-i&-x&-1&i
\end{pmatrix}$$

Case 3.4: $b$ comes from $\underline{-b}$ under $-a$. The solution is of type $h_1$:
$$h'=\begin{pmatrix}
1&-1&-1&1&i&-i\\
1&x&-x&-i&-1&i
\end{pmatrix}$$

This finishes the proof.
\end{proof}

\begin{theorem}
The Haagerup matrix $H^q\in M_{6\times 6}(\mathbb T)$ with $q\in\mathbb T$ is the only complex Hadamard matrix at $n=6$ having the property that all $15$ scalar products between its rows are of the form $x-x+y-y+z-z$, for some $x,y,z\in\mathbb T$.
\end{theorem}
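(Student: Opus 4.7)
The plan is to imitate the strategy of Theorem 8.3, with Lemma 9.1 replacing Lemma 8.1. First, use the equivalence relation to fix the first row of $h$ to consist of $1$'s. Since the scalar product of any other row with this one is just the sum of its entries, the hypothesis forces each remaining row to consist of three opposite pairs $\pm a,\pm b,\pm c$.

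Next I would apply Lemma 9.1 to the triple (row 1, row 2, row 3): up to equivalence this forces the $2\times 6$ block formed by rows 2 and 3 to take one of the four shapes $h_1,h_2,h_3,h_4$, each carrying a parameter $q\in\mathbb T$. In each of the four starting cases I would attempt to adjoin a fourth row, invoking Lemma 9.1 both on (row 1, row 2, row 4) and on (row 1, row 3, row 4); each application yields a short list of candidates for row 4, and intersecting the two lists is strongly restrictive. I would then iterate this procedure for rows 5 and 6, cross-checking the new row against Lemma 9.1 applied to several triples involving it. A direct check shows that $H^q$ itself is a valid completion, since its rows 2 and 3 (after a column permutation and row-phase multiplication) realize one of the canonical types from Lemma 9.1; the task is to show that every branch either contradicts itself or yields a matrix equivalent to $H^q$ for some $q\in\mathbb T$.

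The primary obstacle, as with Theorem 8.3, is the combinatorial bookkeeping: each of the four starting cases branches further as rows 4, 5, 6 are added, and the continuous parameter $q$ must be tracked through each branch. Symmetries should collapse most of these branches---for instance $h_1$ and $h_2$ share the same unordered pair structure, as do $h_3$ and $h_4$, and interchanging rows 2 and 3 or applying a global phase simplifies several branches simultaneously, which keeps the case analysis manageable. A final subtle point to verify is that the parameter $q$ appearing in the eventual answer genuinely survives the equivalence relation rather than being further reducible; here one can observe that $q$ and $\bar q$ occupy diagonally opposite $2\times 2$ blocks of $H^q$, a configuration that no single column-wise phase normalization can collapse.
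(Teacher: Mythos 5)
Your plan is a plausible alternative, but it diverges substantially from the paper's actual proof and skips the key structural observation that makes the paper's argument tractable. The paper does not build the matrix row by row in the style of Theorem 8.3. Instead, it applies Lemma 9.1 to \emph{all} $3\times 6$ submatrices of $h$ at once, and deduces a global constraint: every entry of $h$ lies in the fixed eight-element set $\{\pm 1,\pm i,\pm q,\pm iq\}$ for a \emph{single} parameter $q\in\mathbb T$. From the explicit forms $h_1,\dots,h_4$ in Lemma 9.1 it then reads off that each row contains exactly $0$, $2$, or $4$ entries involving $q$, that two rows can overlap on only $0$ or $2$ of those $q$-positions, and that these constraints force a $3\times 3$ block decomposition of $h$ into $2\times 2$ cells, with the $q$-dependence confined to specific blocks. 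The final identification with $H^q$ is then a short row/column renormalization.

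The main issue with your incremental approach is parameter coherence. Lemma 9.1 applied to the triple $(1,2,3)$ produces a parameter, and Lemma 9.1 applied to $(1,2,4)$ produces \emph{a priori} a different one, in a different normalized frame; your proposal says to intersect the candidate lists, but does not explain how to argue that all these floating parameters collapse to a single $q$, nor how to reconcile the conflicting equivalences used to normalize overlapping triples. The paper's first step — entries all in $\{\pm 1,\pm i,\pm q,\pm iq\}$ — is exactly what resolves this, and it does not follow from a single application of Lemma 9.1 the way you propose; it requires the lemma to be applied across all triples simultaneously. Also, your stated plan to ``imitate Theorem 8.3'' is an imperfect analogy: that proof rests on Lemma 8.2, which pins all $24$ entries of a $4\times 6$ ternary block to the finite set $\{1,j,j^2\}$ and therefore makes the subsequent search finite. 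There is no direct counterpart to Lemma 8.2 in the binary case precisely because of the continuous parameter, which is why the paper switches strategies. Your closing remark about $q$ versus $\bar q$ surviving the equivalence is a nice sanity check but not needed for the classification statement itself.
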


\begin{proof}
Let $h$ be a matrix as in the statement, assumed to be dephased.

By applying Lemma 9.1 to all the $3\times 6$ submatrices of $h$, we deduce that all the entries of $h$ are in $\{\pm 1,\pm i,\pm q,\pm iq\}$, for some $q\in\mathbb T$. 

Moreover, from the structure of the explicit solutions in Lemma 9.1, we deduce that the rows can fall into 3 classes, depending on number of $q$'s, which can be $0,2,4$.

We also know from Lemma 9.1 that the $0,2,4$ possible $q$ parameters on different rows can overlap vertically on $0$ or $2$ positions. This leads to the conclusion that our matrix has a $3\times 3$ block decomposition, of the following form:
$$h=\begin{pmatrix}
A&B&C\\
D&xE&yF\\
G&zH&tI
\end{pmatrix}$$

Here $A,\ldots,I$ are $2\times 2$ matrices over $\{\pm 1,\pm i\}$, and $x,y,z,t$ are in $\{1,q\}$. A more careful examination shows that the solution must be of the following form:
$$h=\begin{pmatrix}
A&B&C\\
D&E&qF\\
G&qH&qI
\end{pmatrix}$$

More precisely, the matrix must be as follows:
$$h=\begin{pmatrix}
1&1&1&1&1&1\\
1&1&-i&i&-1&-1\\ 
1&i&-1&-i&-q&q\\ 
1&-i&i&-1&-iq&iq\\
1&-1&q&-iq&iq&-q\\ 
1&-1&-q&iq&q&-iq
\end{pmatrix}$$

By multiplying the rows by suitable scalars, we have:
$$h=\begin{pmatrix}
1&1&1&1&1&1\\
i&i&1&-1&-i&-i\\ 
-1&-i&1&i&q&-q\\ 
-i&-1&1&i&-q&q\\
1&-1&q&-iq&iq&-q\\ 
-1&1&q&-iq&-q&iq
\end{pmatrix}$$

By permuting the first two columns with the middle two columns, we get:
$$h=\begin{pmatrix}
1&1&1&1&1&1\\
1&-1&i&i&-i&-i\\ 
1&i&-1&-i&q&-q\\ 
1&i&-i&-1&-q&q\\
q&-iq&1&-1&iq&-q\\ 
q&-iq&-1&1&-q&iq
\end{pmatrix}$$

But this is precisely the Haagerup matrix with the last two rows multiplied by $q$, and we are done.
\end{proof}

\section{Di\c t\u a deformations}

We know from the previous sections that the Tao and Haagerup matrices $T$ and $H^q$ are uniquely determined among the $6\times 6$ complex Hadamard matrices by the nature of the $15$ scalars products between the rows. For the Tao matrix all these scalar products are of the form $x+jx+jx^2+y+jy+jy^2$, with $j=e^{2\pi i/3}$, and for the Haagerup matrix these scalar products are of the form $x-x+y-y+z-z$.

In this section we investigate the ``mixed'' case, where both types of scalar products appear. We will show that the only solutions are the Di\c t\u a deformations of $F_6$.

We have two proofs for this result, none of which is really satisfactory. The first proof is based on a number of ``reductions'' of arithmetic nature, basically asserting that: (1) in order to classify the regular matrices we can restrict attention to the regular matrices in the Butson class, and (2) in order to classify the regular Butson matrices at $n=6$ we can restrict attention to the matrices in $H_6(30)$. This latter problem can be solved by a computer, and the solutions that we found are indeed the two Di\c t\u a deformations of $F_6$. However, the arithmetic reduction part is quite delicate to justify, and the use of the program at the end is not very satisfactory. We intend to explain, refine and generalize this approach in some future systematic work on the regular matrices.

The second proof that we have is in the spirit of those given in the previous two sections, with the important difference, however, that it is much more complex. The point is that the ``mixed'' case requires a whole sequence of lemmas in the spirit of Lemma 8.1, Lemma 8.2 and Lemma 9.1, basically one for each possible configuration, from the point of view of the scalar products, of matrices having 3 or 4 rows. 

In what follows we will present the main ideas of this second proof, by skipping a number of technical details. We begin with some definitions.

\begin{definition}
Let $P=<u,v>$ be a scalar product, with $u,v\in\mathbb T^6$.
\begin{enumerate}
\item We say that $P$ is binary if it is of the form $x-x+y-y+z-z$.

\item  We say that $P$ is ternary if it is of the form $x+jx+jx^2+y+jy+jy^2$.
\end{enumerate}
\end{definition}

Assume now that we have a ``mixed'' matrix $h\in M_6(\mathbb T)$, in the sense that all $15$ scalars products between rows are binary or ternary, and that both the binary and ternary cases appear. We associate to $h$ a colored graph $X$, in the following way: $X$ is the complete 6-graph having as vertices the rows of $h$, and each edge is colored 2 or 3, depending on whether the corresponding scalar product is binary or ternary.

\begin{lemma}
Let $h\in M_6(\mathbb T)$ be a mixed matrix, having row graph $X$.
\begin{enumerate}
\item $X$ has no binary triangle.

\item $X$ has no ternary square.

\item $X$ has at least one ternary triangle.
\end{enumerate}
\end{lemma}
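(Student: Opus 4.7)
The plan is to treat (1) and (2) in parallel as rigidity arguments leaning on the classification Lemmas 9.1 and 8.2 of the previous two sections, and then to deduce (3) from (1) by an elementary Ramsey argument. Both (1) and (2) proceed by contradiction: I assume the forbidden monochromatic subgraph exists, use the appropriate classification lemma to put the submatrix of the relevant rows into a normal form, and then show every extension to a $6\times 6$ Hadamard matrix satisfying the mixedness hypothesis is impossible.

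For (1), suppose three rows $A,B,C$ form a binary triangle. After normalizing $A=(1,\ldots,1)$ by the equivalence relation, Lemma 9.1 pins the pair $(B,C)$ to one of the four explicit shapes $h_1,h_2,h_3,h_4$, in each of which the entries of $A,B,C$ lie in a ``twisted fourth-root'' set of the form $\{\pm 1,\pm i\}\cdot\{1,q\}$ for some $q\in\mathbb T$. For any further row $D$ of $h$, I would examine the three scalar products $\langle D,A\rangle$, $\langle D,B\rangle$, $\langle D,C\rangle$: if one of them (say $\langle D,A\rangle$) is ternary, the multiset $\{D_k\}$ splits as $(x,jx,j^2x,y,jy,j^2y)$, injecting a cube-root structure into $D$. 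Substituting into the orthogonality relations $\langle D,B\rangle=0$ and $\langle D,C\rangle=0$, and imposing on each the binary-or-ternary shape, a case enumeration in the spirit of the proof of Lemma 9.1 itself shows the resulting system has no solutions. The symmetric argument covers the cases where $\langle D,B\rangle$ or $\langle D,C\rangle$ is ternary. Hence every edge of the row graph from any $D$ to the triangle $\{A,B,C\}$ is binary; iterating gives the whole row graph binary, so by Theorem 9.2 the matrix is Haagerup, contradicting mixedness.

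For (2), suppose four rows $A,B,C,D$ form a ternary square (understood as $K_4$ with all six edges ternary). Lemma 8.2 forces all entries of the $4\times 6$ submatrix into $\{1,j,j^2\}$, and the inductive reconstruction at the end of the proof of Theorem 8.3 identifies these rows (up to equivalence) with four rows of the Tao matrix. Each remaining row $E$ must satisfy $\langle E,X\rangle=0$ for $X\in\{A,B,C,D\}$, with the scalar product in binary or ternary shape. Since $\overline{X_k}\in\{1,j,j^2\}$, the Lam--Leung decomposition (Theorem 7.4) applied to the multisets $\{E_k\overline{X_k}\}$ forces each $E_k$ back into $\{1,j,j^2\}$, and similarly for the sixth row. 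Then $h$ has all entries in $\{1,j,j^2\}$, so by Theorem 8.3 it is the Tao matrix, whose row graph is all-ternary — again contradicting mixedness.

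For (3), invoke the Ramsey bound $R(3,3)=6$: every edge $2$-coloring of $K_6$ contains a monochromatic triangle, and by (1) this triangle cannot be binary, so it must be ternary. The main obstacle is the case enumeration in Step 1: four families from Lemma 9.1, several sub-cases for which of the three scalar products involving $D$ is ternary, and the freedom to permute the six columns of $D$. The arithmetic incompatibility between a cube-root pattern on $D$ and the fourth-root-times-$q$ pattern on $B,C$ is what ultimately kills every sub-case, and is precisely the ``much more complex'' analysis the authors flag at the opening of Section 10.
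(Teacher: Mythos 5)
Your parts (1) and (3) are fine; part (2) has a genuine gap.

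On (3), your Ramsey argument is both correct and genuinely simpler than the paper's. The paper proceeds by contradiction (assume no ternary triangle), uses (1) to show all triangles are mixed, then classifies squares via (2), then pentagons, then hexagons, to reach an impossibility. Your one-line derivation --- a $2$-edge-colored $K_6$ contains a monochromatic triangle by $R(3,3)=6$, which by (1) cannot be binary --- bypasses all of this, and does not even invoke (2). This is a real improvement; the authors apparently did not notice that (3) follows immediately from (1) and Ramsey.

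On (1), your argument is essentially the paper's (apply Lemma 9.1 to the binary triangle, then case-analyze a fourth row having a ternary edge to it). Your explicit ``iterating'' step --- once every edge from a fourth row to $\{A,B,C\}$ is shown to be binary, propagate to conclude the whole row graph is binary, contradicting mixedness via Theorem 9.2 --- is actually a useful clarification: the paper phrases the setup as ``we may assume the 4-th row has at least one ternary product with the first three'', which tacitly presupposes such a row exists, and your propagation argument is exactly what justifies that WLOG. So (1) is sound, and arguably a little cleaner than the source.

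On (2), however, the step ``Since $\overline{X_k}\in\{1,j,j^2\}$, the Lam--Leung decomposition (Theorem~7.4) applied to the multisets $\{E_k\overline{X_k}\}$ forces each $E_k$ back into $\{1,j,j^2\}$'' does not hold. Theorem~7.4 is an existence obstruction for Butson classes, not a constraint on entries of a given row. Even reading ``Lam--Leung decomposition'' as the cycle decomposition of a vanishing sum: if $\langle E,X\rangle$ is ternary then $\{E_k\overline{X_k}\}=\{x,jx,j^2x,y,jy,j^2y\}$ for some $x,y\in\mathbb T$, and since $X_k\in\{1,j,j^2\}$ this only places $E_k$ in the set $\{x,jx,j^2x,y,jy,j^2y\}$, which is arbitrary; if $\langle E,X\rangle$ is binary, $\{E_k\overline{X_k}\}=\{a,-a,b,-b,c,-c\}$, and $E_k$ lands in the even looser set $\{a,-a,b,-b,c,-c\}\cdot\{1,j,j^2\}$. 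Nothing pins $E_k$ to cube roots of unity. (Note that if \emph{all four} edges from $E$ to the square were ternary, one could apply Lemma~8.2 directly to $\{A,B,C,E\}$ to force $E_k\in\{1,j,j^2\}$ --- but that is precisely the case that does \emph{not} help, since you must rule out a binary edge.) The paper handles (2) the same way it handles (1): by a case analysis, in the spirit of Lemma~8.2, showing that no fifth row can have a binary edge to a ternary square. To fix your proof of (2), replace the Lam--Leung step by that explicit case analysis, and then, as in your (1), propagate to conclude the whole row graph is ternary, contradicting mixedness by Theorem~8.3.
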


\begin{proof}
This result follows from the lemmas in the previous sections:

(1) Assume that $X$ has a binary triangle. By arranging the matrix, we may assume that the 3 scalar products between the first 3 rows of $h$ are binary, and that the 4-th row has at least one ternary scalar product with the first 3 rows, say with the first one. We can apply Lemma 9.1 to the matrix formed by the first 3 rows, and a case-by-case analysis shows that we cannot complete this matrix with a 4-th row as above.

(2) Assume that $X$ has a ternary square. By arranging the matrix, we may assume that the 6 scalar products between the first 4 rows of $h$ are ternary, and that the 5-th row has at least one binary scalar product with the first 4 rows, say with the first one.

We can apply Lemma 8.2 to the matrix formed by the 4 rows, and a case-by-case analysis shows that we cannot complete this matrix with a 5-th row as above.

(3) Assume that $X$ has no ternary triangle. By using (1) we conclude that all the triangles are ``mixed'', and together with (2) this shows that we have only 2 possibilities for the squares. By looking now at pentagons, we see that only one case is possible, namely the usual pentagon with edges colored 2, with the stellar pentagon formed by the diagonals with edges colored 3. Since it is impossible to complete this pentagon to a hexagon, as for all triangles to be ``mixed'', we are done.
\end{proof}

In order to start the classification, the idea would be to assume that the first three rows form a ternary triangle, to apply Lemma 8.1, that to try to complete the matrix with a 4-th row. In order to do so, we will need one more technical lemma.

\begin{lemma}
There is no mixed matrix $h\in M_{4\times 6}(\mathbb T)$ having the following properties:
\begin{enumerate}
\item The first $3$ rows have ternary scalar products between them.

\item The $4$-th row has exactly $2$ binary products with the first $3$ rows.
\end{enumerate}
\end{lemma}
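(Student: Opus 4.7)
The plan is to argue by contradiction, in the spirit of the proofs of Lemmas 8.2 and 9.1: assume such an $h$ exists, apply Lemma 8.1 to put the first three rows in canonical form, and enumerate the possible 4-th rows.

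First, by permuting the first three rows among themselves (which preserves the ternary hypothesis for their mutual products), I may assume without loss of generality that $\langle h_4,h_1\rangle$ and $\langle h_4,h_2\rangle$ are binary while $\langle h_4,h_3\rangle$ is ternary. By Lemma 8.1, modulo equivalence the first three rows take the form
\[
\begin{pmatrix}
1&1&1&1&1&1\\
1&j&j^2&r&jr&j^2r\\
1&j^2&j&s&j^2s&js
\end{pmatrix},
\]
with $r,s\in\mathbb T$ (the degenerate case with all entries in $\{1,j,j^2\}$ being a specialization of this). Writing the 4-th row as $(y_1,\dots,y_6)$, the three conditions become: (i) $y_1+\cdots+y_6=0$ splits into three pairs of opposite numbers; (ii) $y_1+j^2y_2+jy_3+\bar r(y_4+j^2y_5+jy_6)=0$ with the same pair structure; (iii) $y_1+jy_2+j^2y_3+\bar s(y_4+jy_5+j^2y_6)=0$ splits as two $3$-cycles.

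Condition (i) amounts to choosing a perfect matching on the six columns. There are $15$ such matchings, but using the residual symmetries that preserve the canonical form of rows $1$--$3$ --- namely swapping the two blocks $\{1,2,3\}$ and $\{4,5,6\}$, and performing cyclic shifts within a block (absorbed by rescaling row $2$ or row $3$ and adjusting $r,s$) --- this collapses into a short list of intra-block and inter-block matching types. For each such matching, I impose (ii): each pair $\{i,m\}$ satisfies $y_i=-y_m$ from (i) and $c_iy_i=-c_my_m$ from (ii), where $c_i$ is the $j$-weight from row $2$. Compatibility forces $c_i=c_m$, which in several cases is incompatible with $c_i\ne c_m$ built into the canonical form; these matchings are discarded immediately. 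For the surviving matchings, one obtains explicit relations among $y_1,\dots,y_6$, of the form $y_i = \epsilon_i\, j^{a_i}y_1$ with $\epsilon_i\in\{\pm1\}$ and $a_i\in\{0,1,2\}$, and a compatibility equation on $r$.

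Finally, (iii) is imposed: I read off the weighted entries in row~$3$'s frame and check whether they can be split into two $3$-cycles. Because $y_i/y_1$ lives in the sixth roots of unity, each candidate $3$-cycle must consist of three entries with weight ratios $(1,j,j^2)$, forcing sign equations of the form $\epsilon_i=\epsilon_m=\epsilon_k$ and $j$-exponent equations modulo~$3$. In every one of the finitely many surviving cases these equations are inconsistent, either outright or after eliminating the parameter $s$ --- which yields the contradiction.

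The main obstacle is the combinatorial explosion: although each individual case is an elementary check, the bookkeeping over the reduced set of matchings, the two residual cases from Lemma~8.1, and the various ways to split the six ternary-weighted entries into two $3$-cycles, is what makes this lemma one of the ``whole sequence of lemmas'' that the authors have chosen to compress, and the reason they instead point towards the alternative arithmetic approach via Butson matrices of level dividing $30$.
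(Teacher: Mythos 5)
The paper sets up the contradiction in the opposite way from you: it places the single \emph{ternary} product from the fourth row onto row $1$ (the all-ones row), so that immediately the multiset of entries of row $4$ splits into two $j$-triples and, after dephasing, the fourth row is a permutation of $(1,j,j^2,t,jt,j^2t)$ for some $t\in\mathbb T$. The two remaining conditions to check are then the binary products with rows $2$ and $3$, and the paper asserts (without spelling it out, exactly as you noticed) that the case analysis over the possible placements of $\underline{j},\underline{j^2},\underline{t},\underline{jt},\underline{j^2t}$ yields no solution. Your setup, with the ternary edge attached to row $3$, is also a legitimate normalisation, but it throws away the cleanest constraint: the ternary product with the all-ones row is the one that pins down the entries of $h_4$ up to a single parameter, whereas a binary product with the all-ones row only gives you a perfect matching, and a ternary product with row $3$ is a messier weighted condition in $1,j,j^2,\bar s,j\bar s,j^2\bar s$.

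More seriously, there is a gap in the step where you impose condition (ii). You write that for each pair $\{i,m\}$ of the matching coming from (i) one has both $y_i=-y_m$ and $c_i y_i=-c_m y_m$, ``forcing $c_i=c_m$''. But the binary decomposition of $\langle h_4,h_2\rangle$ need not use the same matching as the binary decomposition of $\langle h_4,h_1\rangle$: the six terms $c_k y_k$ may split into opposite pairs under a different partition of the columns. Assuming the two matchings coincide lets you ``discard several cases immediately'', but that discarding is unjustified, and it is exactly the kind of coincidence (different pairings for different scalar products) that a correct case analysis has to work through. If you repair this by allowing an independent matching for each binary condition, you get a double enumeration of matchings, which is more work than the paper's route via the ternary constraint on row~$1$; this is a good reason to adopt the paper's normalisation rather than yours.
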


\begin{proof}
We know from Lemma 8.1 that the matrix must look as follows:
$$h=\begin{pmatrix}
1&1&1&1&1&1\\
1&j&j^2&r&jr&j^2r\\
1&j^2&j&s&j^2s&js\\
1&\underline{j}&\underline{j^2}&\underline{t}&\underline{jt}&\underline{j^2t}
\end{pmatrix}$$

The scalar products of the fourth row with the second and third row are both binary, and an examination of all the possible cases shows that this is not possible.
\end{proof}

We are now in position of stating a key result.

\begin{proposition}
The row graph of a mixed matrix $h\in M_6(\mathbb C)$ can be:
\begin{enumerate}
\item Either the bipartite graph having $3$ binary edges.

\item Or the bipartite graph having $2$ ternary triangles.
\end{enumerate}
\end{proposition}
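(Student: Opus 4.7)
The plan is to fix a ternary triangle (which exists by Lemma 10.2(3)) on vertices $\{1,2,3\}$, and then use Lemma 10.3 as a local propagation rule for the remaining 12 edges. For each vertex $d \in \{4,5,6\}$, let $b(d)$ denote the number of binary edges from $d$ to $\{1,2,3\}$. Lemma 10.3 applied to the triangle $\{1,2,3\}$ forces $b(d) \in \{0,1,3\}$, and Lemma 10.2(2) rules out $b(d)=0$ (which would make $\{d,1,2,3\}$ a ternary $K_4$). This reduces the problem to a case analysis on the partition $\{4,5,6\} = B \sqcup T$ with $B = \{d : b(d)=3\}$ and $T = \{d : b(d)=1\}$.

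The two extremal cases produce precisely the two configurations of the statement. When $|B|=3$, all nine inter-group edges are binary, and examining each triple $\{d,d',1\}$ with $d,d' \in \{4,5,6\}$ forces (by Lemma 10.2(1)) the three edges inside $\{4,5,6\}$ to be ternary, so $\{4,5,6\}$ is a second ternary triangle: this is case (2). When $|B|=0$, each $d \in \{4,5,6\}$ has a unique binary neighbor $v_d \in \{1,2,3\}$, and one splits according to the distribution of $(v_4,v_5,v_6)$. A bijective distribution yields case (1), the edges $45, 46, 56$ being forced ternary by applying Lemma 10.3 to auxiliary ternary triangles such as $\{5,1,3\}$. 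The remaining distributions produce a ternary $K_4$: for instance, $v_4=v_5=1$ and $v_6=2$ first forces $45$ to be ternary (from $\{4,5,1\}$), and then $\{4,5,2,3\}$ is a ternary $K_4$; and $v_4=v_5=v_6=1$ similarly yields $\{4,5,6,2\}$ as a ternary $K_4$.

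The intermediate cases $|B|=1$ and $|B|=2$ should be ruled out by a second application of Lemma 10.3 to a ternary triangle built from the vertex in $T$. For $|B|=2$, write $B=\{4,5\}$ and let $v_6=1$ be the unique binary neighbor of $6$; then the triangle $\{6,2,3\}$ is ternary, and Lemma 10.3 applied to $4$ against this triangle forces $46$ to be binary (since $42,43$ are already binary), and similarly $56$ is binary, so that $\{1,4,6\}$ becomes a binary triangle in violation of Lemma 10.2(1). For $|B|=1$ a parallel argument, choosing a ternary triangle of the form $\{5,2,3\}$ after relabeling, either forces an edge like $45$ to be simultaneously binary (by Lemma 10.3) and ternary (by the no-binary-triangle rule), or produces a ternary $K_4$ such as $\{5,6,2,3\}$.

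The main obstacle is the bookkeeping in the $|B| \in \{0,1\}$ cases: the binary edges have no common vertex, so one must select the correct auxiliary ternary triangle for each application of Lemma 10.3 and then track how the forced edge colors propagate. The argument is elementary but multi-branched, and some care is needed to avoid missing a sub-case of the intermediate distributions of binary edges.
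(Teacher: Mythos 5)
Your proof is correct, and it is a more explicit and careful rendering of the idea the paper only sketches. The paper's own proof classifies induced $K_4$ subgraphs (``squares''), then appeals to pentagons and hexagons; but as literally written the claim that ``there are only two types of squares'' is inaccurate---the perfect-matching coloring $\{14,25,36\}$ contains a $K_4$ on $\{1,2,4,5\}$ with two disjoint binary edges and no ternary triangle, which is a third type not addressed by Lemmas 10.2--10.3 directly. What the paper really means is a classification of squares \emph{containing} a ternary triangle, and the pentagon/hexagon step is left entirely to the reader. Your reorganization avoids this imprecision: fixing one ternary triangle $\{1,2,3\}$ (available by Lemma 10.2(3)), using Lemma 10.3 and Lemma 10.2(2) to pin down $b(d)\in\{1,3\}$ for each $d\in\{4,5,6\}$, and then running a clean case analysis on the partition $B\sqcup T$ yields exactly the two colorings and rules out $|B|\in\{1,2\}$ and the degenerate distributions of binary neighbors. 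One small remark: in your $|B|=1$ case the hedge ``or produces a ternary $K_4$'' is unnecessary---the contradiction from $45$ being forced both binary (by Lemma 10.3 applied to $\{5,2,3\}$) and ternary (from $\{1,4,5\}$) already closes that case unconditionally, since the choice $v_5=1$ is without loss of generality by the symmetry of $\{1,2,3\}$ and the argument never uses $v_6$. Overall your version buys rigor and explicit verifiability for a modest increase in bookkeeping, whereas the paper's version is shorter but relies on the reader to reconstruct the same propagation.
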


\begin{proof}
Let $X$ be the row graph in the statement.

By using Lemma 10.2 and Lemma 10.3, we see that there are only two types of squares: (1) those having 1 binary edge and 5 ternary edges, and (2) those consisting of a ternary triangle, connected to the 4-th point with 3 binary edges.

By looking at pentagons, then hexagons that can be built with these squares, we see that the above two types of squares cannot appear at the same time, at that at the level of hexagons, we have the two solutions in the statement. 
\end{proof}

We will show now that the dichotomy produced by Proposition 10.4 corresponds in fact to the two possible Di\c t\u a deformations of $F_6$, coming from $6=2\times 3=3\times 2$.

As explained in section 6, when constructing a Di\c t\u a deformation we can always assume that the matrix of parameters has 1 on the first row and column. Thus the Di\c t\u a deformations of $F_2\otimes F_3$ are the following matrices:
$$F_{23}^{rs}
=\begin{pmatrix}1&1\\ 1&-1\end{pmatrix}
\otimes_{\begin{pmatrix}1&1\\ 1&r\\ 1&s\end{pmatrix}}
\begin{pmatrix}1&1&1\\ 1&j&j^2\\ 1&j^2&j\end{pmatrix}$$

In  Di\c t\u a product notation, this matrix is:
$$F_{23}^{rs}
=\begin{pmatrix}1&1\\ 1&-1\end{pmatrix}
\otimes\left(
\begin{pmatrix}1&1&1\\ 1&j&j^2\\ 1&j^2&j\end{pmatrix},
\begin{pmatrix}1&1&1\\ r&jr&j^2r\\ s&j^2s&js\end{pmatrix}
\right)$$

Thus we have the following formula:
$$F_{23}^{rs}
=\begin{pmatrix}
1&1&1&1&1&1\\
1&j&j^2&r&jr&j^2r\\
1&j^2&j&s&j^2s&js\\ 
1&1&1&-1&-1&-1\\
1&j&j^2&-r&-jr&-j^2r\\
1&j^2&j&-s&-j^2s&-js
\end{pmatrix}$$

As for the Di\c t\u a deformations of $F_3\otimes F_2$, these are the following matrices:
$$F_{32}^{rs}
=\begin{pmatrix}1&1&1\\ 1&j&j^2\\ 1&j^2&j\end{pmatrix}
\otimes_{\begin{pmatrix}1&1&1\\1&r&s\end{pmatrix}}
\begin{pmatrix}1&1\\ 1&-1\end{pmatrix}$$

In Di\c t\u a product notation, we have:
$$F_{32}^{rs}
=\begin{pmatrix}1&1&1\\ 1&j&j^2\\ 1&j^2&j\end{pmatrix}
\otimes\left(
\begin{pmatrix}1&1\\ 1&-1\end{pmatrix},
\begin{pmatrix}1&1\\ r&-r\end{pmatrix},
\begin{pmatrix}1&1\\ s&-s\end{pmatrix}
\right)$$

Thus we have the following formula:
$$F_{32}^{rs}
=\begin{pmatrix}
1&1&1&1&1&1\\
1&-1&r&-r&s&-s\\
1&1&j&j&j^2&j^2\\ 
1&-1&jr&-jr&j^2s&-j^2s\\
1&1&j^2&j^2&j&j\\
1&-1&j^2r&-j^2r&js&-js
\end{pmatrix}$$

Observe that, modulo equivalence, $F_{32}^{rs}$ is nothing but the transpose of $F_{23}^{rs}$. This comes in fact from a general property of Di\c t\u a deformations, not to be detailed here.

\begin{theorem}
The two Di\c t\u a deformations of $F_6$ are the unique Hadamard matrices having the property that all $15$ scalar products between rows are of the form $x-x+y-y+z-z$ or the form $r+jr+j^2r+s+js+j^2s$, with both cases appearing.
\end{theorem}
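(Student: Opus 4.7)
The plan is to use Proposition~10.4 as the starting point, which reduces the analysis to two well-defined row-graph configurations, and in each configuration to reconstruct the matrix explicitly by combining Lemma~8.1 with the constraints from the binary scalar products. The expected outcome is that each configuration matches exactly one of the two Diță deformations of $F_6$: case~(1) will yield $F_{23}^{rs}$ and case~(2) will yield $F_{32}^{rs}$.

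For case~(1), after renumbering rows we may assume the three binary edges form the perfect matching $\{1,4\}, \{2,5\}, \{3,6\}$, so each of the triples $\{1,2,3\}$ and $\{4,5,6\}$ carries a ternary triangle. Apply Lemma~8.1 to rows~$1,2,3$: either we land in the explicit form with parameters $r,s$, or all $18$ entries lie in $\{1, j, j^2\}$. The second alternative must be discarded, for otherwise the argument in the proof of Theorem~8.3 would propagate to the entire matrix and force all $36$ entries into $\{1, j, j^2\}$, contradicting the existence of any binary product. With rows~$1,2,3$ now fixed, row~$4$ is constrained by its binary product with row~$1$ (the all-ones row) and its ternary products with rows~$2,3$; a short case analysis forces row~$4 = (1,1,1,-1,-1,-1)$ up to rescaling. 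The same argument applied to rows~$5,6$ shows that they are obtained from rows~$2,3$ by negating the last three entries. The resulting matrix is precisely $F_{23}^{rs}$.

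For case~(2), after renumbering we may assume the two ternary triangles are $\{1,3,5\}$ and $\{2,4,6\}$, with all nine crossing edges binary. Apply Lemma~8.1 to each triangle separately, again discarding the $\{1,j,j^2\}$-alternative, to obtain explicit forms with parameter pairs $(r_1, s_1)$ and $(r_2, s_2)$. The nine binary constraints across the two triangles then force each column of the second triangle to be a signed copy of the corresponding column of the first, in the $x,-x$ pattern characteristic of binary products; examining how this interacts with the ternary structure inside $\{2,4,6\}$ shows that the second triangle must be a uniform sign-rescaling of the first. Propagating these sign choices through the parameter pairs yields, after rescaling and column permutation, a matrix coinciding with some $F_{32}^{rs}$ up to equivalence.

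The main obstacle will be the volume of bookkeeping, analogous to that in Lemmas~8.1, 8.2 and~9.1: every sub-configuration of $\pm$-pair placements consistent with the binary scalar products must be examined, and the $\{1,j,j^2\}$-only alternative of Lemma~8.1 must be consistently excluded in each situation. It is precisely this combinatorial explosion that motivates the authors' alternative strategy based on an arithmetic reduction to the Butson class $H_6(30)$ followed by a computer search; a fully hand-written proof along the lines above is possible but would require several additional auxiliary lemmas in the spirit of Lemma~10.3, one for each residual graph configuration.
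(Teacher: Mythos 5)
Your overall framework matches the paper's: both start from Proposition~10.4, treat the two row-graph configurations separately, and invoke Lemma~8.1 for the ternary triangles. However, there are two substantive errors in the way you propose to carry this out, and they would block the proof rather than merely add bookkeeping.

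First, the ``propagation'' argument you invoke in case~(1) to discard the $\{1,j,j^2\}$-alternative of Lemma~8.1 is unsound. The mechanism of Theorem~8.3 (via Lemma~8.2) requires all pairwise products among the rows under consideration to be ternary, which is exactly what fails once you add a fourth row: the edge $\{1,4\}$ is binary, so Lemma~8.2 cannot be applied to the $4\times 6$ block. There is therefore no step forcing the remaining rows into $\{1,j,j^2\}$, and hence no contradiction. (This turns out not to be fatal in case~(1), since the degenerate $\{1,j,j^2\}$-possibility for the top triangle is simply the specialization $r,s\in\{1,j,j^2\}$ of the parametric form and need not be excluded at all --- the paper just keeps it.)

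Second, and more seriously, your plan for case~(2) is incompatible with what the solutions actually look like. You propose to discard the $\{1,j,j^2\}$-alternative for \emph{both} ternary triangles and to work with two genuinely parametric triples $(r_1,s_1)$ and $(r_2,s_2)$, concluding with ``the second triangle must be a uniform sign-rescaling of the first.'' But inspect $F_{32}^{rs}$: the rows indexed by the first $F_2$-slot are $(1,1,1,1,1,1)$, $(1,1,j,j,j^2,j^2)$, $(1,1,j^2,j^2,j,j)$, with all entries in $\{1,j,j^2\}$ --- precisely the case you wanted to discard --- while the rows in the other $F_2$-slot carry the free parameters $r,s$ and are obtained from the former by multiplying columns by the vector $(1,-1,r,-r,s,-s)$, which is a diagonal rescaling, not a sign-rescaling. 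The paper proceeds the opposite way: it applies Lemma~8.1 to rows $1,2,3$, keeps the parameters $a,b$, and then shows that the \emph{binary} products of row $4$ with rows $2,3$ force $a,b\in\{1,j,j^2\}$ and a repeated column pattern. That derivation is the genuine content of case~(2), and a proof following your plan would run into a contradiction rather than recover $F_{32}^{rs}$. Your case~(1) conclusion (row $4=(1,1,1,-1,-1,-1)$, then $F_{23}^{rs}$) is right, and matches the paper's use of Lemma~8.1 on the triple $\{2,3,4\}$ together with orthogonality with row $1$; but case~(2) needs to be rebuilt around the observation that the binary edges force the degeneration, not the parametrization, on one side.
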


\begin{proof}
We apply Proposition 10.4, and we have two cases:

(1) Assume first that the row graph is the bipartite one with 3 binary edges. By permuting the rows, we can assume that the binary scalars products are those between rows $i$ and $i+3$. By applying Lemma 8.1 to the first three rows, and also to the second, third and fourth rows, we get that the matrix formed by the 4 first rows is of the form:
$$h_4=\begin{pmatrix}
1&1&1&1&1&1\\
1&j&j^2&r&jr&j^2r\\
1&j^2&j&s&j^2s&js\\ 
1&1&1&t&t&t
\end{pmatrix}$$

Now since the scalar product between the first and the fourth row is binary, we must have $t=-1$, so the solution is:
$$h_4=\begin{pmatrix}
1&1&1&1&1&1\\
1&j&j^2&r&jr&j^2r\\
1&j^2&j&s&j^2s&js\\ 
1&1&1&-1&-1&-1
\end{pmatrix}$$

We can use the same argument for finding the fifth and sixth row, by arranging the matrix formed by the first three rows such as the second, respectively third row consist only of 1's. This arrangement will make appear some parameters of the form $j,j^2,r,s$ in the extra row, and we obtain as unique solution the Di\c t\u a deformation $F_{23}^{rs}$. 

(2) Assume now that the row graph is the bipartite one with 2 ternary triangles. By permuting the rows, we can assume that the ternary triangles are those formed by the first three rows, and by the last three rows. Let us look now at the matrix formed by the first four rows. By using Lemma 8.1, this matrix must be of the following form:
$$h_4=\begin{pmatrix}
1&1&1&1&1&1\\
1&j&j^2&a&ja&j^2a\\
1&j^2&j&b&j^2b&jb\\ 
1&-1&\underline{r}&\underline{-r}&\underline{s}&\underline{-s}
\end{pmatrix}$$

Our assumption is that the scalar products of the fourth row with the second and third rows are binary, and a case-by-case analysis shows that we must have $a,b\in\{1,j,j^2\}$, and that the solution is of the following type: 
$$h_4=\begin{pmatrix}
1&1&1&1&1&1\\
1&1&j&j&j^2&j^2\\ 
1&1&j^2&j^2&j&j\\
1&-1&r&-r&s&-s
\end{pmatrix}$$

We can use the same argument for finding the fifth and sixth row, and we conclude that the matrix is of the following type:
$$h=\begin{pmatrix}
1&1&1&1&1&1\\
1&1&j&j&j^2&j^2\\ 
1&1&j^2&j^2&j&j\\
1&-1&r&-r&s&-s\\
1&-1&a&-a&b&-b\\
1&-1&c&-c&d&-d
\end{pmatrix}$$

Now since the last three rows must form a ternary triangle, we conclude that the matrix must be of the following form:
$$h=\begin{pmatrix}
1&1&1&1&1&1\\
1&1&j&j&j^2&j^2\\ 
1&1&j^2&j^2&j&j\\
1&-1&r&-r&s&-s\\
1&-1&jr&-jr&j^2s&-j^2s\\
1&-1&j^2r&-j^2r&js&-js
\end{pmatrix}$$

By permuting the rows we get the Di\c t\u a deformation $F_{32}^{rs}$, and we are done.
\end{proof}

\section{Classification results}

We are now in position of stating the main results in this paper. We will combine the abstract Hopf algebra results in section 6 with the Butson matrix philosophy from section 7, and with the various classification results in sections 8-10. 

We have first the following key definition.

\begin{definition}
A complex Hadamard matrix is called regular if all the scalar products between distinct rows decompose as sums of cycles.
\end{definition}

Here by ``cycle'' we mean of course cycle in a generalized sense, i.e. the sum of the $p$-roots of unity, with $p\in\mathbb N$ prime, rotated by an arbitrary scalar $a\in\mathbb T$:
$$C=ae^{2\pi i/p}+ae^{4\pi i/p}+\ldots+ae^{2(p-1)\pi i/p}$$

As mentioned in section 7, all the known examples of Butson matrices are regular, and we conjecture that the regularity condition is automatic in the Butson case.

Observe also that all the explicit matrices given in this paper are regular, except for the Bj\"orck-Fr\"oberg matrix. In fact, at $n=6$, there are several quite mysterious classes of complex Hadamard matrices, all non-regular. See \cite{ben}, \cite{szo}, \cite{tz1}.

We have the following result.

\begin{theorem}
The regular complex Hadamard matrices at $n=6$ are as follws:
\begin{enumerate}
\item Tao matrix $T$.

\item Haagerup matrix $H^q$.

\item Di\c t\u a deformations $F_{23}^{rs}$.

\item Di\c t\u a deformations $F_{32}^{rs}$.
\end{enumerate}
\end{theorem}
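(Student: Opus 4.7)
The plan is to reduce the statement to the three classification theorems already proved in Sections~8, 9, and 10 by exploiting the very restrictive arithmetic of cycle decompositions at $n=6$.

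First I would analyze which cycle decompositions of length $6$ are possible. A scalar product between two distinct rows of an Hadamard matrix in $M_6(\mathbb T)$ is a vanishing sum of $6$ unit complex numbers. By regularity, this sum splits as a disjoint union of cycles of prime length $p_1,\ldots,p_k$ with $p_1+\cdots+p_k=6$. Since each $p_i\in\{2,3,5,7,\ldots\}$, the only solutions are $6=2+2+2$ and $6=3+3$. Consequently, every one of the $15$ scalar products between distinct rows is either \emph{binary}, in the sense of Definition~10.1(1), or \emph{ternary}, in the sense of Definition~10.1(2).

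Next I would split into three exhaustive cases. If all $15$ scalar products are binary, Theorem~9.2 applies directly and yields $h\simeq H^q$ for some $q\in\mathbb T$. If all $15$ scalar products are ternary, Theorem~8.3 applies directly and yields $h\simeq T$. If both types appear (the ``mixed'' case in the sense of Section~10), Theorem~10.5 applies and yields $h\simeq F_{23}^{rs}$ or $h\simeq F_{32}^{rs}$ for some $r,s\in\mathbb T$. Together, these four possibilities exhaust the list in the statement.

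The only genuinely delicate point, and the one I would flag as the main obstacle, is ensuring the trichotomy above is truly exhaustive and compatible with the equivalence relation used in the preceding classification theorems. In particular, one must check that the hypothesis ``both types appear'' in Theorem~10.5 is automatic as soon as we are not in the purely binary or purely ternary regime; this is tautological at the level of individual scalar products but worth recording explicitly. Modulo this remark, the theorem follows by concatenating the three previous classifications, with no further computation needed.
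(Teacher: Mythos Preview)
Your proposal is correct and follows essentially the same approach as the paper: both argue that the only prime partitions of $6$ are $2+2+2$ and $3+3$, hence every scalar product is binary or ternary, and then split into the purely ternary, purely binary, and mixed cases, invoking Theorems~8.3, 9.2, and 10.5 respectively. The only difference is cosmetic---you spell out the partition argument and flag the trichotomy explicitly, whereas the paper states it in one line.
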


\begin{proof}
The equation $x_1+\ldots+x_6=0$ with $x_i\in\mathbb T$ has two types of regular solutions: those consisting of three 2-cycles, and those consisting of two 3-cycles.

(1) In case all the 15 scalar products consist of two 3-cycles, we know from Theorem 8.3 that the only solution is the Tao matrix $T$.

(2) In case all the 15 scalar products consist of three 2-cycles, we know from Theorem 9.2 that the only solution is the Haagerup matrix $H^q$.

(3) In case some of the 15 scalar products consist of two 3-cycles, and some other consist of three 2-cycles, we know from Theorem 10.5 that the only solutions are the Di\c t\u a  deformations of $F_2\otimes F_3$ and of $F_3\otimes F_2$.
\end{proof}

As a first consequence, we obtain another general result at $n=6$.

\begin{theorem}
The regular Butson matrices at $n=6$ are as follows:
\begin{enumerate}
\item Tao matrix $T$.

\item Haagerup matrix $H^q$, with $q$ root of unity.

\item Di\c t\u a deformations $F_{23}^{rs}$, with $r,s$ roots of unity.

\item Di\c t\u a deformations $F_{32}^{rs}$, with $r,s$ roots of unity.
\end{enumerate}
\end{theorem}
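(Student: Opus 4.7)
The proof is essentially a corollary of Theorem 11.2, and the plan has two ingredients: first, invoke Theorem 11.2 to reduce any regular Hadamard matrix at $n=6$ to one of four families; second, determine when each family lands in a Butson class $H_6(l)$ for some $l<\infty$.

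The first step is immediate. Since any Butson matrix is in particular a complex Hadamard matrix, a regular Butson matrix at $n=6$ is a regular complex Hadamard matrix at $n=6$, so by Theorem 11.2 it must be equivalent to one of $T$, $H^q$, $F_{23}^{rs}$, or $F_{32}^{rs}$. Note that the Butson condition is preserved by the equivalence relation of Definition 5.3, up to possibly enlarging $l$, so we may work directly with these normal forms.

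The second step is to read off the constraints on the parameters. The Tao matrix $T$ has all entries in $\{1,j,j^2\}\subset\{z\in\mathbb T\mid z^3=1\}$, so $T\in H_6(3)$ unconditionally, and this yields case (1). For the Haagerup matrix $H^q$, inspecting the explicit formula in Section 5 shows that its entries are drawn from $\{\pm 1,\pm i,\pm q,\pm\bar q\}$; the first four are always roots of unity, so $H^q$ is Butson if and only if $q$ is a root of unity, giving case (2). For the Dița deformations $F_{23}^{rs}$ and $F_{32}^{rs}$, the explicit formulas displayed just before Theorem 10.5 show that the entries are products of a power of $j$ or of $\pm 1$ with an element of $\{1,r,s\}$; consequently the matrix has all entries in a cyclotomic group if and only if both $r$ and $s$ are roots of unity, giving cases (3) and (4). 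The converse in each case is clear: if the parameters are roots of unity of orders $N_1,N_2,\ldots$, the matrix lies in $H_6(l)$ with $l=\mathrm{lcm}$ of these orders and the relevant order of $j$ or $i$.

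There is essentially no obstacle here, since the heavy lifting has already been done in Theorem 11.2 (whose proof combines Theorems 8.3, 9.2, and 10.5). The only point requiring minor care is to confirm that the equivalence operations of Definition 5.3, which are permitted when one says a regular Butson matrix ``is'' one of the listed matrices, do not destroy the Butson property in an essential way: permutations of rows and columns clearly preserve it, and the rescaling of rows and columns by unimodular scalars can always be chosen in the cyclotomic field generated by the entries, so equivalence inside the Butson world matches equivalence in $M_n(\mathbb T)$. Thus the four families above, with the parameter restrictions stated, exhaust the regular Butson matrices at $n=6$.
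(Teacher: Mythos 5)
Your proof is correct and follows the same route as the paper, which simply cites Theorem 11.2; you have in addition spelled out the (implicit) verification that each family is Butson precisely when the parameters are roots of unity, including the observation that the equivalence operations relating a Butson matrix to its normal form can be taken cyclotomic. This added detail is accurate and fills in what the paper leaves to the reader.
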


\begin{proof}
This follows from Theorem 11.2.
\end{proof}

We should mention that the regularity condition being conjecturally automatic for the Butson matrices, this type of result covers in principle all the Butson matrices. In the particular case of the above result, we can actually prove that the regularity condition is automatic at $n=6$, but the details won't be given here. The idea is that the ``tricky sum'' described in section 7 can be excluded by a computer program.

We can state now the main result in this paper.

\begin{theorem}
The quantum permutation algebras associated to the regular Hadamard matrices at $n\leq 6$ are as follows:
\begin{enumerate}
\item The algebras $C(\mathbb Z_2)$, $C(\mathbb Z_3)$, $C(\mathbb Z_5)$.

\item Quotients of $C(\mathbb Z_2)*_wC(\mathbb Z_2)$.

\item Quotients of $C(S_3)*_wC(\mathbb Z_2)$.

\item Quotients of $C(\mathbb Z_2)*_wC(S_3)$.

\item The algebras associated to $T$, $H^q$.
\end{enumerate}
\end{theorem}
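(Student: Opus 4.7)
The strategy is to enumerate the regular Hadamard matrices for each $n\leq 6$ and read off the associated algebras from the tools developed in Sections~2--6. At $n=2,3,5$, Haagerup's theorem (Theorem~5.4) identifies the Fourier matrix $F_n$ as the unique matrix up to equivalence; each $F_n$ is manifestly regular, and Proposition~6.3(1) yields $A=C(\mathbb{Z}_n)$, accounting for item~(1). At $n=4$, the only family is $F_{22}^q$, which is regular since every scalar product between two rows is a sum of two $2$-cycles. By Proposition~6.7 we can write $F_{22}^q=F_2\otimes_l F_2$ as a Di\c t\u a deformation, so Theorem~6.8(2) realizes the associated algebra as a quotient of $C(\mathbb{Z}_2)*_w C(\mathbb{Z}_2)$, giving item~(2).

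At $n=6$, Theorem~11.2 provides the four families $T$, $H^q$, $F_{23}^{rs}$, $F_{32}^{rs}$. For $F_{23}^{rs}=F_2\otimes_l F_3$, Theorem~6.8(2) with $h=F_2$ and $k=F_3$ bounds $A$ as a quotient of $C(\mathbb{Z}_3)*_w C(\mathbb{Z}_2)$. The canonical surjection $C(S_3)\to C(\mathbb{Z}_3)$ arising from $\mathbb{Z}_3\subset S_3$ induces, by functoriality of $*_w$ in the second slot under surjective morphisms, a surjection $C(S_3)*_w C(\mathbb{Z}_2)\to C(\mathbb{Z}_3)*_w C(\mathbb{Z}_2)$; composing, $A$ is then a quotient of $C(S_3)*_w C(\mathbb{Z}_2)$, establishing item~(3). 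The identical argument applied to $F_{32}^{rs}=F_3\otimes_l F_2$ yields item~(4). The matrices $T$ and $H^q$ constitute item~(5) by definition and require no further identification.

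The main subtlety here is conceptual rather than computational. The heavy lifting on the Hadamard side is imported from Theorem~11.2, and on the Hopf image side from Theorem~6.8; what remains is to verify that $*_w$ is functorial under surjections, which is immediate from the presentation $A*_w B=(A^{*\dim v}*B)/\langle[u_{ij}^{(a)},v_{ab}]=0\rangle$, and to recognize each Di\c t\u a family for what it is. The mild weakening from $C(\mathbb{Z}_3)$ to $C(S_3)$ in items~(3)--(4) reflects a deliberate choice: the $S_3$ free wreath product is the natural ambient object, and pinning down the precise quotient at each parameter $(r,s)$ is a deeper question not addressed by the statement.
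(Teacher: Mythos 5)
Your argument is correct, and it follows essentially the same route as the paper's one-line proof (which cites Theorems 1.6, 5.4, 6.8 and 11.2 and leaves the assembly to the reader). The enumeration by $n$, the appeal to Haagerup's classification at $n\leq 5$, the recognition of $F_{22}^q$, $F_{23}^{rs}$, $F_{32}^{rs}$ as Di\c t\u a deformations, and the use of Theorem 6.8(2) are all exactly the intended ingredients.

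One point worth flagging: applying Theorem 6.8(2) directly to $F_{23}^{rs}=F_2\otimes_l F_3$ gives the sharper bound ``quotient of $C(\mathbb Z_3)*_w C(\mathbb Z_2)$'', and analogously ``quotient of $C(\mathbb Z_2)*_w C(\mathbb Z_3)$'' for $F_{32}^{rs}$; the theorem's statement uses the larger envelope $C(S_3)$, which is the natural ambient quantum permutation algebra on $3$ points. Your step of relaxing the sharper bound to the stated one via the surjection $C(S_3)\to C(\mathbb Z_3)$ and the functoriality of $*_w$ under surjections is correct, though note it is the \emph{first} slot of $A*_w B=(A^{*\dim v}*B)/\langle[u_{ij}^{(a)},v_{ab}]\rangle$ that is being mapped, not the second as you wrote. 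In effect you establish a slightly stronger version of items (3) and (4) and then weaken it to match the statement; this buys nothing here but is perfectly sound.
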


\begin{proof}
This follows indeed by combining the various results in Theorem 1.6, Theorem 5.4, Theorem 6.8 and Theorem 11.2.
\end{proof}

As a first comment, the algebras in (2) are explicitely computed in \cite{bni}. They all appear as twists of group algebras of type $C^*(\Gamma)$, with $\Gamma$ quotient of $D_\infty$.

In principle the algebras in (3,4) can be investigated by using similar methods. The main problem here is the computation of the generic algebra, and this is in relation with the general question formulated at the end of section 6.

Regarding now the algebras in (5), these rather seem to be of ``exceptional'' nature. This is particularly true for the algebra associated to the Tao matrix $T$, which is known to be isolated \cite{tz2}. The algebra associated to $H^q$, however, has a different status, because the matrices $H^q$ form an affine family in the sense of \cite{tz2}.

\begin{problem}
What is the Hopf algebra associated to the Haagerup matrix $H^q$, for generic values of the parameter?
\end{problem}

The point here is that a systematic investigation of the affine regular case seems to be a key problem. At $n=7$ indeed we have the Petrescu matrix $P^q$, where the computation of the generic algebra corresponds to a well-known problem in subfactor theory, of potential interest in connection with several questions raised by \cite{bnp}, \cite{jo2}.

\section{Concluding remarks}

We have seen in this paper that the Hopf image approach to the quantum permutation algebras leads to a natural hierarchy of the various ``magic-type'' objects associated to the Hilbert spaces. This hierarchy, while constructed quite abstracly, turns to have the Hadamard matrices at its core, and is therefore in tune with some key problems in combinatorics and quantum physics. Moreover, the representation theory invariants of the Hopf algebra themselves correspond to some subtle subfactor invariants, coming from the work of Jones \cite{jo2} and Popa \cite{po2}, and from this point of view, our hierarchy is once again compatible with some key problems in subfactor theory, notably with the computation of quantum invariants of the Petrescu matrix \cite{pet}.

In view of a further development of this approach, a number of explicit questions were raised in the previous sections. Probably the most important one is the question about the generic algebra for the Di\c t\u a deformations. This question belongs to the general representation theory problematics for the free wreath products, and the conclusion here is that the conjectural statements in \cite{bb1} would have not only to be proved, but also to be substantially refined. There seems to be a lot of work to be done here, and we intend to come back to these questions in some future work.

Finally, let us mention that what is also missing to our quantum permutation group approach to the complex Hadamard matrices are some tools coming from classical analysis. As explained in \cite{bc1}, \cite{bc2}, some fruitful connections with Voiculescu's free probability \cite{vdn}, and with analysis in general, can be found via Weingarten functions, so the main problem is to understand these functions in the general context of Hopf images. Once again, we intend to come back to these questions in some future work.

\end{document}